\newcommand{\widesim}[2][-1.5]{
   \mathrel{\overset{#2}{\scalebox{#1}[1]{$\sim$}}}
}
\newtheorem{theorem}{Theorem}[section]
\newtheorem{proposition}[theorem]{Proposition}
\newtheorem{lemma}[theorem]{Lemma}
\theoremstyle{definition}
\newtheorem{definition}[theorem]{Definition}
\newtheorem{example}[theorem]{Example}
\newtheorem{remark}[theorem]{Remark}
\newcommand{\A}{\mathcal{A}}
\begin{document}

\title[]{Palettes of Dehn colorings for spatial graphs and the classification of vertex conditions}


\author[K.~Oshiro]{Kanako Oshiro}
\address{Department of Information and Communication Sciences, Sophia University, Tokyo 102-8554, Japan}
\email{oshirok@sophia.ac.jp}

\author[N.~Oyamaguchi]{Natsumi Oyamaguchi}
\address{Department of Teacher Education, Shumei University, Chiba 276-0003, Japan}
\email{p-oyamaguchi@mailg.shumei-u.ac.jp}

\maketitle

\begin{abstract}
In this paper, we study Dehn colorings of spatial graph diagrams, and classify the vertex conditions, equivalently the {\it palettes}. We give some example of spatial graphs which can be distinguished by the number of Dehn colorings with selecting an appropriate palette. Furthermore, we also discuss the generalized version of palettes, which is defined for knot-theoretic ternary-quasigroups and region colorings of spatial graph diagrams.
\end{abstract}

\keywords{spatial graph, region coloring, Dehn coloring, palette}

\section{Introduction}
Throughout this paper, $\mathbb Z_+$ means the set of positive integers and $\mathbb Z_{\geq q}$ means the set of integers greater than or equal to some number $q$. Let $p\in \mathbb Z_{\geq 2}$, and let $\mathbb Z_p=\{0,1, \ldots, p-1\}$ denote the cyclic group $\mathbb Z/p\mathbb Z$.

Fox colorings, that is arc colorings by $\mathbb Z_p$ with some condition, were invented by Fox \cite{Fox} and have studied in knot theory, see \cite{HararyKauffman, Przytycki95, Satoh07} for example. 
The number of Fox $p$-coloring is a classical link invariant.  Ishii and Yasuhara applied it for spatial graphs and considered two kinds of coloring conditions for vertices in \cite{IshiiYasuhara97}.
After that, the first author introduced \textit{palettes}\footnote{We use spelling ``palette" in this paper, while the first author used spelling ``pallet" in \cite{Oshiro12}.} of Fox colorings, each of which gives a coloring condition for vertices of spatial graph diagrams, in \cite{Oshiro12}. She completely classified the vertex conditions for Fox colorings of spatial graph diagrams.

Dehn colorings, that is region colorings by $\mathbb Z_p$ with some condition, for classical link diagrams have been also studied in knot theory, see \cite{CarterSilverWilliams13, MadausNewmanRussell17, Nelson, Niebrzydowski0, Niebrzydowski1, Niebrzydowski2, Oshiro18} for example. 
In \cite{CarterSilverWilliams13}, some relation between Fox colorings and Dehn colorings was established. We studied Dehn colorings for spatial graph diagrams, and showed that some invariants $\tau_p, \varepsilon_p, \mu_p$, and $\mu_{p, \tau}$ for an equivalence relation on $\displaystyle 
\bigcup_{n \in 2\mathbb{Z}_{+}} \mathbb Z_p^n$ give a spatial graph invariant called a {\it vertex-weight invariant} in our previous paper \cite{OshiroOyamaguchi20}.

In this paper, we make use of these four invariants $\tau_p, \varepsilon_p, \mu_p$, and $\mu_{p, \tau}$ to classify the vertex conditions for Dehn colorings of spatial graph diagrams. We introduce palettes for Dehn colorings of spatial graph diagrams, where a palette gives a vertex condition, and where in this paper we call palettes of region colorings ``{\it $\mathcal{R}$-palettes}" and those of arc colorings ``{\it $\mathcal{A}$-palettes}".
Then, we give an example of spatial graphs which can be distinguished by selecting an appropriate $\mathcal{R}$-palette.
 Furthermore, we also discuss a generalization of Dehn colorings and $\mathcal{R}$-palettes, which is given for knot-theoretic ternary-quasigroups and region colorings of ``oriented" (or ``unoriented") spatial graph diagrams.

This paper is organized as follows: in Section 1, we introduce $\mathcal{R}$-palettes for Dehn $p$-colorings and an equivalence relation on $\bigcup_{n\in 2\mathbb Z_+} \mathbb Z^n_p$. In Section 2, some invariants under the equivalence relation on $\bigcup_{n\in 2\mathbb Z_+} \mathbb Z^n_p$ are reviewed. 
By using them, we state our main theorem, which gives the classification of vertex conditions for Dehn $p$-colorings. 
In Section 3, an $\mathcal{A}$-palette of Fox $p$-colorings is reviewed. We give the proof of the main theorem in Section 4. 
In Section 5, we define a Dehn $(p, P)$-coloring of an unoriented spatial graph diagram with a palette $P$ and show that the number of Dehn $(p, P)$-colorings is an invariant of unoriented spatial graphs. Moreover, we give some example of spatial graphs which can be distinguished by the number of Dehn $(p, P)$-colorings with selecting an appropriate palette. In Section 6 and Section 7, we show that the notion of $\mathcal{R}$-palettes for Dehn $p$-colorings can be extended for knot-theoretic ternary-quasigroups and region colorings of ``oriented" or ``unoriented" spatial graph diagrams in general.

\section{An $\mathcal{R}$-palette for Dehn colorings}

We first give a definition of an $\mathcal{R}$-palette for Dehn $p$-colorings.

\begin{definition}\label{def:DihedralRpalettes}
An {\it $\mathcal{R}$-palette} $P$ for  Dehn $p$-colorings is a subset of $\displaystyle 
\bigcup_{n \in 2\mathbb{Z}_{+}} \mathbb Z_p^n$ satisfying the following conditions:

\begin{itemize}
\item[(i)]  If $(a_1, a_2, \ldots, a_n) \in P$, then $(a_2, \ldots, a_n, a_1) \in P$.
\item[(ii)]  If $(a_1, a_2,\ldots, a_n) \in P$, then
$$\Big(a, a_2+(-1)^2(a_1-a), \ldots , a_i+(-1)^i(a_1-a), \ldots , a_{n}+(-1)^n(a_1-a)\Big) \in P$$ for any $a \in \mathbb Z_p$.
\item[(iii)]  If $(a_1, a_2,\ldots, a_n) \in P$, then
$$\Big(a, a_1-a_2+a, \ldots , a_1-a_i+a, \ldots ,a_1-a_n+a\Big) \in P$$ for any $a \in \mathbb Z_p$.

\item[(iv)]  If $(a_1, a_2, a_3, \ldots, a_n) \in P$ and $n > 2$, then
$$(a_1, -a_1+a_2+a_3, a_3, \ldots, a_n) \in P.$$ 
\end{itemize}
\end{definition}

\begin{definition}\label{def:Requiv}
Two elements $\displaystyle \boldsymbol{a}, \boldsymbol{b} \in \bigcup_{n\in 2\mathbb Z_+} \mathbb Z^n_p$ are {\it equivalent} ($\boldsymbol{a} \sim \boldsymbol{b}$) if $\boldsymbol{a}$ and $ \boldsymbol{b}$  are related by a finite sequence of the following transformations:
\begin{itemize}
\item[(Op1)] $(a_1,  \ldots , a_n) \longrightarrow (a_2, \ldots , a_n, a_1 )$,
\item[(Op2)] $(a_1,  \ldots , a_n) \longrightarrow (a, a_2+(-1)^2(a_1-a), \ldots , a_i +(-1)^i (a_1-a), \ldots ,  a_n +(-1)^n(a_1-a) )$ for $a\in \mathbb Z_p$,
\item[(Op3)] $(a_1,  \ldots , a_n) \longrightarrow (a, a_1-a_2 + a, \ldots ,  a_1-a_i + a, \ldots ,  a_1 -a_n+a )$ for $a\in \mathbb Z_p$,
\item[(Op4)]  $(a_1,  \ldots , a_n) \longrightarrow (a_1, -a_1 + a_2 + a_3, a_3 , \ldots , a_n )$.
\end{itemize}
\end{definition}

We note that the equivalence relation $\sim$ is closed in each $\mathbb Z_p^n$ as mentioned in Lemma~\ref{lem:dihedralpalette}.

\begin{remark}
The inverse of (Op1) (resp. (i) of Definition~\ref{def:DihedralRpalettes}) is (Op1)${}^{n-1}$ (resp. (i)${}^{n-1}$). The inverse of (Op2) (resp. (ii) of Definition~\ref{def:DihedralRpalettes}) for $c\in \mathbb Z_p$ is (Op2) (resp. (ii) of Definition~\ref{def:DihedralRpalettes}) for $a_1 \in \mathbb Z_p$. The inverse of (Op3) (resp. (iii) of Definition~\ref{def:DihedralRpalettes}) for $c\in \mathbb Z_p$ is (Op3) (resp. (iii) of Definition~\ref{def:DihedralRpalettes}) for $a_1 \in \mathbb Z_p$. The inverse of (Op4) (resp. (iv) of Definition~\ref{def:DihedralRpalettes}) is (Op4)${}^{p-1}$ (resp. (iv)${}^{p-1}$ of Definition~\ref{def:DihedralRpalettes}). 
\end{remark}

\begin{example}\label{universal}
For any $n \in 2\mathbb Z_+$, let $U_{p,n}=\mathbb Z^n_p $ and let $\displaystyle U_{p}=\bigcup_{n \in  2{\mathbb Z_{+}}}U_{p,n}$, both of which are $\mathcal{R}$-palettes for Dehn $p$-colorings. 
We call $\displaystyle U_{p,n}$ the {\it universal $\mathcal{R}$-palette of length $n$} and $\displaystyle U_{p}$ the {\it universal $\mathcal{R}$-palette}.
\end{example}

\begin{example}\label{alternating}
For any $n \in 2\mathbb Z_+$, let 
\[A_{p,n}=\left\{ (a_1,\ldots,a_n) \in U_{p,n} ~ {\Bigg |} ~
\begin{array}{l}
  a_i=a_1 \textrm{ if $i$ is odd}, \\
 a_i=a_2 \textrm{ if $i$ is even} \end{array}
  \right\}
  \]
and let $\displaystyle A_{p}=\bigcup_{n \in  2{\mathbb Z_{+}}}A_{p,n}$, both of which are $\mathcal{R}$-palettes for Dehn $p$-colorings. We call $\displaystyle A_{p,n}$ the {\it alternating $\mathcal{R}$-palette of length $n$} and $\displaystyle A_{p}$ the {\it alternating $\mathcal{R}$-palette}.
\end{example}

In this paper, we denote by $\bigsqcup$ the union of disjoint sets. Clearly we have the following lemma:
\begin{lemma}\label{lem:dihedralpalette}
\begin{itemize}
\item[(1)]  The equivalence relation $\sim$ is closed under each $U_{p,n}$, that is, it holds that  $\displaystyle U_p / \sim ~ = \bigsqcup_{n\in 2\mathbb Z_+} \big( U_{p,n}/ \sim \big)$.
\item[(2)] A subset $\displaystyle P \subset U_p$ is an $\mathcal{R}$-palette for Dehn $p$-colorings if and only if 
$\displaystyle P= \bigcup_{\lambda\in \Lambda} C_\lambda$ for some set $\displaystyle \{ C_\lambda \}_{\lambda \in \Lambda} \subset  U_p/ \sim $.
\end{itemize}
\end{lemma}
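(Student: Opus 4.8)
My plan is to dispose of part (1) first, since it is essentially immediate, and then build part (2) on top of it. For part (1), the key observation is that every one of the four generating transformations (Op1)--(Op4) sends a tuple of length $n$ to another tuple of length $n$: (Op1) is a cyclic permutation, while (Op2), (Op3), (Op4) replace each entry by an affine combination of entries without changing the arity. Hence if $\boldsymbol a \sim \boldsymbol b$ then $\boldsymbol a$ and $\boldsymbol b$ lie in the same $\mathbb Z_p^n$, so no equivalence class meets two different strata $U_{p,n}$ and $U_{p,m}$ with $n \neq m$. Since $U_p = \bigcup_{n \in 2\mathbb Z_+} U_{p,n}$ is by definition the disjoint union of the strata, the quotient decomposes as the disjoint union $\bigsqcup_{n} (U_{p,n}/\!\sim)$, which is exactly the assertion of (1). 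This also justifies the remark preceding the lemma that $\sim$ is closed in each $\mathbb Z_p^n$.

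For part (2), I would prove the two implications separately. First suppose $P$ is an $\mathcal R$-palette, i.e. $P$ satisfies conditions (i)--(iv) of Definition~\ref{def:DihedralRpalettes}. I claim $P$ is a union of $\sim$-classes. Indeed, conditions (i)--(iv) say precisely that $P$ is closed under each of the forward transformations (Op1)--(Op4); by the Remark just after Definition~\ref{def:Requiv}, each transformation's inverse is again a composite of forward transformations of the same type (e.g. the inverse of (Op1) is (Op1)$^{n-1}$, the inverse of (Op4) is (Op4)$^{p-1}$, and the inverses of (Op2), (Op3) are again (Op2), (Op3) with a different parameter), so $P$ is in fact closed under all of (Op1)--(Op4) \emph{and their inverses}. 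Therefore if $\boldsymbol a \in P$ and $\boldsymbol a \sim \boldsymbol b$, then $\boldsymbol b$ is obtained from $\boldsymbol a$ by a finite sequence of these (invertible) moves, so $\boldsymbol b \in P$; thus $P$ is saturated for $\sim$, i.e. a union of equivalence classes, which gives the ``only if'' direction with $\Lambda$ indexing the classes contained in $P$.

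Conversely, suppose $P = \bigcup_{\lambda \in \Lambda} C_\lambda$ for some collection of $\sim$-classes $C_\lambda$. I must check $P$ satisfies (i)--(iv). Fix $\boldsymbol a = (a_1,\ldots,a_n) \in P$; then $\boldsymbol a \in C_\lambda$ for some $\lambda$. Each of (i)--(iv) applied to $\boldsymbol a$ produces a tuple $\boldsymbol b$ which is exactly the image of $\boldsymbol a$ under the corresponding transformation (Op1)--(Op4) (note (iv) requires $n > 2$, matching the hypothesis under which (Op4) is meaningfully applied), hence $\boldsymbol a \sim \boldsymbol b$, hence $\boldsymbol b \in C_\lambda \subset P$. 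This verifies all four closure conditions, giving the ``if'' direction. I do not anticipate a genuine obstacle here: the content of the lemma is really just the bookkeeping identity ``closed under a generating set of an equivalence relation and its inverses $\iff$ saturated,'' combined with matching the list (i)--(iv) against (Op1)--(Op4) line by line; the only mild care needed is tracking the parameters in the inverse moves (already recorded in the Remark) and the $n > 2$ side condition for (iv).
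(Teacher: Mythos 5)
Your proof is correct, and it supplies exactly the routine verification that the paper omits (the lemma is stated there with ``Clearly we have the following lemma'' and no proof): length-preservation of (Op1)--(Op4) gives (1), and the line-by-line match between (i)--(iv) and (Op1)--(Op4), together with the Remark expressing each inverse move as a composite of forward moves, gives the saturation equivalence in (2).
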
 

\noindent Lemma~\ref{lem:dihedralpalette} implies that for classifying the $\mathcal{R}$-palettes, it suffices to classify the equivalence classes of $U_{p,n}$ with respect to the relation $\sim$. 
That is, it is important to know  $U_{p,n}/\sim $, which is given in Theorem~\ref{main}.

\section{Main theorem}
For any positive integers $a$ and $b$, $a|b$ means that $a$ is a divisor of $b$.

Put $\bm{a}=(a_1, \ldots ,a_n)$. Note that for the definition of $\tau_p, \varepsilon_p, \mu_p$, and $\mu_{p, \tau}$, we treat $a_i \in \mathbb Z_p=\{0,1, \ldots, p-1\}$ for $i \in \{1,\ldots, n\}$ as an integer.
We define $\displaystyle \tau_p: U_p \longrightarrow \mathbb{Z}$ by
\[
\tau_p \big( \bm{a} \big)=\max\left\{ k \in \{1,\ldots,p\} ~ {\Bigg |} ~
\begin{array}{l}
 \ k|p, \\
 a_1+a_2 \equiv a_2+a_3 \equiv \cdots \equiv a_n+a_1 \pmod{k}
 \end{array}
  \right\}.
  \]

Suppose that $p$ is an even integer. We define
$\displaystyle \varepsilon_p: U_p \longrightarrow \mathbb{Z} \cup \{\infty\}$ by
\[
\varepsilon_p \big( \bm{a}\big)=
\begin{cases}
0 & \mbox{if } a_1+a_2 \equiv \cdots \equiv a_n+a_1 \equiv 0  \pmod{2},\\
1 & \mbox{if } a_1+a_2 \equiv  \cdots \equiv a_n+a_1 \equiv 1  \pmod{2},\\
\infty & {\rm otherwise.}
\end{cases}
\]

\

We define
$\displaystyle \mu_p: U_p \longrightarrow \mathbb{Z}$ by
\[
\mu_p \big( \bm{a} \big)=E\big( (a_1+a_2, \ldots, a_n+a_1) \big)-O\big(  (a_1+a_2, \ldots ,a_n+a_1)\big),
\]
where 
\[E\big( \bm{a}\big)=\# \{ i \in \{1,\ldots, n\} \mid a_i \equiv 0 \pmod{2}\}\]
 and
\[O\big( \bm{a} \big)=\# \{ i \in \{1,\ldots, n\} \mid a_i \equiv 1 \pmod{2}\}.\]

For $\tau \in \{1, \ldots ,p\}$ such that $\tau \equiv 0 \pmod{2}$, $\tau|p$ and $\displaystyle \frac{p}{\tau} \equiv 0 \pmod{2}$, define $\displaystyle \mu_{p, \tau}: U_p \longrightarrow \mathbb{Z} \cup\{\infty\}$ by
\[
\mu_{p,\tau}\big( \bm{a} \big)
=
\begin{cases}\displaystyle
~ {\Bigg |} ~
\mu_{\frac{p}{\tau}} \Bigg( \Big( \frac{a_{1}-a_1}{\tau},\frac{a_{2}-a_2}{\tau}, \ldots , \frac{a_{2j-1}-a_1}{\tau},\frac{a_{2j}-a_2}{\tau}, \ldots , \frac{a_n-a_2}{\tau} \Big) \Bigg)
~ {\Bigg |}  \\
\hspace{85mm} \mbox{if } \tau_p(\bm{a})=\tau,\\
 \hspace{10mm} \infty  \hspace{75mm} {\rm otherwise.}
\end{cases}
\]
\begin{lemma}[\cite{OshiroOyamaguchi20}]
$\tau_p, \varepsilon_p, \mu_p$, and $\mu_{p, \tau}$ are invariants under the relation $\sim$.
\end{lemma}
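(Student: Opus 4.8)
The plan is to use that $\sim$ is generated by the moves (Op1)--(Op4), so it suffices to check that each of $\tau_p,\varepsilon_p,\mu_p,\mu_{p,\tau}$ is unchanged under a single application of each of the four moves. The organising idea is that all four quantities are determined by the \emph{edge-sum vector} $s(\boldsymbol{a})=(a_1+a_2,a_2+a_3,\dots,a_n+a_1)$ --- immediately from the formulas for $\tau_p,\varepsilon_p,\mu_p$, and for $\mu_{p,\tau}$ after the small computation in the last paragraph --- so the first step is to record how $s(\boldsymbol{a})$ transforms. A short calculation (the (Op2) case uses that $n$ is even) shows: (Op1) cyclically permutes the coordinates of $s(\boldsymbol{a})$; (Op2) fixes $s(\boldsymbol{a})$ entirely, since in each $a_i+a_{i+1}$ the corrections $(-1)^i(a_1-a)$ and $(-1)^{i+1}(a_1-a)$ cancel; (Op3) sends every $s_i$ to $2(a_1+a)-s_i$, reflecting each coordinate through the \emph{even} number $2(a_1+a)$; and (Op4), applicable only when $n>2$, replaces the pair $(s_1,s_2)$ by $(s_2,\,2s_2-s_1)$ while fixing $s_3,\dots,s_n$.

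From this the invariance of $\tau_p,\varepsilon_p,\mu_p$ is quick. For $\tau_p$: for each divisor $k$ of $p$, the condition $s_1\equiv\cdots\equiv s_n\pmod k$ is preserved by a cyclic permutation, by the identity, by $s_i\mapsto 2(a_1+a)-s_i$, and by $(s_1,s_2)\mapsto(s_2,2s_2-s_1)$ --- the last because $s_1\equiv s_2\pmod k$ is equivalent to $2s_2-s_1\equiv s_2\pmod k$ --- so the largest admissible $k$ is unchanged. For $\varepsilon_p$ and $\mu_p$: both are functions of the multiset of parities of $s_1,\dots,s_n$, and each of the four transformations preserves that multiset --- (Op1) and (Op2) clearly, $s_i\mapsto 2(a_1+a)-s_i$ because $2(a_1+a)$ is even, and (Op4) because $2s_2-s_1\equiv s_1\pmod 2$. (One uses here that $p$, and below $p/\tau$, are even, so that the parity of an element of $\mathbb Z_p$ is well defined after reduction mod $p$.)

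The real work is $\mu_{p,\tau}$, and I expect this to be the only genuine obstacle: the rescaling in its definition singles out $(a_1,a_2)$ as a basepoint, which the moves (Op1) and (Op4) move around, and one must see that the absolute value absorbs the resulting sign ambiguity. Since $\tau_p$ is already invariant, the ``$\infty$'' branch needs no attention, so assume $\tau_p(\boldsymbol{a})=\tau$. Then $\tau_p(\boldsymbol{a})=\tau$ forces $a_{2j-1}\equiv a_1$ and $a_{2j}\equiv a_2\pmod\tau$, and a direct computation identifies the edge-sum vector of the rescaled tuple with the vector $t$ having $t_i=(s_i-s_1)/\tau$; in particular $t_1=0$ and $\mu_{p,\tau}(\boldsymbol{a})=\lvert E(t)-O(t)\rvert$. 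Tracking $t$ through the moves using the first step, one finds that (Op2) fixes $t$, (Op3) sends $t\mapsto -t$, (Op1) sends $t$ to a cyclic permutation of $t-t_2(1,\dots,1)$, and (Op4) sends $t$ to $(t_1,t_2,t_3-t_2,\dots,t_n-t_2)$. On the level of the parity multiset of $t$, each of these is either the identity (always for (Op2) and (Op3), and for (Op1) and (Op4) when $t_2$ is even), a flip of all coordinates (for (Op1) when $t_2$ is odd), or a flip of the coordinates $t_3,\dots,t_n$ only (for (Op4) when $t_2$ is odd); since $t_1=0$ is even, in this last case the contributions of $t_1$ (to $E$) and $t_2$ (to $O$) cancel, so $E(t)-O(t)$ again merely changes sign. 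In every case $\lvert E(t)-O(t)\rvert$ is unchanged, so $\mu_{p,\tau}$ is invariant.
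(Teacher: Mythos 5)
Your argument is correct. Note first that this paper does not actually prove the lemma: it is quoted from the authors' earlier paper \cite{OshiroOyamaguchi20}, so there is no in-paper proof to compare against; your proposal has to stand on its own, and it does. The reduction of all four quantities to the edge-sum vector $s(\boldsymbol{a})=(a_1+a_2,\dots,a_n+a_1)$ is the right unifying device, and your transformation rules for $s$ under (Op1)--(Op4) are all correct (I checked in particular that (Op2) fixes $s$, including the wrap-around term $a_n+a_1$, which is where evenness of $n$ enters, and that (Op4) gives $(s_1,s_2)\mapsto(s_2,2s_2-s_1)$). The invariance of $\tau_p$, $\varepsilon_p$, $\mu_p$ then follows as you say, with the parenthetical remark about $p$ (resp.\ $p/\tau$) being even indeed needed so that parity of residues is well defined; this matches the hypotheses under which these maps are used in Theorem~\ref{main}. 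The only delicate point is $\mu_{p,\tau}$, and you identify and resolve it correctly: the identity $s(c)_i=(s_i-s_1)/\tau$ for the rescaled tuple $c$ (which I verified, including the wrap-around coordinate), so that $\mu_{p,\tau}(\boldsymbol{a})=\lvert E(t)-O(t)\rvert$ with $t_1=0$, and then the case analysis showing that each generator acts on the parity multiset of $t$ either trivially, by a global flip, or (for (Op4) with $t_2$ odd) by a flip of $t_3,\dots,t_n$ whose effect on $E(t)-O(t)$ is again only a sign change, absorbed by the absolute value. Two points are left implicit but are harmless: for $\tau_p$ one needs the set of admissible divisors $k$ preserved in both directions, which follows since each generator's inverse is again a composite of generators (as noted in the paper's Remark after Definition~\ref{def:Requiv}); and applying the identity $t_i=(s_i-s_1)/\tau$ to the transformed tuple uses $\tau_p(\boldsymbol{b})=\tau$, which you have already established at that stage.
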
\label{2}

We then have our main theorem, which gives the complete classification of coloring conditions for vertices of spatial graph diagrams.
\begin{theorem}\label{main}
(1) Let $n=2$.

(i) When $p$ is an odd integer, we have 
\[
U_{p,2}/{\sim}=\left\{ U_{p,2}\right\}.
\]

(ii) When $p$ is an even integer, we have
\[
U_{p,2}/{\sim}=\left\{ \eta_{\varepsilon} \mid \varepsilon \in \{0, 1\} \right\},\]
where
\[
\eta_{\varepsilon}=\{\bm{a} \in U_{p,2} \mid \varepsilon_p(\bm{a})=\varepsilon \pmod{2}\}.
\]

(2) Let $n$ be an even integer greater than 2. 

(i) When p is an odd integer, we have
\[U_{p,n}/{\sim}=\left\{ \delta_{\tau}\mid \tau \in \{1,\ldots,p\} \mbox{ s.t. } \tau|p \right\},\]
where
\[
\delta_{\tau}=\{\bm{a} \in U_{p,n} \mid \tau_p(\bm{a})=\tau\}.
\]

(ii) When $p$ is an even integer, we have
\begin{eqnarray*}
&&U_{p,n}/{\sim}\\
&=&\left\{\alpha_{\tau, \mu}  {\Big |} 
\begin{array}{l}
 \tau \in \{1,\ldots,p\} \mbox{ s.t. } (\tau|p) \wedge \big(\tau \equiv 1 {\rm \  (mod \ 2)} \big), \\
\mu \in \mathbb{Z} \mbox{ s.t. } (-n<\mu<n) \wedge \big(\mu \equiv 0 {\rm \  (mod \ 2)}\big) \wedge \big(\frac{n-|\mu|}{2} \equiv 0 {\rm \  (mod \ 2)}\big)
\end{array}\hspace{-2mm}
\right\}\\
&\bigcup&  \left\{\beta_{\tau, \varepsilon}  {\Big |} 
\begin{array}{l}
 \tau \in \{1,\ldots,p\} \mbox{ s.t. } (\tau|p) \wedge \big(\tau \equiv 0 {\rm \  (mod \ 2)}\big) \wedge \big( \frac{p}{\tau} \equiv 1 {\rm \  (mod \ 2)}\big),\\
 \varepsilon \in \{0,1\}
  \end{array}\hspace{-2mm}
   \right\}\\
&\bigcup &\left\{\gamma_{\tau, \varepsilon, \mu}  {\Bigg |} 
\begin{array}{l}
 \tau \in \{1,\ldots,p\} \mbox{ s.t. } (\tau|p) \wedge \big(\tau \equiv 0 {\rm \  (mod \ 2)}\big) \wedge \big( \frac{p}{\tau} \equiv 0 {\rm \  (mod \ 2)}\big),\\
 \varepsilon \in \{0,1\},\\
 \mu \in \mathbb{Z} \mbox{ s.t. } (0 \leq \mu<n) \wedge \big(\mu \equiv 0 {\rm \  (mod \ 2)}\big) \wedge \big(\frac{n-|\mu|}{2} \equiv 0 {\rm \  (mod \ 2)}\big)
  \end{array}\hspace{-2mm}
   \right\},
   \end{eqnarray*}
where 
\[
\alpha_{\tau, \mu}=\{\bm{a} \in U_{p,n} \mid \tau_p(\bm{a})=\tau, ~\mu_p(\bm{a})=\mu \},
\]
\[
\beta_{\tau, \varepsilon}=\{\bm{a} \in U_{p,n} \mid \tau_p(\bm{a})=\tau, ~\varepsilon_p(\bm{a})=\varepsilon \},
\]
and
\[
\gamma_{\tau, \varepsilon, \mu}=\{\bm{a} \in U_{p,n} \mid \tau_p(\bm{a})=\tau, ~\varepsilon_p(\bm{a})=\varepsilon, 
~\mu_{p, \tau}(\bm{a})=\mu \}.
\]
\end{theorem}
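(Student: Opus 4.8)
The plan is to first carry out a reduction that makes the combinatorics transparent, and then split the statement into a ``lower bound'' part (the four functions are invariant, are realizable, and satisfy exactly the listed constraints) and a ``connectivity'' part (within a common level set of all four functions everything is $\sim$-equivalent), concentrating the real effort on the latter. For the reduction, put $b_i=a_i+a_{i+1}$ with indices read modulo $n$ and $\bm{b}=(b_1,\dots,b_n)$. Since $n$ is even, $\sum_i(-1)^{i+1}b_i\equiv 0\pmod p$, so $\bm{b}$ ranges over the subgroup $B_n\subset\mathbb Z_p^n$ cut out by this single relation. A direct computation shows that (Op2) leaves $\bm{b}$ unchanged and that the (Op2)-moves act transitively on each fiber of $\bm{a}\mapsto\bm{b}$ (the fiber is a coset of $\{(t,-t,t,\dots,-t)\mid t\in\mathbb Z_p\}$, which is precisely the set of increments (Op2) produces). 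Hence $\bm{a}\sim\bm{a}'$ whenever $\bm{b}(\bm{a})=\bm{b}(\bm{a}')$, and classifying $U_{p,n}/\!\sim$ becomes classifying $B_n$ under the equivalence generated by (Op1), (Op3), (Op4) expressed in terms of $\bm{b}$. One computes that these become: the cyclic shift (Op1); the reflection $\bm{b}\mapsto(2s-b_1,\dots,2s-b_n)$ for arbitrary $s\in\mathbb Z_p$ (Op3); and the local move $(b_1,b_2,b_3,\dots)\mapsto(b_2,\,2b_2-b_1,\,b_3,\dots)$ (Op4), which by conjugating with (Op1) may be applied to any cyclically adjacent pair. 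Passing to the difference vector $d_i=b_{i+1}-b_i$ — which satisfies $\sum_i d_i=0$ and, because $\bm{b}\in B_n$, also $\sum_{i\ \mathrm{odd}}d_i=0$ — (Op3) negates $(d_i)_i$ and (Op4) acts by the mutation $d_{i-1}\mapsto d_{i-1}+d_i$, $d_{i+1}\mapsto d_{i+1}-d_i$. These are essentially the moves behind the $\mathcal{A}$-palette equivalence of Fox $p$-colorings recalled in Section~3, the one difference being that we are now confined to the sublattice $B_n$, and it is exactly this confinement that, when $p$ is even, produces the finer invariants $\varepsilon_p,\mu_p,\mu_{p,\tau}$ beyond $\tau_p$.

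For the lower-bound part: by the lemma of \cite{OshiroOyamaguchi20} all four functions are $\sim$-invariant, so each of the sets $U_{p,2}$, $\eta_\varepsilon$, $\delta_\tau$, $\alpha_{\tau,\mu}$, $\beta_{\tau,\varepsilon}$, $\gamma_{\tau,\varepsilon,\mu}$ in the statement is a union of $\sim$-classes, and distinct listed sets are disjoint ($\varepsilon_p\equiv\infty$ on the $\alpha$'s since $\tau$ is odd there, $\varepsilon_p$ is finite on the $\beta$'s and $\gamma$'s, $\mu_{p,\tau}$ is finite only on the $\gamma$'s, and so on). Exhaustiveness of the index ranges is a short list of congruence checks: $\tau\mid p$; $\varepsilon_p\ne\infty\iff 2\mid\tau_p$; the relation $\sum_{i\ \mathrm{odd}}d_i=0$ forces the number $O(\bm{b})$ of odd $b_i$ to be even, whence $\mu_p=n-2O(\bm{b})$ is even and $\tfrac{n-|\mu_p|}{2}\in\{O(\bm{b}),E(\bm{b})\}$ is even, and $-n<\mu_p<n$ unless all $b_i$ share a parity (which would force $2\mid\tau_p$); and the analogous statements for $\mu_{p,\tau}$ after rescaling by $\tau$. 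Realizability is handled by exhibiting explicit representatives with near-constant $\bm{b}$, e.g.\ $\bm{b}=(r,\dots,r)$ (which gives $\tau_p=p$) or $\bm{b}=(r,r+\tau,r+\tau,r,\dots,r)$ (which gives $\tau_p=\tau$), inserting a prescribed number of parity flips to tune $\mu_p$ and choosing $r$ to tune $\varepsilon_p$, and checking each lies in $B_n$ with the claimed invariants.

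The heart of the proof is the connectivity statement: two elements of $B_n$ lying in a common level set of $\tau_p,\varepsilon_p,\mu_p,\mu_{p,\tau}$ are $\sim$-equivalent. The case $n=2$, where $B_2=\{(c,c)\mid c\in\mathbb Z_p\}$ and only (Op1),(Op3) act, is checked directly. For $n>2$ with $p$ odd the (Op3)-moves realize every translation of $\bm{b}$ (since $2$ is invertible) together with a global negation, while the mutations let one run a Euclidean-type algorithm on $(d_i)_i$, transferring greatest common divisors between entries and thereby reducing $\bm{b}$ to the single canonical representative attached to $\tau=\tau_p$; this proves 2(i), with 1(i) as the special case. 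For $n>2$ with $p$ even the same reduction is attempted, but the mutations and reflections preserve the $\mathbb Z_2$-valued parity pattern of $\bm{b}$ around the cycle, so the even part of $(d_i)_i$ can be simplified only as far as $\varepsilon_p$ and, after dividing out $\tau_p$, $\mu_p$ or $\mu_{p,\tau}$ permit; one then proves by induction on $p$ — the inductive step being precisely the rescaling $p\rightsquigarrow p/\tau$ built into the definition of $\mu_{p,\tau}$ — that once all four invariants agree, the residual freedom in the moves is enough to identify the two tuples. This last step, namely showing that $(\varepsilon_p,\mu_p)$ and $(\varepsilon_p,\mu_{p,\tau})$ are not merely invariant but \emph{complete}, is the main obstacle; the odd case, the $n=2$ case, and all the disjointness, realizability and constraint verifications are comparatively routine.
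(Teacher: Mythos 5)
Your reduction is sound, and it is in fact the same one the paper uses: setting $b_i=a_i+a_{i+1}$ is exactly the translation $T_{\mathcal{R}\to\mathcal{A}}$ (after normalizing $a_1=0$ by (Op2), which is your transitivity-on-fibers observation), and your computation that (Op1), (Op3), (Op4) descend to the cyclic shift, the reflection $\bm{b}\mapsto 2s-\bm{b}$, and the move $(b_1,b_2)\mapsto(b_2,2b_2-b_1)$ identifies the induced equivalence on $B_n=V_{p,n}^{\mathcal{A}}$ with the $\mathcal{A}$-palette relation $\sim^{\mathcal{A}}$ of Section~3 (this is the content of the paper's Lemmas~4.1 and 4.2). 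The invariance, disjointness and realizability checks you list are likewise fine and match the paper.

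The genuine gap is in the connectivity (completeness) step, which you yourself flag as ``the main obstacle'': for $p$ even and $n>2$ you only assert that an ``induction on $p$'', with the rescaling $p\rightsquigarrow p/\tau$, shows that the residual freedom in the moves identifies two tuples once $\tau_p,\varepsilon_p,\mu_p,\mu_{p,\tau}$ agree. No argument is given for why the parity pattern and the rescaled data are the \emph{only} obstructions, and this is precisely the substantive content of Theorem~2.2; the odd case and $n=2$ are indeed easy, but the even case is where the theorem lives. The paper does not prove this from scratch either: having reduced to the $\bm{b}$-level, it imports the already-established classification of $\sim^{\mathcal{A}}$-classes for Fox colorings (Theorem~3.2 and the canonical representatives of Lemma~3.3, both from \cite{Oshiro12}), restricted to the classes with $\kappa_p^{\A}=0$ (which is exactly your sublattice $B_n$, and is preserved by the moves since $\kappa_p^{\A}$ is a $\sim^{\A}$-invariant), and then translates the canonical representatives back with $T_{\mathcal{A}\to\mathcal{R}}$ (Lemma~4.3). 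Your own setup makes this citation available: since the induced moves on $\bm{b}$ are literally (Op1)$^{\A}$--(Op3)$^{\A}$, you could close the argument by invoking that known result instead of the unproved induction. As written, however, the proposal leaves the heart of the classification unestablished, so it is not yet a proof.
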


\begin{table}[ht]
\begin{scriptsize}
  {\begin{tabular}{|c|c|c|c|c|c|} \hline
   $U_{p,n}^{}$ & $U_{p,n}/{\sim}$ & $\tau_p$ & $\varepsilon_p$ & $\mu_p$ & $\mu_{p,\tau}$\\ \hline
\begin{minipage}{1.1cm}       
$n=2$,\\
$p$: odd
\end{minipage}
 & $U_{p,2}$ & $\langle\tau=p\rangle$ & - & -& -\\ \hline  
$n=2$,
 & $\eta_{\varepsilon}$  & $\langle\tau=p\rangle$ & $\varepsilon=0$  & $\langle \mu=2\rangle$& -\\ \cline{4-5} 
$p$: even &  &  & $\varepsilon=1$  & $\langle \mu=-2\rangle$& \\ \hline
\begin{minipage}{1.1cm}       
$n \geq 4$,\\
$p$: odd
\end{minipage} & $\delta_{\tau}$ & \begin{minipage}{1.8cm}   
$\tau\in \{1,\ldots ,p \}\\
\mbox{s.t. } \tau|p$
\end{minipage} &- & - & - \\  \hline  
     & $\alpha_{\tau, \mu}$ & 
\begin{minipage}{1.8cm}   
$\tau\in \{1,\ldots ,p \}\\
\mbox{s.t. } (\tau|p)\\
 \wedge (\tau \equiv_{2} 1)$
\end{minipage} & $\langle\varepsilon=\infty\rangle$&\begin{minipage}{2.6cm}  $-n<\mu<n$ \mbox{s.t. }\\   ($\mu \equiv_{2} 0 ) \wedge\\
 \big(\frac{n-|\mu|}{2} \equiv_{2} 0\big)$ \end{minipage} & -\\ \cline{2-6}
 \begin{minipage}[]{1.1cm}       
$n \geq 4$,\\
$p$: even
\end{minipage} 
& $\beta_{\tau, \varepsilon}$  & 
\begin{minipage}{1.8cm}   
$\tau\in \{1,\ldots ,p \}\\
 \mbox{s.t. } (\tau|p) \\
\wedge (\tau \equiv_{2} 0 )$
\end{minipage}
&  $\varepsilon=0$ & $\langle \mu=n\rangle$& -\\ \cline{4-6}
&  &\begin{minipage}{1.8cm} \begin{center}  
$ \wedge (\frac{p}{\tau} \equiv_{2} 1)$
\end{center}\end{minipage} &  $\varepsilon=1$ & $\langle\mu=-n\rangle$& -\\ \cline{2-6}
& $\gamma_{\tau, \varepsilon, \mu}$ &\begin{minipage}{1.8cm}   
$\tau\in \{1,\ldots ,p \}
$
\end{minipage}
&  $\varepsilon=0$ & $\langle \mu=n\rangle$& $0 \leq \mu<n$\\ \cline{4-5}
      & & 
\begin{minipage}{1.8cm}   
$
 \mbox{s.t. } (\tau|p) \\
 \wedge (\tau \equiv_{2} 0)\\
 \wedge (\frac{p}{\tau} \equiv_{2} 0)$
\end{minipage}
& $\varepsilon=1$ & $\langle \mu=-n\rangle$ & 
\begin{minipage}{2cm}   \mbox{s.t. } $(\mu \equiv_{2} 0)$\\ $\wedge (\frac{n-\mid\mu\mid}{2}\equiv_{2} 0)$\end{minipage}\\ \hline
\end{tabular}}
\label{Rpalettetable}
  \end{scriptsize}
    \end{table}

Table 1 shows the values by the maps $\tau_p, \varepsilon_p, \mu_p$, and $\mu_{p, \tau}$ for elements of each equivalence class. Note that $a \equiv_{2} b$ means that $a$ is congruent to $b$ modulo 2. We also note that the values with angle brackets $\langle \ \rangle$ are not used for classifying the equivalence classes. 


\section{An $\A$-palette for Fox colorings}

The first author \cite{Oshiro12} introduced the palettes of arc colorings for spatial graph diagrams in 2012. 
For any $n \in 2\mathbb Z_+$, let 
\[
U^{\A}_{p,n}=\{ (x_1, x_2, \ldots ,x_n)^{\A} \in \mathbb Z_p^n \mid 2\sum_{i=1}^n(-1)^i x_i \equiv 0 \pmod{p}\}
\]
and let $\displaystyle U_{p}^{\A}=\bigcup_{n \in  2{\mathbb Z_{+}}}U_{p,n}^{\A}$,
 where we put superscript $\A$ to the elements of $\displaystyle U_{p,n}^{\A}$ to distinguish them from the elements of $\displaystyle U_{p,n}$.

\begin{definition}\label{def:Aequiv}
Two elements $\displaystyle \boldsymbol{x}^{\A}, \boldsymbol{y}^{\A} \in U_p^{\A}$ are {\it equivalent} ($\boldsymbol{x}^{\A} \sim^{\A} \boldsymbol{y}^{\A}$)  if $\boldsymbol{x}^{\A}$ and $\boldsymbol{y}^{\A}$  are related by a finite sequence of the following transformations:

\begin{itemize}
\item[(Op1)$^{\A}$] $(x_1,  \ldots , x_n)^{\A} \longrightarrow (x_2, \ldots , x_n, x_1 )^{\A}$,
\item[(Op2)$^{\A}$] $(x_1,  \ldots , x_n)^{\A} \longrightarrow (2x-x_1,  \ldots , 2x-x_n)^{\A}$ for $x\in \mathbb{Z}_p$,
\item[(Op3)$^{\A}$] $(x_1,  \ldots , x_n)^{\A} \longrightarrow (x_2, 2x_2-x_1, x_3 , \ldots , x_n)^{\A}$.
\end{itemize}

\end{definition}

An {\it $\A$-palette   $P^{\A}$} of Fox $p$-colorings is a subset of $\displaystyle U_p^{\A}$ such that the equivalence class of any element of $P^{\A}$ is a subset of $P^{\A}$.

Put $\bm{x}^\A=(x_1, \ldots ,x_n)^\A$. Note that for the definitions of $\tau_p^{\A}, \varepsilon_p^{\A}, \mu_p^{\A}$, and $\mu_{p, \tau}^{\A}$, we treat $x_i \in \mathbb Z_p=\{0,1, \ldots, p-1\}$ for $i \in \{1,\ldots, n\}$ as an integer. We define
$\displaystyle \kappa^{\A}_{p}: U_p^{\A} \longrightarrow \mathbb{Z}_p$ by
\[
\kappa^{\A}_{p} \big( \bm{x}^{\A} \big)=\sum^n_{i=1}(-1)^i x_i,
\]and 
$\displaystyle \tau^{\A}_{p}: U_p^{\A} \longrightarrow \mathbb{Z}$ by
\[
\tau^{\A}_{p} \big( \bm{x}^{\A} \big)=\textbf{max}\{k \in \{1,\ldots,p\} \mid k|p, \ x_1 \equiv x_2 \equiv \cdots \equiv x_n \pmod{k}\}.
\]

Suppose that $p$ is an even integer. We define
$\displaystyle \varepsilon^{\A}_{p}: U_p^{\A} \longrightarrow \mathbb{Z}\cup \{\infty\}$ by
\[
\varepsilon^{\A}_{p} \big( \bm{x}^{\A}\big)=
\begin{cases}
0 & \mbox{if } x_1 \equiv x_2 \equiv \cdots \equiv x_n \equiv 0  \pmod{2},\\
1 & \mbox{if } x_1 \equiv x_2 \equiv \cdots \equiv x_n \equiv 1 \pmod{2},\\
\infty & \mbox{otherwise}.\end{cases}
\]
and
$\displaystyle \mu^{\A}_{p}: U_p^{\A} \longrightarrow \mathbb{Z}$ by
\[
\mu^{\A}_{p} \big( \bm{x}^{\A} \big)=E^{\A}\big(\bm{x}^{\A}\big)-O^{\A}\big(\bm{x}^{\A}\big),
\]
where 
\[E^{\A}\big(\bm{x}^{\A}\big)=\# \{ i \in \{1,\ldots, n\} \mid x_i \equiv 0 \pmod{2}\}\]
 and
\[O^{\A}\big(\bm{x}^{\A}\big)=\# \{ i \in \{1,\ldots, n\} \mid x_i \equiv 1 \pmod{2}\}.\]

Let $\tau \in \{1,\ldots,p\}$ be an even divisor of $p$ such that $\displaystyle \frac{p}{\tau}$ is an even integer. We define
$\displaystyle \mu^{\A}_{p, \tau}: U_p^{\A} \longrightarrow \mathbb{Z}\cup\{\infty\}$ by

\[
\mu^{\A}_{p, \tau}\big( \bm{x}^{\A} \big)
=
\begin{cases}\displaystyle
~ {\Bigg |} ~
\mu^{\A}_{{\frac{p}{\tau}}} \Bigg( \Big( 0,\frac{x_{2}-x_1}{\tau}, \ldots , \frac{x_n-x_1}{\tau} \Big)^{\A} \Bigg)
~ {\Bigg |}  & \mbox{if } \tau_p^{\A}(\bm{x}^{\A})=\tau,\\
\hspace{30mm} \infty & {\rm otherwise.}
\end{cases}
\]

\noindent We have the following theorem.
\begin{theorem}[\cite{Oshiro12}]\label{Amain}
(1) Let $n=2$. 

(i) When $p$ is an odd integer, we have 
\[
U^{\A}_{p,2}/{\sim}^{\A}=\left\{ U_{p,2}^{\A}\right\}.
\]

(ii) When $p$ is an even integer such that $\displaystyle \frac{p}{2}$ is an odd integer, we have
\[
U_{p,2}^{\A}/{\sim}^{\A}=\left\{ \eta^{\A}_{\mu} \mid \mu \in \{-2, 0, 2 \}\right\},\]
where
\[
\eta^{\A}_{\mu}=\{\bm{x}^{\A} \in U_{p,2}^{\A} \mid \mu^{\A}_{p} \big( \bm{x}^{\A} \big)=\mu \}.
\]

(iii) When $p$ is an even integer such that $\displaystyle \frac{p}{2}$ is an even integer, we have
\[
U_{p,2}^{\A}/{\sim}^{\A}=\left\{ \eta^{\A}_{\tau, \varepsilon} \mid \tau \in \left\{\frac{p}{2}, p\right\}; \varepsilon \in \{0, 1\}\right\},\]
where
\[
\eta^{\A}_{\tau, \varepsilon}=\{\bm{x}^{\A} \in U_{p,2}^{\A} \mid \tau^{\A}_{p} \big( \bm{x}^{\A} \big)=\tau, \varepsilon^{\A}_p(\bm{x}^{\A})=\varepsilon \}.
\]
   
(2) Let $n$ be an even integer greater than 2.

 (i) When p is an odd integer, we have
\[U_{p,n}^{\A}/{\sim}^{\A}=\left\{ \delta^{\A}_{\tau}\mid \tau \in \{1,\ldots,p\} \mbox{ s.t. } \tau|p \right\},\]
where
\[
\delta^{\A}_{\tau}=\{\bm{x}^{\A} \in U^{\A}_{p,n} \mid \tau^{\A}_{p}(\bm{x}^{\A})=\tau\}.
\]

(ii) When $p$ is an even integer, we have
\begin{eqnarray*}
&&U^{\A}_{p,n}/{\sim}^{\A}\\
&=&\left\{\alpha^{\A}_{\kappa, \tau, \mu}  {\Big |} 
\begin{array}{l}
\kappa \in \{0, \frac{p}{2}\}, \tau \in \{1,\ldots,p\} \mbox{ s.t. } (\tau|p) \wedge \big(\tau \equiv 1 {\rm \  (mod \ 2)}\big),\\
\mu \in \mathbb{Z} \mbox{ s.t. } (-n<\mu<n) \wedge \big(\mu \equiv 0 {\rm \  (mod \ 2)}\big) \wedge \big(\frac{n-|\mu|}{2} \equiv \kappa {\rm \  (mod \ 2)}\big)
\end{array}\hspace{-2mm}
\right\}\\
&\bigcup&  \left\{\beta^{\A}_{\tau, \varepsilon}  {\Big |} 
\begin{array}{l}
 \tau \in \{1,\ldots,p\} \mbox{ s.t. } (\tau|p) \wedge \big(\tau \equiv 0 {\rm \  (mod \ 2)}\big) \wedge \big( \frac{p}{\tau} \equiv 1 {\rm \  (mod \ 2)}\big),\\
 \varepsilon \in \{0,1\}
  \end{array}\hspace{-2mm}
   \right\}\\
&\bigcup& \left\{\gamma^{\A}_{\kappa, \tau, \varepsilon, \mu}  {\Bigg |} 
\begin{array}{l}
\kappa \in \{0, \frac{p}{2}\},  \varepsilon \in \{0,1\},\\
\tau \in \{1,\ldots,p\} \mbox{ s.t. } (\tau|p) \wedge \big(\tau \equiv 0 {\rm \  (mod \ 2)}\big) \wedge \big( \frac{p}{\tau} \equiv 0 {\rm \  (mod \ 2)}\big), \\
   \mu \in \mathbb{Z} \mbox{ s.t. } (0 \leq \mu<n) \wedge \big(\mu \equiv 0 {\rm \  (mod \ 2)}\big) \wedge \big(\frac{n-\mu}{2} \equiv \frac{\kappa}{\tau} {\rm \  (mod \ 2)}\big)
  \end{array}\hspace{-2mm}
   \right\},
   \end{eqnarray*}
where 
\[
\alpha^{\A}_{ \kappa, \tau, \mu}=\{\bm{x}^{\A} \in U_{p,n}^{\A} \mid ~\kappa^{\A}_p(\bm{x}^{\A})=\kappa,  \tau^{\A}_p(\bm{x}^{\A})=\tau, ~\mu_p^{\A}(\bm{x}^{\A})=\mu \},
\]
\[
\beta^{\A}_{\tau, \varepsilon}=\{\bm{x}^{\A} \in U^{\A}_{p,n} \mid \tau^{\A}_p(\bm{x}^{\A})=\tau, ~\varepsilon_p^{\A}(\bm{x}^{\A})=\varepsilon \},
\]
and
\[
\gamma^{\A}_{\kappa, \tau, \varepsilon, \mu}=\{\bm{x}^{\A} \in U^{\A}_{p,n} \mid ~\kappa^{\A}_p(\bm{x}^{\A})=\kappa, \tau^{\A}_p(\bm{x}^{\A})=\tau, ~\varepsilon^{\A}_p(\bm{x}^{\A})=\varepsilon, ~\mu^{\A}_{p, \tau}(\bm{x}^{\A})=\mu \}.
\]
\end{theorem}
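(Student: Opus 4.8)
\textbf{Proof proposal for Theorem~\ref{Amain}.}
The statement has two halves. First, the five maps $\kappa^{\A}_p,\tau^{\A}_p,\varepsilon^{\A}_p,\mu^{\A}_p,\mu^{\A}_{p,\tau}$ are invariant under $\sim^{\A}$, so each set listed on the right is a union of $\sim^{\A}$-classes and sets with different index tuples are disjoint. Second, conversely, each listed set is a \emph{single} nonempty class. I would prove the first half by a short case analysis of the three generators, and the second half by exhibiting one explicit normal form in each listed set and showing every element of $U^{\A}_{p,n}$ can be carried to the normal form of its box by a finite sequence of $(\mathrm{Op1})^{\A}$--$(\mathrm{Op3})^{\A}$; non-emptiness of each box, with the stated ranges for $\mu$, will drop out of the normal-form construction. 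The organizing device throughout is the ``divide by $\tau$'' map, which lets one reduce to the case $\tau^{\A}_p=1$.

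For invariance the key remark is that $2x-x_i\equiv x_i\pmod 2$, so $(\mathrm{Op2})^{\A}$ fixes the parity of every coordinate, $(\mathrm{Op1})^{\A}$ permutes the coordinates cyclically, and $(\mathrm{Op3})^{\A}$ merely transposes the parities in positions $1$ and $2$ (the new entries $x_2$ and $2x_2-x_1$ have the parities of $x_2$ and $x_1$); hence the multiset of coordinate parities is preserved by $\sim^{\A}$, which immediately gives invariance of $\varepsilon^{\A}_p$ and of $\mu^{\A}_p=E^{\A}-O^{\A}$. A direct check shows $(\mathrm{Op1})^{\A}$ and $(\mathrm{Op2})^{\A}$ negate $\kappa^{\A}_p$ while $(\mathrm{Op3})^{\A}$ fixes it; since $2\kappa^{\A}_p\equiv 0$ on $U^{\A}_p$, we have $\kappa^{\A}_p\in\{0,p/2\}$ and $-\kappa^{\A}_p=\kappa^{\A}_p$, so $\kappa^{\A}_p$ is invariant. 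Each generator also preserves the congruence $x_1\equiv\cdots\equiv x_n\pmod k$, so $\tau^{\A}_p$ cannot decrease along a move, and as the generators are invertible (e.g.\ $(\mathrm{Op3})^{\A}$ has order dividing $p$ on the pair it affects) it cannot increase either. Finally, on $\{\tau^{\A}_p=\tau\}$ the assignment $\Phi_\tau(\bm x^{\A})=\bigl(\tfrac{x_i-c}{\tau}\bigr)_i$, with $c$ the common residue of the $x_i$ modulo $\tau$, lands in $U^{\A}_{p/\tau,n}$ (here one uses $\sum(-1)^i(x_i-c)=\kappa^{\A}_p(\bm x^{\A})\in\{0,p/2\}$, whence $\tfrac2\tau\kappa^{\A}_p(\bm x^{\A})\equiv 0\pmod{p/\tau}$), intertwines $\sim^{\A}$ on source and target, and in fact induces a bijection on $\sim^{\A}$-classes between $\{\tau^{\A}_p=\tau\}$ and $\{\tau^{\A}_{p/\tau}=1\}$; since $\mu^{\A}_{p,\tau}=\bigl|\mu^{\A}_{p/\tau}\circ\Phi_\tau\bigr|$, it too is invariant, and the classification for index $\tau$ is reduced to that for index $1$ in modulus $q:=p/\tau$.

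For completeness I would first settle $n=2$ directly from the definition of $U^{\A}_{p,2}$: a short computation with $(\mathrm{Op1})^{\A}$, $(\mathrm{Op2})^{\A}$ and $(\mathrm{Op3})^{\A}$ (the last of which, on a pair, acts as a translation by $x_2-x_1$) yields exactly the classes in part~(1) of the statement. For $n\ge 4$, apply the reduction above to land in $\{\tau^{\A}_q=1\}\subset U^{\A}_{q,n}$, where $\varepsilon^{\A}_q=\infty$ automatically; it then suffices to show that $\kappa^{\A}_q$ together with $\mu^{\A}_q$ (when $q$ is even), and nothing further when $q$ is odd, is a complete invariant there. For this I would pass to difference coordinates $d_i=x_{i+1}-x_i$ (indices mod $n$, $\sum_i d_i=0$): translations of $\bm x^{\A}$ act trivially on $\bm d$, $(\mathrm{Op2})^{\A}$ negates $\bm d$, $(\mathrm{Op1})^{\A}$ cyclically shifts $\bm d$, and $k$ iterations of $(\mathrm{Op3})^{\A}$ conjugated to act at position $i$ perform $(d_{i-1},d_i,d_{i+1})\mapsto(d_{i-1}+kd_i,\,d_i,\,d_{i+1}-kd_i)$ while fixing the other coordinates; moreover $\kappa^{\A}_q=\sum_{j\ \mathrm{odd}}d_j$ and $\tau^{\A}_q=1$ forces $\gcd(d_1,\dots,d_n,q)=1$. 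Using these ``slide'' moves and the cyclic shift one transports and merges the $d_j$ around the $n$-cycle, bringing $\bm d$ to a short explicit normal form --- e.g.\ $\bm d=(1,0,-1,0,\dots,0)$, i.e.\ $\bm x^{\A}=(0,1,1,0,\dots,0)^{\A}$, when $q$ is odd, and analogous tuples with one or two entries drawn from $\{1,q/2\}$ realizing each admissible pair $(\kappa,\mu)$ when $q$ is even --- and then reads $\kappa$ and the parity split off the corresponding $\bm x^{\A}$.

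The step I expect to be the main obstacle is precisely this last normalization in the $q$-even case: one must show that the slide moves, the cyclic shift, and the single sign-flip really suffice to reduce an arbitrary $\bm d$ with $\gcd(d_j,q)=1$ to the chosen normal form \emph{without altering $\kappa^{\A}_q=\sum_{j\ \mathrm{odd}}d_j$ or the coordinate-parity multiset} (the latter being what pins down $\mu^{\A}_q$), and that the bookkeeping reproduces exactly the congruence constraints in the statement --- $\mu\equiv 0\pmod 2$, the bound on $|\mu|$, and $\tfrac{n-|\mu|}{2}\equiv\kappa$ (resp.\ $\tfrac{\kappa}{\tau}$) $\pmod 2$. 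Unwinding these, the congruences amount to the clean claim that a tuple in $\mathbb Z_q^n$ with prescribed counts $E^{\A},O^{\A}$ of coordinates of each parity and with $\sum(-1)^ix_i$ equal to a prescribed element of $\{0,q/2\}$ exists precisely when those congruences hold; proving that these are the only obstructions (i.e.\ completeness of the invariants), together with the bijection claim for $\Phi_\tau$ and the organization needed so that the degenerate cases ($n=2$, $\tau=p$, the extreme values of $\mu$) are not missed, is the bulk of the work. Once this is in place, the theorem follows, since by the invariance part distinct normal forms carry distinct index tuples.
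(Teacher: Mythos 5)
The paper itself never proves Theorem~\ref{Amain}: it is imported wholesale from \cite{Oshiro12}, and the only in-paper material about it is Lemma~\ref{Alemma}, which records explicit representatives of the classes for later use. So your proposal has to be judged as a reconstruction of the cited proof rather than against an argument in this paper. Your invariance half is essentially right and routine: the parity multiset of the coordinates is preserved by $(\mathrm{Op1})^{\A}$--$(\mathrm{Op3})^{\A}$, $\kappa^{\A}_p$ is negated by $(\mathrm{Op1})^{\A},(\mathrm{Op2})^{\A}$ but lies in $\{0,p/2\}$, and $\tau^{\A}_p$ is preserved; this gives that the listed sets are unions of classes.

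The completeness half, however, has two genuine gaps. First, the claim that $\Phi_\tau$ ``intertwines $\sim^{\A}$ on source and target, and in fact induces a bijection on $\sim^{\A}$-classes between $\{\tau^{\A}_p=\tau\}$ and $\{\tau^{\A}_{p/\tau}=1\}$'' is false for even $\tau$: with $p=4$, $\tau=2$, $n=4$, the tuples $(0,2,0,2)^{\A}$ and $(1,3,1,3)^{\A}$ have $\tau^{\A}_4=2$ and the same image under $\Phi_2$, yet they are inequivalent because $\varepsilon^{\A}_4$ (equivalently the parity multiset, which you yourself show is preserved) differs. Structurally, $\Phi_\tau$ respects the moves only up to translations of the quotient tuple, and odd translations downstairs are not realizable by the three operations; this is exactly why $\mu^{\A}_{p,\tau}$ is defined with an absolute value and why the $\beta$- and $\gamma$-families in the statement carry the extra index $\varepsilon$ and only $|\mu|$. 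Reducing ``to the case $\tau=1$'' as you propose discards precisely the data the theorem classifies, so the reduction must be reformulated to track $\varepsilon$ and the sign of $\mu$ through the quotient. Second, the heart of the theorem --- that any two tuples with equal invariants are connected by the moves, i.e.\ the normalization of the difference vector $\bm d$ to a standard form without changing $\kappa$ or the parity data, together with the non-emptiness exactly under the stated congruences on $\mu$ --- is only announced, and you explicitly defer it as ``the bulk of the work.'' Since the invariance half is easy, what remains missing is the actual content of the theorem; completing it would amount to reproving the normal-form results of \cite{Oshiro12}, whose outcome is what this paper quotes as Lemma~\ref{Alemma}.
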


\begin{table}[ht]
\begin{scriptsize}
{  \begin{tabular}{|c|c|c|c|c|c|c|} \hline
   $U_{p,n}^{\A}$ & $U_{p,n}^{\A}/{\sim}^{\A}$ & $\kappa_p^{\A}$&$\tau_p^{\A}$ & $\varepsilon_p^{\A}$ & $\mu_p^{\A}$ & $\mu_{p,\tau}^{\A}$\\ \hline
\begin{minipage}{1cm}       
$n=2$,\\
$p$: odd
\end{minipage}
 & $U_{p,2}^{\A}$ & $\langle\kappa=0\rangle$&$\langle\tau=p\rangle$  & -& -&-\\ \hline
   \begin{minipage}{1cm}       
$n=2$,
\end{minipage} & $\eta_{\mu}^{\A}$  & $\langle\kappa=0\rangle$& $\langle\tau=p\rangle$ & $\langle\varepsilon=0\rangle$  & $\mu=2$& -\\ \cline{5-7} 
$p$: even, &  & & & $\langle\varepsilon=1\rangle$  & $\mu=-2$& -\\ \cline{3-7}
 $\frac{p}{2}$: odd &  & $\langle\kappa=\frac{p}{2}\rangle$& $\langle\tau=\frac{p}{2}\rangle$ &  $\langle\varepsilon=\infty\rangle$  & $\mu=0$& -\\ \hline
$n=2$, &  $\eta_{\tau, \varepsilon}^{\A}$  & $\langle\kappa=0\rangle$ & $\tau=p$ &$\varepsilon=0$  & $\langle\mu=2\rangle$& -\\ \cline{5-7} 
$p$: even,  &  &  &  &$\varepsilon=1$  & $\langle\mu=-2\rangle$& -\\ \cline{3-7} 
       $\frac{p}{2}$: even
 & & $\langle\kappa=\frac{p}{2}\rangle$ & $\tau=\frac{p}{2}$& $\varepsilon=0$  & $\langle\mu=2\rangle$& -\\ \cline{5-7} 
 &  &  &  &$\varepsilon=1$  & $\langle\mu=-2\rangle$ & \\ \hline
\begin{minipage}{1cm}       
$n \geq 4$,\\
$p$: odd
\end{minipage} & $\delta_{\tau}^{\A}$ &$\langle\kappa=0\rangle$& \begin{minipage}{1.8cm}   
$\tau\in \{1,\ldots ,p \}\\
\mbox{s.t. } \tau|p$
\end{minipage} &- & - & - \\  \hline  
     & $\alpha_{\kappa, \tau, \mu}^{\A}$ & $\kappa=0$&
\begin{minipage}{1.8cm}   
$\tau\in \{1,\ldots ,p \}\\
\mbox{s.t. } (\tau|p) \wedge$
\end{minipage} & $\langle\varepsilon=\infty\rangle$&\begin{minipage}{1.8cm}   $-n<\mu<n$ \\ \mbox{s.t. }
\end{minipage} & -\\ \cline{3-3}
& & $\kappa=\frac{p}{2}$&\begin{minipage}{1.8cm}$ (\tau \equiv_{2} 1)$\end{minipage} & & \begin{minipage}{1.8cm}  $(\mu \equiv_{2} 0 ) \wedge$\\$(\frac{n-\mid \mu \mid}{2}\equiv_{2} \kappa)$
\end{minipage} & \\ \cline{2-7}
 \begin{minipage}{1.2cm}       
$n \geq 4$,\\
$p$: even
\end{minipage} 
& $\beta_{\tau, \varepsilon}^{\A}$  &$\langle \kappa=0\rangle$& 
\begin{minipage}[l]{1.8cm}   
$\tau\in \{1,\ldots ,p \}\\
 \mbox{s.t. } (\tau|p) \wedge $
\end{minipage}
&  $\varepsilon=0$ & $\langle\mu=n\rangle$& -\\ \cline{5-7}
& & &\begin{minipage}[l]{1.8cm}   $(\tau \equiv_{2} 0) \wedge\\ (\frac{p}{\tau} \equiv_{2} 1)$
\end{minipage}
 &   $\varepsilon=1$ & $\langle\mu=-n\rangle$& -\\ \cline{2-7}
& $\gamma_{\kappa, \tau, \varepsilon, \mu}^{\A}$ &$\kappa=0$&\begin{minipage}{1.8cm}   
$\tau\in \{1,\ldots ,p \}$
\end{minipage}
&  $\varepsilon=0$ & $\langle\mu=n\rangle$& 
\begin{minipage}{1.5cm} $0 \leq \mu<n$ \end{minipage}
\\ \cline{5-6}
      & & &\begin{minipage}{1.5cm}  \mbox{s.t. } $(\tau|p) \wedge$\end{minipage}& $\varepsilon=1$ & $\langle\mu=-n\rangle$ & \begin{minipage}{1.5cm}  \mbox{s.t. }  \end{minipage}
\\ \cline{5-6}\cline{3-3}
& & $\kappa=\frac{p}{2}$ &\begin{minipage}{1.5cm}$(\tau \equiv_{2} 0) \wedge$\end{minipage}& $\varepsilon=0$ & $\langle\mu=n\rangle$ & \begin{minipage}{1.5cm}$(\mu \equiv_{2} 0)\wedge$ \end{minipage}
\\ \cline{5-6}
&  &&\begin{minipage}{1.5cm}$(\frac{p}{\tau} \equiv_{2} 0)$\end{minipage}&  $\varepsilon=1$ & $\langle\mu=-n\rangle$& \begin{minipage}{1.65cm}$(\frac{n-\mu}{2}\equiv_{2} \frac{\kappa}{\tau} )$\end{minipage}\\ 
\hline
  \end{tabular}}
    \end{scriptsize}
  \label{Apalettetable}
\end{table}

Table 2 shows the values by the maps $\kappa^{\A}_p, \tau^{\A}_p, \varepsilon^{\A}_p, \mu^{\A}_p$, and $\mu^{\A}_{p, \tau}$ for elements of each equivalence class.   Note that $a \equiv_{2} b$ means that $a$ is congruent to $b$ modulo 2. We also note that the values with angle brackets $\langle \ \rangle$ are not used for classifying equivalence classes. 

For example, the value $\kappa_{p}^{\A}(\bm{x}^{\A})=0$ for $\bm{x}^{\A}=(x_1, \ldots, x_n)^{\A} \in \beta_{\tau,\varepsilon}^{\A}$  is obtained as follows: Since $\tau_{p}^{\A}(\bm{x}^{\A})=\tau$, 

\begin{align*}
\bm{x}^{\A}&=(x_1, \ldots, x_n)^{\A}\\
&=(x_1, x_1+y_2\tau, x_1+y_3\tau, \ldots, x_1+y_n\tau)^{\A}
\end{align*}
for some $y_2, \ldots , y_n \in \{0, \ldots, p-1\}$. We then have 
\begin{align*}
\kappa^{\A}_p(\bm{x}^{\A})&=\kappa^{\A}_p\big((x_1, \ldots, x_n)^{\A}\big)\\
&=\kappa^{\A}_p\big((x_1, x_1+y_2\tau, x_1+y_3\tau, \ldots, x_1+y_n\tau)^{\A}\big)\\
&=-x_1+(x_1+y_2\tau)-(x_1+y_3\tau)+ \cdots +(x_1+y_n\tau)\\
&=y_2\tau-y_3\tau+ \cdots +y_n\tau\\
&=(y_2-y_3+ \cdots +y_n)\tau.
\end{align*}
Assume that $\kappa_{p}^{\A}(\bm{x}^{\A})=\frac{p}{2}$, where $\kappa_{p}^{\A}(\bm{x}^{\A}) \in \{0, \frac{p}{2}\}$ since $\bm{x}^{\A} \in U_p^{\A}$. We then have $ (y_2-y_3+ \cdots +y_n)\tau=\frac{p}{2}$. Hence $ 2(y_2-y_3+ \cdots +y_n)=\frac{p}{\tau}$, which contradicts the condition that $ \frac{p}{\tau}$ is odd. Therefore, we have $\kappa_{p}^{\A}(\bm{x}^{\A})=0$.
We leave the proof of the other values in the table to the reader.

We have the following properties for the equivalence classes such that $\kappa_p$ of their elements are 0.

\begin{lemma}[\cite{Oshiro12}]\label{Alemma}
(1) 
When $p$ is an odd integer, for $\bm{x}^{\A} \in U_{p,2}^{\A}$, it holds 
\[
\bm{x}^{\A}  \sim^{\A} (0,0)^{\A}.
\]

(2) 
For $\bm{x}^{\A} \in \eta^{\A}_{\mu}$, it holds 
\[
\bm{x}^{\A}  \sim^{\A} \begin{cases}
(0,0)^{\A} & {\rm if \ } \mu=2,\\
(1,1)^{\A} & {\rm if \ } \mu=-2.
\end{cases}
\]

(3) For $\bm{x}^{\A} \in \eta^{\A}_{\tau=p,\varepsilon}$, it holds 
\[
\bm{x}^{\A}  \sim^{\A} \begin{cases}
(0,0)^{\A} & {\rm if \ } \varepsilon=0,\\
(1,1)^{\A} & {\rm if \ } \varepsilon=1.
\end{cases}
\]

(4) 
For  $\bm{x}^{\A} \in \delta^{\A}_{\tau}$, it holds 
\[
\bm{x}^{\A}  \sim^{\A} (0, \tau, 0, \tau, \ldots, 0, \frac{2-n}{2}\tau)^{\A}.
\]

(5) 
For  $\bm{x}^{\A} \in \alpha^{\A}_{ \kappa=0, \tau, \mu}$, it holds 
\[
\bm{x}^{\A}  \sim^{\A} \begin{cases}
(0,\tau,0,\tau,\ldots,0,\frac{2-n-\mu}{2}\tau,\underbrace{\tau,\ldots,\tau}_{-\mu})^{\A} & {\rm if \ } -n<\mu\leq0,\\
(0,\tau,0,\tau,\ldots,0,\frac{2-n+\mu}{2}\tau,\underbrace{0,\ldots,0}_{\mu})^{\A}& {\rm if \ } 0<\mu<n.\end{cases}
\]
 
(6) 
For  $\bm{x}^{\A} \in \beta^{\A}_{\tau, \varepsilon}$, it holds 
\[
\bm{x}^{\A}  \sim^{\A} \begin{cases}
(0,\tau,0,\tau,\ldots,0,\frac{2-n}{2}\tau)^{\A} & {\rm if \ } \varepsilon=0,\\
(1,\tau+1,1,\tau+1,\ldots,1,\frac{2-n}{2}\tau+1)^{\A}& {\rm if \ } \varepsilon=1.\end{cases}
\]

(7) 
For  $\bm{x}^{\A} \in \gamma^{\A}_{\kappa=0, \tau, \varepsilon, \mu}$, it holds 
\[
\bm{x}^{\A}  \sim^{\A} \begin{cases}
(0,\tau,0,\tau,\ldots,0,\frac{2-n+\mu}{2}\tau,\underbrace{0,\ldots,0}_{\mu})^{\A} & {\rm if \ } \varepsilon=0,\\
(1,\tau+1,1,\tau+1,\ldots,1,\frac{2-n+\mu}{2}\tau+1,\underbrace{1,\dots,1}_{\mu})^{\A}& {\rm if \ } \varepsilon=1.\end{cases}
\]

\end{lemma}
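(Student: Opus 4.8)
The plan is to derive all seven statements from Theorem~\ref{Amain}. That theorem identifies each of the classes appearing in Lemma~\ref{Alemma} --- namely $\delta^{\A}_{\tau}$, $\eta^{\A}_{\mu}$, $\eta^{\A}_{\tau,\varepsilon}$, $\alpha^{\A}_{0,\tau,\mu}$, $\beta^{\A}_{\tau,\varepsilon}$ and $\gamma^{\A}_{0,\tau,\varepsilon,\mu}$ --- as a single $\sim^{\A}$-equivalence class, cut out inside $U^{\A}_{p,n}$ by prescribed values of the invariants $\kappa^{\A}_{p}$, $\tau^{\A}_{p}$, $\varepsilon^{\A}_{p}$, $\mu^{\A}_{p}$, $\mu^{\A}_{p,\tau}$. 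Consequently, for each case it suffices to check that the explicit tuple $\bm{r}^{\A}$ on the right-hand side lies in $U^{\A}_{p,n}$ and realizes exactly the invariant values that define the class on the left; then $\bm{x}^{\A}$ and $\bm{r}^{\A}$ belong to the same class and are therefore $\sim^{\A}$-equivalent. Thus the proof reduces to a finite list of invariant evaluations, with no explicit sequences of moves to exhibit.

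I would organize these evaluations as follows. Every target class has $\kappa^{\A}_{p}=0$, and each $\bm{r}^{\A}$ satisfies $\kappa^{\A}_{p}(\bm{r}^{\A})=0$ by a telescoping cancellation --- the role of the ``pivot'' entry $\tfrac{2-n\pm\mu}{2}\tau$ is precisely to annihilate the contribution of the alternating block $0,\tau,0,\tau,\dots$ --- so in particular $2\kappa^{\A}_{p}(\bm{r}^{\A})\equiv 0\pmod{p}$ and hence $\bm{r}^{\A}\in U^{\A}_{p,n}$. The cases $\varepsilon=1$ (parts (3), (6), (7)) reduce to the cases $\varepsilon=0$: the two tuples displayed there differ by adding $1$ to every coordinate, and this operation fixes $\kappa^{\A}_{p}$, $\tau^{\A}_{p}$ and $\mu^{\A}_{p,\tau}$ (the last because $\mu^{\A}_{p,\tau}$ depends only on the differences $x_{i}-x_{1}$) while exchanging the values $0$ and $1$ of $\varepsilon^{\A}_{p}$. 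The cases $n=2$ (parts (1)--(3)) are immediate, since $\kappa^{\A}_{p}=0$ forces $x_{1}=x_{2}$ and one only reads off the invariants of $(0,0)^{\A}$ and $(1,1)^{\A}$. For $n\ge 4$ (parts (4)--(7)) the remaining checks are: $\tau^{\A}_{p}(\bm{r}^{\A})=\tau$, which holds because every entry of $\bm{r}^{\A}$ is divisible by $\tau$ while both $0$ and $\tau$ occur among the entries, so no divisor of $p$ exceeding $\tau$ can make all entries mutually congruent; and the value of $\mu^{\A}_{p}$ (for $\alpha^{\A}$, $\beta^{\A}$) or $\mu^{\A}_{p,\tau}$ (for $\gamma^{\A}$), obtained by counting the parities of the entries of $\bm{r}^{\A}$, respectively of the rescaled tuple $(x_{i}/\tau)^{\A}$, bearing in mind that $\tau$ is odd in the $\alpha^{\A}$-case and even in the $\beta^{\A}$- and $\gamma^{\A}$-cases.

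The only place where real care is needed --- and the main obstacle --- is this parity bookkeeping in parts (5) and (7). One must check that the alternating prefix $0,\tau,0,\tau,\dots$ is nonempty, so that a $\tau$ genuinely occurs and $\tau^{\A}_{p}(\bm{r}^{\A})$ is not accidentally a proper multiple of $\tau$; this is exactly what the standing constraint $\tfrac{n-|\mu|}{2}\equiv 0 \pmod 2$ guarantees, as it forces $n-|\mu|\ge 4$. The same constraint makes the pivot coefficient $1-\tfrac{n-|\mu|}{2}$ odd, so the pivot entry is odd when $\tau$ is odd; combining this with the counts $\tfrac{n-|\mu|}{2}$ zeros and $\tfrac{n-|\mu|-2}{2}$ copies of $\tau$ in the prefix, together with the $|\mu|$ entries of the trailing block, the quantity $E^{\A}-O^{\A}$ comes out to be $\mu$ (for $\alpha^{\A}$), and likewise $|\mu|=\mu$ after dividing by $\tau$ (for $\gamma^{\A}$). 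I would write parts (5) and (7) out in full and leave (1)--(4) and (6), which are strictly simpler instances of the same computation, to the reader. (Should one prefer a proof not relying on Theorem~\ref{Amain}, one can instead give a direct reduction algorithm: iterating (Op3)$^{\A}$ on an adjacent pair replaces a coordinate by itself plus a multiple of the neighbouring difference, so that (Op1)$^{\A}$, (Op2)$^{\A}$ and (Op3)$^{\A}$ together run a Euclidean-type reduction carrying $\bm{x}^{\A}$ to the stated normal form; but routing through Theorem~\ref{Amain} is considerably shorter.)
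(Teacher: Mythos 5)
The paper itself gives no proof of Lemma~\ref{Alemma}: it is imported verbatim from \cite{Oshiro12}, alongside Theorem~\ref{Amain}, and the paper only ever \emph{uses} it (to establish Lemma~\ref{element} via $T_{\mathcal{R}\to\mathcal{A}}$). So there is no in-paper argument to compare against, and your proposal must be judged on its own terms. On those terms it is correct: since each of $\eta^{\A}_{\mu}$, $\eta^{\A}_{\tau,\varepsilon}$, $\delta^{\A}_{\tau}$, $\alpha^{\A}_{0,\tau,\mu}$, $\beta^{\A}_{\tau,\varepsilon}$, $\gamma^{\A}_{0,\tau,\varepsilon,\mu}$ is asserted by Theorem~\ref{Amain} to be a single $\sim^{\A}$-class cut out by the listed invariants, it does suffice to check that each displayed representative lies in $U^{\A}_{p,n}$ and realizes exactly those invariant values, and your verifications are the right ones: the telescoping $\kappa^{\A}_p=0$ (which also gives membership in $U^{\A}_{p,n}$), the fact that $\tau$ genuinely occurs among the entries because $\tfrac{n-|\mu|}{2}\equiv 0\pmod 2$ together with $|\mu|<n$ forces $n-|\mu|\ge 4$, the oddness of the pivot coefficient $1-\tfrac{n-|\mu|}{2}$, the parity count giving $E^{\A}-O^{\A}=\mu$, the invariance of $\mu^{\A}_{p,\tau}$ under adding $1$ to all coordinates, and the remark that the parity of the rescaled pivot is well defined because $p/\tau$ is even. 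The one caveat you should state explicitly is logical order: in \cite{Oshiro12} the normal forms of Lemma~\ref{Alemma} are almost certainly the engine that \emph{proves} the classification Theorem~\ref{Amain} (one shows the invariants are preserved by ${\rm (Op1)}^{\A}$--${\rm (Op3)}^{\A}$ and then that every tuple reduces to a normal form determined by them), so deriving the lemma from the theorem is circular as a reconstruction of the original argument, even though it is admissible in the present paper where both results are taken as black boxes. A self-contained proof would have to carry out the explicit reduction you only sketch in your final parenthesis.
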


\begin{remark}
In \cite{Oshiro12}, the properties shown in this section were discussed only when $p \geq 3$. However, we can see that these properties also hold for $p=2$.
\end{remark}

 \section{Proof of the main theorem}
 
Let $n\in 2 \mathbb Z_+$. Set 
\[
V_{p,n}^{\mathcal{A}}=\Big\{ (x_1, \ldots , x_n)^{\mathcal{A}} \in  U_p ~\Big|~ \sum_{i=1}^n (-1)^ix_i \equiv 0 \pmod{p} \Big\},
\] 
which is an $\mathcal{A}$-palette of Fox $p$-colorings.
Define a map 
$T_{\mathcal{R}\to \mathcal{A}}: \{0\} \times \mathbb Z_p^{n-1} \to  V_{p,n}^{\mathcal{A}} $
by 
\begin{align*}
T_{\mathcal{R}\to \mathcal{A}} \big( (0, a_2, \ldots ,  a_n) \big) 
&=(a_2, a_2 + a_3, \ldots ,a_{n-1}+a_n ,a_n )^{\mathcal{A}},
\end{align*}	
where $T_{\mathcal{R}\to \mathcal{A}}$ is regarded as a translation from an $n$-tuple of region colors to that of arc colors.
 Define a map $T_{\mathcal{A}\to \mathcal{R}}:   V_{p,n}^{\mathcal{A}}  \to  \{0\} \times \mathbb Z_p^{n-1}$ by 
 \begin{align*}
&T_{\mathcal{A}\to \mathcal{R}}\big( (x_1, x_2, \ldots , x_n)^{\mathcal{A}} \big) \\
&= \Big(0, \sum_{i=1}^{1} (-1)^{i+1} x_i , \sum_{i=1}^{2} (-1)^{i+2} x_i , \ldots , \sum_{i=1}^{n-1} (-1)^{i+(n-1)} x_i \Big),
\end{align*}
where $T_{\mathcal{A}\to \mathcal{R}}$ is regarded as a translation from an $n$-tuple of arc colors to that of region colors.

 \begin{lemma}
$ T_{\mathcal{R}\to \mathcal{A}} $ and $T_{\mathcal{A}\to \mathcal{R}}$ are  inverses of each other, and hence, both of $ T_{\mathcal{R}\to \mathcal{A}} $ and $T_{\mathcal{A}\to \mathcal{R}}$ are bijective.
 \end{lemma}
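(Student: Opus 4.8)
The plan is to verify directly that the two maps compose to the identity on each side; since both domains and codomains are finite (indeed $\{0\}\times\mathbb Z_p^{n-1}$ has $p^{n-1}$ elements and so does $V_{p,n}^{\mathcal A}$, as an element $(x_1,\dots,x_n)^{\mathcal A}$ is constrained by one linear equation mod $p$), it suffices to check $T_{\mathcal A\to\mathcal R}\circ T_{\mathcal R\to\mathcal A}=\mathrm{id}$, and then invoke a counting argument, or alternatively check both composites. I would do both composites explicitly to avoid any subtlety about whether the given formula for $T_{\mathcal R\to\mathcal A}$ actually lands in $V_{p,n}^{\mathcal A}$; in fact the very first step should be to confirm that $T_{\mathcal R\to\mathcal A}$ is well-defined, i.e. that $\sum_{i=1}^n(-1)^i(a_i+a_{i+1})\equiv 0\pmod p$ (indices mod $n$), which holds because the sum telescopes: each $a_j$ appears once with coefficient $(-1)^{j-1}$ and once with coefficient $(-1)^j$, so everything cancels. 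Similarly one checks $T_{\mathcal A\to\mathcal R}$ lands in $\{0\}\times\mathbb Z_p^{n-1}$, which is immediate since the first coordinate is declared to be $0$.

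Next I would compute $T_{\mathcal A\to\mathcal R}\circ T_{\mathcal R\to\mathcal A}$. Starting from $(0,a_2,\dots,a_n)$, set $x_i=a_i+a_{i+1}$ for $i<n$ and $x_n=a_n$ (using $a_1=0$). Then the $k$-th coordinate of the output ($k=1,\dots,n-1$) is $\sum_{i=1}^k(-1)^{i+k}x_i=(-1)^k\sum_{i=1}^k(-1)^i(a_i+a_{i+1})$; this telescopes to $(-1)^k\cdot(-1)^k a_{k+1}=a_{k+1}$ (the boundary term $a_1=0$ drops out), recovering $(0,a_2,\dots,a_n)$. For the reverse composite $T_{\mathcal R\to\mathcal A}\circ T_{\mathcal A\to\mathcal R}$, start from $(x_1,\dots,x_n)^{\mathcal A}\in V_{p,n}^{\mathcal A}$, write $a_1=0$ and $a_{k+1}=\sum_{i=1}^k(-1)^{i+k}x_i$ for $k=1,\dots,n-1$; then for $j=1,\dots,n-1$ one gets $a_j+a_{j+1}=x_j$ by a one-line telescoping identity (consecutive partial sums of $\sum(-1)^i x_i$ up to sign), and finally $a_n+a_1=a_n=\sum_{i=1}^{n-1}(-1)^{i+(n-1)}x_i$, which equals $x_n$ precisely because $\sum_{i=1}^n(-1)^i x_i\equiv0\pmod p$ — this is the one place the defining relation of $V_{p,n}^{\mathcal A}$ is used, so I would flag it explicitly.

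The computations are entirely routine telescoping sums, so there is no genuine obstacle; the only place that requires a moment's care is the last coordinate in the second composite, where one must remember that $a_n+a_1$ has to equal $x_n$ and that this is not automatic from the partial-sum bookkeeping but follows from membership in $V_{p,n}^{\mathcal A}$. I would present the argument as: (1) well-definedness of both maps, (2) the identity $T_{\mathcal A\to\mathcal R}\circ T_{\mathcal R\to\mathcal A}=\mathrm{id}$ via telescoping, (3) the identity $T_{\mathcal R\to\mathcal A}\circ T_{\mathcal A\to\mathcal R}=\mathrm{id}$ via telescoping plus the defining relation, and conclude bijectivity. (A cleaner but less self-contained alternative would be to note that $T_{\mathcal R\to\mathcal A}$ is an injective linear map between two $\mathbb Z_p$-modules of the same finite cardinality $p^{n-1}$, hence bijective, and identify the inverse afterwards; I would mention this as a remark but carry out the direct verification since it also pins down the explicit inverse.)
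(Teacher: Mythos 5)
Your proposal is correct and follows essentially the same route as the paper: a direct verification that both composites are the identity via telescoping sums, with the defining relation $\sum_{i=1}^n(-1)^ix_i\equiv 0\pmod p$ of $V_{p,n}^{\mathcal A}$ needed exactly once, for the last coordinate of $T_{\mathcal R\to\mathcal A}\circ T_{\mathcal A\to\mathcal R}$. Your explicit flagging of the well-definedness checks and of where that relation enters is slightly more careful than the paper's write-up, but the argument is the same.
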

 \begin{proof}
The equality $T_{\mathcal{A}\to \mathcal{R}} \circ T_{\mathcal{R}\to \mathcal{A}}= {\rm id}$ follows from 
\begin{align*}
&T_{\mathcal{A}\to \mathcal{R}} \circ T_{\mathcal{R}\to \mathcal{A}} \big( (a_1=0, a_2, \ldots ,  a_n) \big)\\
&= T_{\mathcal{A}\to \mathcal{R}} \big( (a_1+a_2, a_2 + a_3, \ldots ,a_{n-1}+a_n ,a_n + a_1)^{\mathcal{A}}\big) \\
&= \Big(0, \sum_{i=1}^{1} (-1)^{i+1} (a_i+a_{i+1}) , \sum_{i=1}^{2} (-1)^{i+2} (a_i+a_{i+1}), \\
&\hspace{5cm}\ldots , \sum_{i=1}^{n-1} (-1)^{i+(n-1)} (a_i+a_{i+1}) \Big)\\
&=(0, a_2, \ldots ,a_n),
\end{align*}	
and the equality $ T_{\mathcal{R}\to \mathcal{A}} \circ T_{\mathcal{A}\to \mathcal{R}} = {\rm id}$ follows from 
 \begin{align*}
& T_{\mathcal{R}\to \mathcal{A}} \circ T_{\mathcal{A}\to \mathcal{R}}\big( (x_1, x_2, \ldots , x_n)^{\mathcal{A}} \big) \\
&= T_{\mathcal{R}\to \mathcal{A}}  \Big( \Big(0, \sum_{i=1}^{1} (-1)^{i+1} x_i , \sum_{i=1}^{2} (-1)^{i+2} x_i , \ldots , \sum_{i=1}^{n-1} (-1)^{i+(n-1)} x_i \Big) \Big)\\
&=\Big(\sum_{i=1}^{1} (-1)^{i+1} x_i  , \sum_{i=1}^{1} (-1)^{i+1} x_i +  \sum_{i=1}^{2} (-1)^{i+2} x_i , \ldots , \\
&\hspace{2cm} \sum_{i=1}^{n-2} (-1)^{i+(n-2)} x_i +  \sum_{i=1}^{n-1} (-1)^{i+(n-1)} x_i,  \sum_{i=1}^{n-1} (-1)^{i+(n-1)} x_i  \Big)^\A\\
&=  (x_1, x_2, \ldots , x_n)^{\mathcal{A}}.
\end{align*}
 \end{proof}

 \begin{lemma}
 Let $\bm{a}, \bm{b} \in \{0\} \times \mathbb Z_p^{n-1}$ and $ \bm{x}^\mathcal{A}, \bm{y}^\mathcal{A} \in V_{p,n}^{\mathcal{A}}$ such that  $\bm{a} =  T_{\mathcal{A}\to \mathcal{R}} ( \bm{x}^\mathcal{A} )$ and $\bm{b} =  T_{\mathcal{A}\to \mathcal{R}} ( \bm{y}^\mathcal{A} )$. 
Then we have 
 $$
\bm{x}^\mathcal{A} {\sim}^\mathcal{A} \bm{y}^\mathcal{A} \Longleftrightarrow  \bm{a} \sim \bm{b}.
$$
 \end{lemma}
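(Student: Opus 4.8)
The plan is to realise both $T_{\mathcal{R}\to\mathcal{A}}$ and $T_{\mathcal{A}\to\mathcal{R}}$ as restrictions of a single ``un-normalised'' translation and to track how the generating moves of $\sim$ and $\sim^{\mathcal{A}}$ correspond under it. First I would extend $T_{\mathcal{R}\to\mathcal{A}}$ to a map $\widetilde{T}\colon \mathbb{Z}_p^n\to V_{p,n}^{\mathcal{A}}$ by $\widetilde{T}(a_1,\dots,a_n)=(a_1+a_2,a_2+a_3,\dots,a_n+a_1)^{\mathcal{A}}$; since $n$ is even one checks $\kappa_p^{\mathcal{A}}$ of the right-hand side vanishes, so $\widetilde{T}$ really lands in $V_{p,n}^{\mathcal{A}}$, and it agrees with $T_{\mathcal{R}\to\mathcal{A}}$ on $\{0\}\times\mathbb{Z}_p^{n-1}$. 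The key structural fact is that $\widetilde{T}$ is surjective and each of its fibres is a single orbit of (Op2): if $\widetilde{T}(\bm{a}')=\widetilde{T}(\bm{a})$ then $a_i'-a_i=(-1)^i s$ for a common $s\in\mathbb{Z}_p$, which is exactly the effect of one application of (Op2) with $a=a_1-s$. By the preceding lemma, $\{0\}\times\mathbb{Z}_p^{n-1}$ is a transversal for these fibres and $T_{\mathcal{A}\to\mathcal{R}}$ is the corresponding section.

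Next I would record, by four short computations, how $\widetilde{T}$ intertwines the moves: (Op1) on $\bm{a}$ becomes (Op1)$^{\mathcal{A}}$ on $\widetilde{T}(\bm{a})$; (Op2) on $\bm{a}$ fixes $\widetilde{T}(\bm{a})$ (it moves within a fibre); (Op3), once rewritten as $a_i\mapsto c-a_i$ for \emph{all} $i$ with $c=a_1+a$, becomes the reflection $x_i\mapsto 2c-x_i$, i.e. (Op2)$^{\mathcal{A}}$; and (Op4) becomes (Op3)$^{\mathcal{A}}$, where one uses $a_3-a_1=x_2-x_1$ to recognise the middle entry $2x_2-x_1$. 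Conversely each arc move lifts: (Op1)$^{\mathcal{A}}$ lifts to (Op1), (Op2)$^{\mathcal{A}}$ to a suitable (Op3), and (Op3)$^{\mathcal{A}}$ to (Op4). (When $n=2$ there is no (Op4) on the region side, and (Op1)$^{\mathcal{A}}$, (Op3)$^{\mathcal{A}}$ act trivially on $V_{p,2}^{\mathcal{A}}=\{(x,x)^{\mathcal{A}}\}$, so that case is immediate; one should also note in passing that the arc moves preserve the condition $\kappa_p^{\mathcal{A}}=0$, so $\sim^{\mathcal{A}}$ genuinely restricts to $V_{p,n}^{\mathcal{A}}$.)

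With these identities the implication ``$\bm{a}\sim\bm{b}\Rightarrow\bm{x}^{\mathcal{A}}\sim^{\mathcal{A}}\bm{y}^{\mathcal{A}}$'' is obtained by pushing a defining sequence $\bm{a}=\bm{d}_0\to\cdots\to\bm{d}_k=\bm{b}$ — which stays inside $\mathbb{Z}_p^n$ by Lemma~\ref{lem:dihedralpalette} — forward through $\widetilde{T}$: each step becomes one of (Op1)$^{\mathcal{A}}$, (Op2)$^{\mathcal{A}}$, (Op3)$^{\mathcal{A}}$ or a trivial step, and the endpoints map to $\bm{x}^{\mathcal{A}}=\widetilde{T}(\bm{a})$ and $\bm{y}^{\mathcal{A}}=\widetilde{T}(\bm{b})$. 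For the converse, take a defining sequence $\bm{x}^{\mathcal{A}}=\bm{z}_0^{\mathcal{A}}\to\cdots\to\bm{z}_k^{\mathcal{A}}=\bm{y}^{\mathcal{A}}$ in $V_{p,n}^{\mathcal{A}}$ and process it one step at a time: given $\bm{z}_{j-1}^{\mathcal{A}}\to\bm{z}_j^{\mathcal{A}}$, apply to $T_{\mathcal{A}\to\mathcal{R}}(\bm{z}_{j-1}^{\mathcal{A}})$ the region move matched to that step, then apply (Op2) to renormalise the first coordinate to $0$; by the transversal property the result is exactly $T_{\mathcal{A}\to\mathcal{R}}(\bm{z}_j^{\mathcal{A}})$, so consecutive preimages are $\sim$-related, and concatenating yields $\bm{a}\sim\bm{b}$.

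The substance of the argument lies entirely in the four move-correspondence computations — in particular getting the $(-1)^i$ signs in (Op2) and (Op3) to match the rotation/reflection moves on the arc side — and in the bookkeeping of the converse direction, where one must allow the region-side path to leave $\{0\}\times\mathbb{Z}_p^{n-1}$ and then use (Op2) to return to it; I do not expect any conceptual obstacle beyond this, and the surjectivity-with-(Op2)-fibres picture makes the renormalisation step completely routine.
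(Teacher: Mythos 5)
Your proposal is correct and rests on the same move-by-move dictionary the paper uses --- (Op1)$\leftrightarrow$(Op1)$^{\mathcal{A}}$, (Op3)$\leftrightarrow$(Op2)$^{\mathcal{A}}$, (Op4)$\leftrightarrow$(Op3)$^{\mathcal{A}}$, with (Op2) serving only to renormalise the first coordinate to $0$ --- so your packaging via the extended map $\widetilde{T}$ and its (Op2)-orbit fibres is essentially a cleaner organisation of the same computations. The only substantive difference is that you carry out both implications, whereas the paper proves only $\bm{x}^{\mathcal{A}}\sim^{\mathcal{A}}\bm{y}^{\mathcal{A}}\Rightarrow\bm{a}\sim\bm{b}$ and explicitly leaves the converse (which it never uses) to the reader.
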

 \begin{proof}
Here we only show that $\bm{x}^\mathcal{A} {\sim}^\mathcal{A} \bm{y}^\mathcal{A} \Longrightarrow  \bm{a} \sim \bm{b}$ and leave the proof of $\bm{x}^\mathcal{A} {\sim}^\mathcal{A} \bm{y}^\mathcal{A} \Longleftarrow  \bm{a} \sim \bm{b}$ which is not used in this paper, to the reader. It suffices to show that when we suppose that $\bm{x}^\A$ and $\bm{y}^\A$ are related by one of the operations (Op1)$^\A$--(Op3)$^\A$ in Definition~\ref{def:Aequiv}, we have $\bm{a} \sim \bm{b}$.  

Suppose that $\bm{y}^\A$ is obtained from $\bm{x}^A$ by (Op1)$^\A$, that is, $\bm{y}^\A = (x_2, \ldots ,x_n, x_1)^\A$. 
We then have 
\begin{align*}
\bm{b} &=&&  T_{\mathcal{A}\to \mathcal{R}} \big( (x_2, \ldots ,x_n, x_1)^\A \big)\\
&=&&\Big(0, \sum_{i=2}^{2} (-1)^{i+2} x_i , \sum_{i=2}^{3} (-1)^{i+3} x_i , \ldots , \sum_{i=2}^{n} (-1)^{i+n} x_i \Big)\\
&\overset{({\rm Op2})}{\longrightarrow} &&
\Big(x_1, \sum_{i=2}^{2} (-1)^{i+2} x_i+ (-1)^2(0-x_1) , \\
&&& \hspace{3cm}\ldots , \sum_{i=2}^{n} (-1)^{i+n} x_i+ (-1)^n(0-x_1) \Big)\\
&=&&\Big(\sum_{i=1}^{1} (-1)^{i+1} x_i, \sum_{i=1}^{2} (-1)^{i+2} x_i, \ldots , \sum_{i=1}^{n} (-1)^{i+n} x_i\Big)\\
&=&&\Big(\sum_{i=1}^{1} (-1)^{i+1} x_i, \sum_{i=1}^{2} (-1)^{i+2} x_i, \ldots , \sum_{i=1}^{n-1} (-1)^{i+(n-1)} x_i, 0 \Big)\\
&\overset{({\rm Op1})^{-1}}{\longrightarrow} &&\Big(0, \sum_{i=1}^{1} (-1)^{i+1} x_i, \sum_{i=1}^{2} (-1)^{i+2} x_i, \ldots , \sum_{i=1}^{n-1} (-1)^{i+(n-1)} x_i\Big) \\
&=&& T_{\mathcal{A}\to \mathcal{R}} \big( (x_1, x_2, \ldots ,x_n)^\A \big)\\
& =&& \bm{a},
\end{align*}
which implies that $\bm{a} \sim \bm{b}$. 

Suppose that $\bm{y}^\A$ is obtained from $\bm{x}^A$ by (Op2)$^\A$, that is, $\bm{y}^\A = (2x-x_1, 2x-x_2, \ldots, 2x-x_n)^\A$ for some $x \in \mathbb Z _p$.
We then have 
\begin{align*}
\bm{b} &=&&  T_{\mathcal{A}\to \mathcal{R}} \big((2x-x_1, 2x-x_2, \ldots, 2x-x_n)^\A\big)\\
&=&&  \Big(0, \sum_{i=1}^{1} (-1)^{i+1} (2x-x_i) ,\ldots , \sum_{i=1}^{n-1} (-1)^{i+(n-1)} (2x-x_i) \Big)\\
&\overset{({\rm Op2})}{\longrightarrow} &&
\Big(x, \sum_{i=1}^{1} (-1)^{i+1} (2x-x_i) +(-1)^2 (0-x) , \\
&&&\hspace{3cm }\ldots , \sum_{i=1}^{n-1} (-1)^{i+(n-1)} (2x-x_i) +(-1)^n (0-x) \Big)\\
&=&&\Big(x,  -\sum_{i=1}^{1} (-1)^{i+1} x_i +x , \ldots , -\sum_{i=1}^{n-1} (-1)^{i+(n-1)} x_i +x \Big)\\
\end{align*}
\begin{align*}
\phantom{b}&\overset{({\rm Op3})}{\longrightarrow}  &&\Big(0, x- \Big(  -\sum_{i=1}^{1} (-1)^{i+1} x_i +x\Big) +0 ,\\
&&& \hspace{3cm } \ldots ,x- \Big(-\sum_{i=1}^{n-1} (-1)^{i+(n-1)} x_i +x \Big) +0 \Big)\\
&=&& \Big(0,\sum_{i=1}^{1} (-1)^{i+1} x_i  , \ldots ,\sum_{i=1}^{n-1} (-1)^{i+(n-1)} x_i \Big)\\
&=&& T_{\mathcal{A}\to \mathcal{R}} \big( (x_1, x_2, \ldots ,x_n)^\A \big)\\
& =&& \bm{a},
\end{align*}
which implies that $\bm{a} \sim \bm{b}$.

Suppose that $\bm{y}^\A$ is obtained from $\bm{x}^A$ by (Op3)$^\A$, that is, $\bm{y}^\A = (x_2, 2x_2-x_1,  x_3, \ldots, x_n)^\A$.
We then have 
\begin{align*}
\bm{b} &=&&  T_{\mathcal{A}\to \mathcal{R}} \big( (x_2, 2x_2-x_1,  x_3, \ldots, x_n)^\A \big)\\
&=&& \Big(0, x_2, -x_2+( 2x_2 -x_1 ) ,  \sum_{i=3}^{3} (-1)^{i+3} x_i + (-1)^{2+3} ( 2x_2 -x_1 ) +  (-1)^{1+3} x_2  ,\\
&&& \ldots ,  \sum_{i=3}^{n-1} (-1)^{i+(n-1)} x_i + (-1)^{2+(n-1)} ( 2x_2 -x_1 ) +  (-1)^{1+(n-1)} x_2  \Big)\\
&=&& \Big(0, x_2, x_2 -x_1 ,  \sum_{i=1}^{3} (-1)^{i+3} x_i ,  \ldots ,  \sum_{i=1}^{n-1} (-1)^{i+(n-1)} x_i \Big)\\
&\overset{({\rm Op4})^{-1}}{\longrightarrow}&& \Big(0, 0+x_2-(x_2 -x_1), x_2 -x_1 ,  \sum_{i=1}^{3} (-1)^{i+3} x_i ,  \ldots ,  \sum_{i=1}^{n-1} (-1)^{i+(n-1)} x_i \Big)\\
&=&& \Big(0, x_1, x_2 -x_1 ,  \sum_{i=1}^{3} (-1)^{i+3} x_i ,  \ldots ,  \sum_{i=1}^{n-1} (-1)^{i+(n-1)} x_i \Big)\\
&=&& T_{\mathcal{A}\to \mathcal{R}} \big( (x_1, x_2, \ldots ,x_n)^\A \big)\\
& =&& \bm{a},
\end{align*}
which implies that $\bm{a} \sim \bm{b}$. 
 \end{proof}
\begin{remark}
The following result can be easily shown from the definition of $T_{\mathcal{R}\to \mathcal{A}}$, Theorem~\ref{main}, and Theorem~\ref{Amain}.

(1) When $p$ is an odd integer, for $\bm{a}=(a_1, a_2) \in U_{p,2}$ such that $a_1=0$, it holds 
\[
T_{\mathcal{R}\to \mathcal{A}}(\bm{a}) \in U_{p,2}^{\A}.
\]

(2) For $\bm{a}=(a_1, a_2) \in \eta_{\varepsilon}$ such that $a_1=0$, it holds 
\[
T_{\mathcal{R}\to \mathcal{A}}(\bm{a}) \in 
\begin{cases}
\eta_{\mu=\mu_p({\bm{a}})}^{\A} &\mbox{ if $\frac{p}{2}$ is odd},\\
\eta_{\tau=p, \varepsilon}^{\A}  &\mbox{ if $\frac{p}{2}$ is even}.
\end{cases}
\]
   
(3) For  $\bm{a}=(a_1, a_2, \ldots, a_n) \in \delta_{\tau}$ such that $a_1=0$, it holds 
\[
T_{\mathcal{R}\to \mathcal{A}}(\bm{a}) \in \delta_{\tau}^{\A}.
\]

(4) For  $\bm{a}=(a_1, a_2, \ldots, a_n) \in \alpha_{\tau,\mu}$ such that $a_1=0$, it holds 
\[
T_{\mathcal{R}\to \mathcal{A}}(\bm{a}) \in \alpha_{\kappa=0,\tau,\mu}^{\A}.
\]

(5) For $\bm{a}=(a_1, a_2, \ldots, a_n) \in \beta_{\tau,\varepsilon}$ such that $a_1=0$, it holds 
\[
T_{\mathcal{R}\to \mathcal{A}}(\bm{a}) \in \beta_{\tau,\varepsilon}^{\A}.
\]

(6) For $\bm{a}=(a_1, a_2, \ldots, a_n) \in \gamma_{\tau,\varepsilon,\mu}$ such that $a_1=0$, it holds 
\[
T_{\mathcal{R}\to \mathcal{A}}(\bm{a}) \in \gamma_{\kappa=0,\tau,\varepsilon,\mu}^{\A}.
\]
\end{remark}

We then have the following lemma for each equivalence class of $U_p$.

\begin{lemma}\label{element}
(1) When $p$ is an odd integer, for $\bm{a} \in U_{p,2}$, it holds 
\[\bm{a}  \sim (0,0).\]

(2) For $\bm{a} \in \eta_{\varepsilon}$, it holds 
\[\bm{a}  \sim \begin{cases}
(0,0) & {\rm if \ } \varepsilon=0,\\
(0,1) & {\rm if \ } \varepsilon=1.\end{cases}
\]
 
(3) For  $\bm{a} \in \delta_{\tau}$, it holds 
\[\bm{a}  \sim (0,0, \tau,-\tau, 2\tau,-2\tau, \ldots,\frac{n-2}{2}\tau, -\frac{n-2}{2}\tau).\]

(4) For  $\bm{a} \in \alpha_{\tau, \mu}$, it holds 
\[\bm{a}  \sim \begin{cases}
(0,0,\tau,-\tau,2\tau,-2\tau,\ldots,\frac{n+\mu-2}{2}\tau,-\frac{n+\mu-2}{2}\tau,\underbrace{0,\tau,\ldots,0,\tau}_{-\mu})  & {\rm if \ } -n<\mu \leq 0,
\\
(0,0,\tau,-\tau,2\tau,-2\tau,\ldots,\frac{n-\mu-2}{2}\tau,-\frac{n-\mu-2}{2}\tau,\underbrace{0,\ldots,0}_{\mu})  & {\rm if \ } 0<\mu < n.\end{cases}\]
 
(5) For  $\bm{a} \in \beta_{\tau, \varepsilon}$, it holds 
\[\bm{a}  \sim \begin{cases}
(0,0, \tau,-\tau, 2\tau,-2\tau, \ldots,\frac{n-2}{2}\tau, -\frac{n-2}{2}\tau) & {\rm if \ } \varepsilon=0,\\
(0,1,\tau,1-\tau,2\tau,1-2\tau,\ldots,\frac{n-2}{2}\tau,1-\frac{n-2}{2}\tau)& {\rm if \ } \varepsilon=1.\end{cases}\]

(6) For  $\bm{a} \in \gamma_{\tau, \varepsilon, \mu}$, it holds 
\[\bm{a}  \sim \begin{cases}
(0,0,\tau,-\tau,2\tau,-2\tau,\ldots,\frac{n-\mu-2}{2}\tau,-\frac{n-\mu-2}{2}\tau,\underbrace{0,\ldots,0}_{\mu})  & {\rm if \ } \varepsilon=0,\\
(0,1,\tau,1-\tau,2\tau,1-2\tau,\ldots,\frac{n-\mu-2}{2}\tau,1-\frac{n-\mu-2}{2}\tau,\underbrace{0,1,\ldots,0,1}_{\mu}) & {\rm if \ } \varepsilon=1.\end{cases}\]
\end{lemma}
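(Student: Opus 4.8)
\textbf{Proof proposal for Lemma~\ref{element}.}

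The plan is to transport everything through the bijection $T_{\mathcal{A}\to\mathcal{R}}$ (equivalently $T_{\mathcal{R}\to\mathcal{A}}$) and reduce each case to the corresponding statement of Lemma~\ref{Alemma}, which is already available. First I would fix a representative $\bm{a}$ of the given equivalence class of $U_{p,n}$. By (Op2) I may normalize so that $a_1=0$; this places $\bm{a}$ in $\{0\}\times\mathbb Z_p^{n-1}$, to which $T_{\mathcal{R}\to\mathcal{A}}$ applies, and by the Remark following the ``inverse'' lemma the image $\bm{x}^{\mathcal{A}}=T_{\mathcal{R}\to\mathcal{A}}(\bm{a})$ lies in the $\mathcal{A}$-palette class with $\kappa=0$ matching the $\mathcal{R}$-class of $\bm{a}$ (e.g.\ $\alpha_{\tau,\mu}\mapsto\alpha^{\mathcal{A}}_{\kappa=0,\tau,\mu}$, $\beta_{\tau,\varepsilon}\mapsto\beta^{\mathcal{A}}_{\tau,\varepsilon}$, etc.). Then Lemma~\ref{Alemma} gives an explicit $\mathcal{A}$-representative $\bm{x}^{\mathcal{A}}_0$ of that class, and since the two equivalence relations correspond under $T$ (the ``$\bm{x}^{\mathcal{A}}\sim^{\mathcal{A}}\bm{y}^{\mathcal{A}}\Longrightarrow\bm{a}\sim\bm{b}$'' direction of the preceding lemma), we get $\bm{a}\sim T_{\mathcal{A}\to\mathcal{R}}(\bm{x}^{\mathcal{A}}_0)$.

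The remaining work is purely computational: apply $T_{\mathcal{A}\to\mathcal{R}}$ to each explicit $\mathcal{A}$-representative listed in Lemma~\ref{Alemma} and check that the output is (up to the obvious reindexing) the $\mathcal{R}$-tuple claimed in Lemma~\ref{element}. Concretely, for part (4) I would feed $(0,\tau,0,\tau,\ldots,0,\frac{2-n-\mu}{2}\tau,\underbrace{\tau,\ldots,\tau}_{-\mu})^{\mathcal{A}}$ into $T_{\mathcal{A}\to\mathcal{R}}$ and compute the partial alternating sums $\sum_{i=1}^{j}(-1)^{i+j}x_i$; because the $\mathcal{A}$-entries alternate between a fixed even-index value and $0$ on odd indices, these partial sums telescope into the arithmetic-progression pattern $0,0,\tau,-\tau,2\tau,-2\tau,\ldots$ with the appropriate tail of $-\mu$ copies of $(0,\tau)$. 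The same mechanism handles (1)--(3), (5), (6); the $\varepsilon=1$ cases simply add a constant $1$ to the even-indexed entries of the $\mathcal{A}$-representative, which passes through $T_{\mathcal{A}\to\mathcal{R}}$ as the shift $0,1,\tau,1-\tau,2\tau,1-2\tau,\ldots$ visible in the statement. For the $n=2$ cases one may alternatively argue directly from the operations, or invoke parts (1)--(3) of Lemma~\ref{Alemma} via $T$.

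The only real subtlety—rather than a genuine obstacle—is bookkeeping: matching the cyclic/indexing conventions between the $\mathcal{A}$-side formulas of Lemma~\ref{Alemma} and the $\mathcal{R}$-side formulas here, and in the $-n<\mu\le 0$ versus $0<\mu<n$ split of (4) keeping track of which block of the tuple receives the $\tau$'s versus the $0$'s. I would organize the proof by first recording the general identity
\[
T_{\mathcal{A}\to\mathcal{R}}\big((0,\tau,0,\tau,\ldots)^{\mathcal{A}}\big)=(0,0,\tau,-\tau,2\tau,-2\tau,\ldots)
\]
(and its $+1$-shifted analogue), proved once by the telescoping computation, and then read off all six cases as instances, inserting the length-$\mu$ or length-$(-\mu)$ constant tails by the same computation applied to the tail. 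A closing sentence noting that the $p=2$ case is covered by the Remark after Lemma~\ref{Alemma} completes the argument.
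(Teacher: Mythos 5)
Your proposal follows essentially the same route as the paper's proof: normalize $a_1=0$ by (Op2), push through $T_{\mathcal{R}\to\mathcal{A}}$ into the corresponding $\kappa=0$ class, invoke Lemma~\ref{Alemma} for the explicit $\mathcal{A}$-representative, and pull back via $T_{\mathcal{A}\to\mathcal{R}}$ using the proved direction of the equivalence-transport lemma. The remaining telescoping computation you describe is exactly what the paper carries out case by case, so the argument is correct as outlined.
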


\begin{proof}

(1) For $\bm{a} \in U_{p,2}$,
\begin{align*}
&&&\bm{a}=(a_1, a_2)\\
 &\overset{({\rm Op2})}{\longrightarrow}&&(0,a_1+a_2)\\
&\overset{T_{\mathcal{R}\to \mathcal{A}}}{\longrightarrow}&&(a_1+a_2,a_1+a_2)^{\A} \\
&\widesim[4]{Lem.\ref{Alemma}{\A}} & &(0,0)^{\A} \\
&\overset{T_{\mathcal{A}\to \mathcal{R}}}{\longrightarrow}&&(0,0).
\end{align*}

(2) For $\bm{a} \in \eta_{\varepsilon}$,
 \begin{align*}
&&&\bm{a}=(a_1, a_2)\\
 &\overset{({\rm Op2})}{\longrightarrow}&&(0,a_1+a_2)\\
&\overset{T_{\mathcal{R}\to \mathcal{A}}}{\longrightarrow}&&(a_1+a_2,a_1+a_2)^{\A} \in
\begin{cases}
\eta_{\mu=\mu_p(\bm{a})} & \mbox{if $\frac{p}{2}$ is odd}\\
\eta_{\tau=p,\varepsilon} & \mbox{if $\frac{p}{2}$ is even}
\end{cases} \\
&\widesim[4]{Lem.\ref{Alemma}{\A}} & &\begin{cases}
(0,0)^{\A} \mbox{ if } (a_1+a_2,a_1+a_2)^{\A} \in \eta_2 \\
(1,1)^{\A} \mbox{ if } (a_1+a_2,a_1+a_2)^{\A} \in \eta_{-2} \\
(0,0)^{\A} \mbox{ if } (a_1+a_2,a_1+a_2)^{\A} \in \eta_{\tau=p,\varepsilon=0} \\
(1,1)^{\A} \mbox{ if } (a_1+a_2,a_1+a_2)^{\A} \in \eta_{\tau=p,\varepsilon=1} 
\end{cases}\\
&\overset{(\star)}{=}&&\begin{cases}
(0,0)^{\A} & \mbox{ if $\varepsilon=0$}\\
(1,1)^{\A} & \mbox{ if $\varepsilon=1$}
\end{cases} \\
&\overset{T_{\mathcal{A}\to \mathcal{R}}}{\longrightarrow}&&\begin{cases}
(0,0) & \mbox{ if $\varepsilon=0$}\\
(0,1) &\mbox{ if $\varepsilon=1$},
\end{cases} 
\end{align*}
where $\overset{(\star)}{=}$ holds since when $\varepsilon=0$, $(a_1+a_2,a_1+a_2)^{\A} \in \eta_2$ and when $\varepsilon=1$, $(a_1+a_2,a_1+a_2)^{\A} \in \eta_{-2}$.

(3)  For  $\bm{a} \in \delta_{\tau}$,
\begin{align*}
&&&\bm{a}=(a_1, a_2, \ldots, a_n)\\
 &\overset{({\rm Op2})}{\longrightarrow}&&(0,a_2+a_1,a_3-a_1,\ldots,a_{n-1}-a_1,a_n+a_1)\\
&\overset{T_{\mathcal{R}\to \mathcal{A}}}{\longrightarrow}&&(a_1+a_2,a_2+a_3,\ldots,a_{n-1}+a_n,a_n+a_1)^{\A} \\
&\widesim[4]{Lem.\ref{Alemma}{\A}} & &(0, \tau, 0, \tau, \ldots, 0, \frac{2-n}{2}\tau)^{\A} \\
&\overset{T_{\mathcal{A}\to \mathcal{R}}}{\longrightarrow}&&(0,0, \tau,-\tau, 2\tau,-2\tau, \ldots,\frac{n-2}{2}\tau, -\frac{n-2}{2}\tau).
\end{align*}

(4) For  $\bm{a} \in \alpha_{\tau, \mu}$,
\begin{align*}
&&&\bm{a}=(a_1, a_2, \ldots, a_n)\\
 &\overset{({\rm Op2})}{\longrightarrow}&&(0,a_2+a_1,a_3-a_1,\ldots,a_{n-1}-a_1,a_n+a_1)\\
&\overset{T_{\mathcal{R}\to \mathcal{A}}}{\longrightarrow}&&(a_1+a_2,a_2+a_3,\ldots,a_{n-1}+a_n,a_n+a_1)^{\A} \\
&\widesim[4]{Lem.\ref{Alemma}{\A}} & &\begin{cases}
(0, \tau, 0, \tau, \ldots, 0, \frac{2-n-\mu}{2}\tau,\underbrace{\tau,\ldots,\tau}_{-\mu})^{\A}  & {\rm if \ } -n<\mu \leq 0,\\
(0, \tau, 0, \tau, \ldots, 0, \frac{2-n+\mu}{2}\tau,\underbrace{0,\ldots,0}_{\mu}
)^{\A}  & {\rm if \ } 0<\mu < n
\end{cases} \\
&\overset{T_{\mathcal{A}\to \mathcal{R}}}{\longrightarrow}&&\begin{cases}
(0,0,\tau,-\tau,\ldots,\frac{n+\mu-2}{2}\tau,-\frac{n+\mu-2}{2}\tau,\underbrace{0,\tau,\ldots,0,\tau}_{-\mu})  & {\rm if \ } -n<\mu \leq 0,
\\
(0,0,\tau,-\tau,\ldots,\frac{n-\mu-2}{2}\tau,-\frac{n-\mu-2}{2}\tau,\underbrace{0,\ldots,0}_{\mu})  & {\rm if \ } 0<\mu < n.
\end{cases}
\end{align*}

(5) For  $\bm{a} \in \beta_{\tau, \varepsilon}$,
\begin{align*}
&&&\bm{a}=(a_1, a_2, \ldots, a_n)\\
 &\overset{({\rm Op2})}{\longrightarrow}&&(0,a_2+a_1,a_3-a_1,\ldots,a_{n-1}-a_1,a_n+a_1)\\
&\overset{T_{\mathcal{R}\to \mathcal{A}}}{\longrightarrow}&&(a_1+a_2,a_2+a_3,\ldots,a_{n-1}+a_n,a_n+a_1)^{\A} \\
&\widesim[4]{Lem.\ref{Alemma}{\A}} & &\begin{cases}
(0,\tau,0,\tau,\ldots,0,\frac{2-n}{2}\tau)^{\A} & {\rm if \ } \varepsilon=0,\\
(1,\tau+1,1,\tau+1,\ldots,1,\frac{2-n}{2}\tau+1)^{\A}& {\rm if \ } \varepsilon=1\end{cases} \\
&\overset{T_{\mathcal{A}\to \mathcal{R}}}{\longrightarrow}&&\begin{cases}
(0,0, \tau,-\tau, 2\tau,-2\tau, \ldots,\frac{n-2}{2}\tau, -\frac{n-2}{2}\tau) & {\rm if \ } \varepsilon=0,\\
(0,1,\tau,1-\tau,2\tau,1-2\tau,\ldots,\frac{n-2}{2}\tau,1-\frac{n-2}{2}\tau)& {\rm if \ } \varepsilon=1.
\end{cases}
\end{align*}

(6) For  $\bm{a} \in \gamma_{\tau, \varepsilon, \mu}$,
\begin{align*}
&&&\bm{a}=(a_1, a_2, \ldots, a_n)\\
 &\overset{({\rm Op2})}{\longrightarrow}&&(0,a_2+a_1,a_3-a_1,\ldots,a_{n-1}-a_1,a_n+a_1)\\
&\overset{T_{\mathcal{R}\to \mathcal{A}}}{\longrightarrow}&&(a_1+a_2,a_2+a_3,\ldots,a_{n-1}+a_n,a_n+a_1)^{\A} \\
&\widesim[4]{Lem.\ref{Alemma}{\A}} & &\begin{cases}
(0,\tau,0,\tau,\ldots,0,\frac{2-n+\mu}{2}\tau,\underbrace{0,\ldots,0}_{\mu})^{\A} & {\rm if \ } \varepsilon=0,\\
(1,\tau+1,1,\tau+1,\ldots,1,\frac{2-n+\mu}{2}\tau+1,\underbrace{1,\dots,1}_{\mu})^{\A}& {\rm if \ } \varepsilon=1\end{cases} \\
&\overset{T_{\mathcal{A}\to \mathcal{R}}}{\longrightarrow}&&\begin{cases}
(0,0,\tau,-\tau,2\tau,-2\tau,\ldots,\frac{n-\mu-2}{2}\tau,-\frac{n-\mu-2}{2}\tau,\underbrace{0,\ldots,0}_{\mu})  & {\rm if \ } \varepsilon=0,\\
(0,1,\tau,1-\tau,2\tau,1-2\tau,\ldots,\frac{n-\mu-2}{2}\tau,1-\frac{n-\mu-2}{2}\tau,\underbrace{0,1,\ldots,0,1}_{\mu}) & {\rm if \ } \varepsilon=1.\end{cases}
\end{align*}

\end{proof}

\begin{lemma}\label{keylemma1}
(1) Let $p$ be an even integer. We have
\[
U_{p,2}=  \eta_0 \sqcup \eta_1,
\]
that is, $U_{p,2}$ can be split into the disjoint sets $\eta_0$ and $\eta_1$ defined in Theorem~\ref{main} by $\varepsilon_p$.

(2) Let $n$ be an even integer greater than 2 and $p$ an even integer. Then, $U_{p,n}$ is equal to 
$$\Bigg( \bigsqcup_{\tiny
\begin{minipage}{3cm}
$\tau \in \{1,\ldots,p\} 
\mbox{ s.t. } (\tau|p) \\ \wedge \big(\tau \equiv 1 {\rm \  (mod \ 2)} \big),\\
\mu \in \mathbb{Z} \mbox{ s.t. } (-n<\mu<n) \wedge\\ \big(\mu \equiv 0 {\rm \  (mod \ 2)}\big) \wedge \\ \big(\frac{n-|\mu|}{2} \equiv 0 {\rm \  (mod \ 2)}\big)$
\end{minipage}
} \hspace{-1em}\alpha_{\tau, \mu} \Bigg)
\sqcup
\Bigg( \bigsqcup_{\tiny
\begin{minipage}{3cm}
$\tau \in \{1,\ldots,p\} \mbox{ s.t. } (\tau|p) \\
\wedge \big(\tau \equiv 0 {\rm \  (mod \ 2)}\big) \wedge \\\big( \frac{p}{\tau} \equiv 1 {\rm \  (mod \ 2)}\big),\\
\varepsilon \in \{0,1\}$
\end{minipage}
}  \hspace{-1em}\beta_{\tau, \varepsilon} \Bigg)
\sqcup
\Bigg( \bigsqcup_{\tiny
\begin{minipage}{3cm}
$\tau \in \{1,\ldots,p\} \mbox{ s.t. } (\tau|p) \\ \wedge \big(\tau \equiv 0 {\rm \  (mod \ 2)}\big)\\  \wedge \big( \frac{p}{\tau} \equiv 0 {\rm \  (mod \ 2)}\big),\\
\varepsilon \in \{0,1\},\\
\mu \in \mathbb{Z} \mbox{ s.t. } (0 \leq \mu<n) \\ \wedge \big(\mu \equiv 0 {\rm \  (mod \ 2)}\big) \\ \wedge \big(\frac{n-|\mu|}{2} \equiv 0 {\rm \  (mod \ 2)}\big)$
\end{minipage}
}  \hspace{-1em}\gamma_{\tau, \varepsilon, \mu} \Bigg)
,$$
 that is, $U_{p,n}$ can be split into the disjoint sets $\alpha_{\tau, \mu}$s, $\beta_{\tau, \varepsilon}$s, and $\gamma_{\tau, \varepsilon, \mu}$s defined in Theorem~\ref{main} by $\tau_p$, $\varepsilon_p$, $\mu_p$, and $\mu_{p, \tau}$.

\end{lemma}

\begin{proof}
(1) By the definition of $\varepsilon_p$, it is clear that $U_{p,2}=  \eta_0 \sqcup \eta_1  \sqcup \eta_\infty$, and hence, it suffices to show that $\eta_\infty =\emptyset$, where $\eta_\infty =\{\bm{a} \in U_{p,2} \mid \varepsilon_p(\bm{a})=\infty \}$. 
For $\bm{a} = (a_1, a_2)\in U_{p,2}$, since $a_1+ a_2 \equiv a_2 + a_1 \equiv 0 \pmod{2}$ or $a_1+ a_2 \equiv a_2 + a_1 \equiv 1 \pmod{2}$ holds, $\varepsilon_p(\bm{a})=0$ or $1$, which implies $\eta_\infty =\emptyset$. 

(2) 
By the definition of $\tau_p$, $U_{p,n}$ can be split as 
$$\Bigg( \bigsqcup_{\tiny
\begin{minipage}{3cm}
$\tau \in \{1,\ldots,p\} 
\mbox{ s.t. } (\tau|p) \\ \wedge \big(\tau \equiv 1 {\rm \  (mod \ 2)} \big),\\
$
\end{minipage}
} \hspace{-1.5em}\alpha_{\tau} \Bigg)
\sqcup
\Bigg( \bigsqcup_{\tiny
\begin{minipage}{3cm}
$\tau \in \{1,\ldots,p\} \mbox{ s.t. } (\tau|p) \\
\wedge \big(\tau \equiv 0 {\rm \  (mod \ 2)}\big) \wedge \\\big( \frac{p}{\tau} \equiv 1 {\rm \  (mod \ 2)}\big),\\
$
\end{minipage}
}  \hspace{-1.5em}\beta_{\tau} \Bigg)
\sqcup
\Bigg( \bigsqcup_{\tiny
\begin{minipage}{3cm}
$\tau \in \{1,\ldots,p\} \mbox{ s.t. } (\tau|p) \\ \wedge \big(\tau \equiv 0 {\rm \  (mod \ 2)}\big)\\  \wedge \big( \frac{p}{\tau} \equiv 0 {\rm \  (mod \ 2)}\big),\\
$
\end{minipage}
}  \hspace{-1.5em}\gamma_{\tau} \Bigg)
,$$
where 
\begin{align*}
&\alpha_{\tau}=\{\bm{a} \in U_{p,n} \mid \tau_p(\bm{a})=\tau\},\\
&\beta_{\tau}=\{\bm{a} \in U_{p,n} \mid \tau_p(\bm{a})=\tau\}, \mbox{and}\\
&\gamma_{\tau}=\{\bm{a} \in U_{p,n} \mid \tau_p(\bm{a})=\tau \}.
\end{align*}

We now focus on $\alpha_\tau$ for some $\tau$. 
For $\bm{a}=(a_1 ,\ldots , a_n) \in \alpha_\tau$, by the definition of $\mu_p$, it is clear that $\mu_p (\bm{a}) \equiv 0 \pmod{2}$ and $-n\leq \mu_p (\bm{a})\leq n$. 
When we assume that $\mu_p (\bm{a})= -n$  or $n$, it holds that 
$$
a_1+a_2 \equiv \cdots = a_n + a_1 \pmod{2}.
$$
This implies $\tau_p (\bm{a})$ is an even number, which contradicts the condition of $\tau$ for $\alpha_\tau$. Thus we have $\mu_p (\bm{a})\not= \pm n$. 
Besides, since  $O\big((a_1+ a_2,  \ldots , a_n+a_1)\big)$, that is the number of odd entries in $(a_1+ a_2, \ldots , a_n+a_1)$, is even, $O\big((a_1+ a_2,  \ldots , a_n+a_1)\big)=2m$ for some integer $m>0$.
We then have $\mu_p(\bm{a})= (n-2m) -2m=n-4m$ and 
\begin{align*}
\frac{n-|\mu_p(\bm{a})|}{2} = \frac{n-|n-4m|}{2} = 2m \mbox{ or } n-2m \equiv 0 \pmod{2},
\end{align*}
where we note that $n$ is even.
Therefore, we have  
\[
\alpha_\tau = \bigsqcup_{\tiny
\begin{minipage}{2.9cm}
$\mu \in \mathbb{Z} \mbox{ s.t. } (-n<\mu<n) \wedge \big(\mu \equiv 0 {\rm \  (mod \ 2)}\big) \wedge \big(\frac{n-|\mu|}{2} \equiv 0 {\rm \  (mod \ 2)}\big)$
\end{minipage}
} \hspace{-1em}\alpha_{\tau, \mu}. 
\]

Next we focus on $\beta_\tau$ for some $\tau$. 
For $\bm{a}=(a_1 ,\ldots , a_n) \in \beta_\tau$, since $\tau \equiv 0 \pmod{2}$, we have 
$a_1+a_2 \equiv \cdots \equiv a_n+a_1 \pmod{2}$.
This implies that $\varepsilon_p(\bm{a}) =0$ or $1$, and hence, $\varepsilon_p(\bm{a})\not= \infty$.
Therefore, we have  
\[
\beta_\tau =\bigsqcup_{\tiny
\begin{minipage}{2cm}
\begin{center}
$\varepsilon \in \{0,1\}$
\end{center}
\end{minipage}
}  \hspace{-1em}\beta_{\tau, \varepsilon}.
\]

Finally, let us consider $\gamma_\tau$ for some $\tau$.
For ${\bm{a}} \in \gamma_\tau$, by the same reason as in the case of $\beta_\tau$, we have $\varepsilon_p(\bm{a})\not = \infty$.
Besides, by the definition of $\mu_{p,\tau}$ and the property of $\mu_{\frac{p}{\tau}}$ mentioned in the case of $\alpha_\tau$, we have $0\leq |\mu_{p,\tau} (\bm{a})| <n$, $ \mu_{p,\tau} (\bm{a}) \equiv 0\pmod{2}$, and $\frac{n-|\mu_{p,\tau}(\bm{a})|}{2}  \equiv 0 \pmod{2}$.
Therefore, we have  
\[
\gamma_\tau =\bigsqcup_{\tiny
\begin{minipage}{3cm}
$
\varepsilon \in \{0,1\},\\
\mu \in \mathbb{Z} \mbox{ s.t. } (0 \leq \mu<n) \\ \wedge \big(\mu \equiv 0 {\rm \  (mod \ 2)}\big) \\ \wedge \big(\frac{n-|\mu|}{2} \equiv 0 {\rm \  (mod \ 2)}\big)
$
\end{minipage}
}  \hspace{-1.5em}\gamma_{\tau,\varepsilon,\mu}
\]

As a consequence, we have the equality of (2) of this lemma.
\end{proof}

\noindent{\bf Proof of Theorem~\ref{main}.} 
First, we remark that each equivalence class is distinguished from the others by using the maps $\varepsilon_p$, $\tau_p$, $\mu_p$, and $\mu_{p,\tau}$. 

In the case that $n=2$ and $p$ is an odd integer, by (1) of  Lemma~\ref{element}, there exists a unique equivalence class, which is $U_{p,2}$. Thus we have the equality of (i) of (1).

In the case that  $n=2$ and $p$ is an even integer, by Lemma~\ref{keylemma1}, we have $U_{p,2} = \eta_0 \sqcup \eta_1$, that is, $U_{p,2}$ can be split into the disjoint sets $\eta_0$ and $\eta_1$ by $\varepsilon_p$. Besides, (2) of Lemma~\ref{element} implies that each of $\eta_0$ and $\eta_1$ is an equivalence class. Thus we have two equivalence classes $\eta_0$ and $\eta_1$, and we have the equality of (ii) of (1).

In the case that  $n$ is an even integer greater than $2$ and $p$ is an odd integer, by the definition of $\tau_p$, we have $U_{p,n} = \bigsqcup_{\tau \in \{1, \ldots , p\} \mbox{ s.t. } \tau|p } \delta_\tau$, that is, $U_{p,n}$ can be split into the disjoint sets $\delta_\tau$s by $\tau_p$. Besides, (3) of Lemma~\ref{element} implies that each of $\delta_\tau$s is an equivalence class. Thus we have the equality of (i) of (2).

In the case that  $n$ is an even integer greater than $2$ and $p$ is an even integer, by Lemma~\ref{keylemma1}, $U_{p,n} $ is equal to  
$$\Bigg( \bigsqcup_{\tiny
\begin{minipage}{3cm}
$\tau \in \{1,\ldots,p\} 
\mbox{ s.t. } (\tau|p) \\ \wedge \big(\tau \equiv 1 {\rm \  (mod \ 2)} \big),\\
\mu \in \mathbb{Z} \mbox{ s.t. } (-n<\mu<n) \wedge\\ \big(\mu \equiv 0 {\rm \  (mod \ 2)}\big) \wedge \\ \big(\frac{n-|\mu|}{2} \equiv 0 {\rm \  (mod \ 2)}\big)$
\end{minipage}
} \hspace{-1em}\alpha_{\tau, \mu} \Bigg)
\sqcup
\Bigg( \bigsqcup_{\tiny
\begin{minipage}{3cm}
$\tau \in \{1,\ldots,p\} \mbox{ s.t. } (\tau|p) \\
\wedge \big(\tau \equiv 0 {\rm \  (mod \ 2)}\big) \wedge \\\big( \frac{p}{\tau} \equiv 1 {\rm \  (mod \ 2)}\big),\\
\varepsilon \in \{0,1\}$
\end{minipage}
}  \hspace{-1em}\beta_{\tau, \varepsilon} \Bigg)
\sqcup
\Bigg( \bigsqcup_{\tiny
\begin{minipage}{3cm}
$\tau \in \{1,\ldots,p\} \mbox{ s.t. } (\tau|p) \\ \wedge \big(\tau \equiv 0 {\rm \  (mod \ 2)}\big)\\  \wedge \big( \frac{p}{\tau} \equiv 0 {\rm \  (mod \ 2)}\big),\\
\varepsilon \in \{0,1\},\\
\mu \in \mathbb{Z} \mbox{ s.t. } (0 \leq \mu<n) \\ \wedge \big(\mu \equiv 0 {\rm \  (mod \ 2)}\big) \\ \wedge \big(\frac{n-|\mu|}{2} \equiv 0 {\rm \  (mod \ 2)}\big)$
\end{minipage}
}  \hspace{-1em}\gamma_{\tau, \varepsilon, \mu} \Bigg)
,$$
 that is, $U_{p,n}$ can be split into the disjoint sets $\alpha_{\tau, \mu}$s, $\beta_{\tau, \varepsilon}$s, and $\gamma_{\tau, \varepsilon, \mu}$s by $\tau_p$, $\varepsilon_p$, $\mu_p$, and $\mu_{p, \tau}$. Besides, (4)-(6) of Lemma~\ref{element} imply that each of $\alpha_{\tau, \mu}$s, $\beta_{\tau, \varepsilon}$s, and $\gamma_{\tau, \varepsilon, \mu}$s is an equivalence class. Thus we have the equality of (ii) of (2).
\qed

\section{Invariants of unoriented spatial graphs using $\mathcal{R}$-palettes }
A \textit{spatial graph} is a graph embedded in the 3-dimensional Euclidean space $\mathbb{R}^3$. Two spatial graphs are \textit{equivalent} if one can be deformed by an ambient isotopy into the other. A \textit{spatial graph diagram} $D$ of a spatial graph $G$ is an image of $G$ by a regular projection onto the 2-dimensional Euclidean space $\mathbb{R}^2$ with a crossing information at each double point.
It is well-known that two spatial graph diagrams represent an equivalent spatial graph if and only if they are related by a finite sequence of the generalized Reidemeister moves as in Fig.~\ref{rmoves}. 

\begin{figure}[h]
  \begin{center}
    \includegraphics[clip,width=10.5cm]{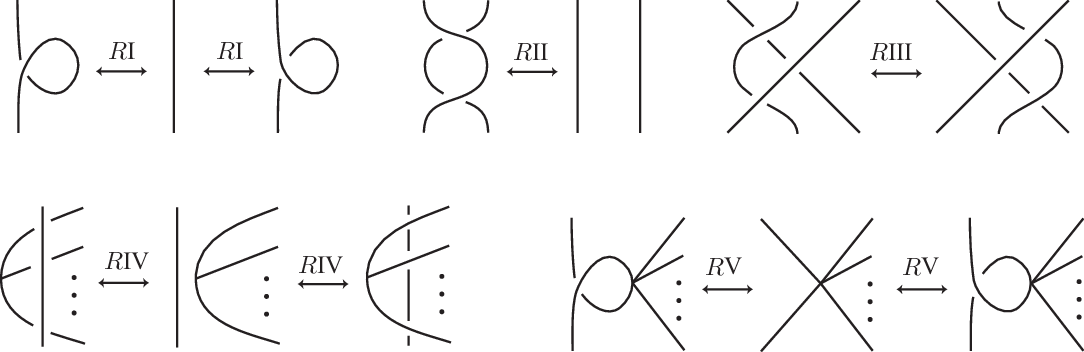}
    \caption{The generalized Reidemeister moves}
    \label{rmoves}
  \end{center}
\end{figure}

Each edge of a spatial graph diagram is separated into some pieces. These pieces are said to be \textit{arcs}. The 2-dimensional Euclidean space $\mathbb{R}^2$ is separated into some pieces by edges of a spatial graph diagram. These pieces are said to be \textit{regions}. 
In this section, a spatial graph means an unoriented spatial graph.
\begin{definition}\label{unorienteddef}
Let $P$ be an $\mathcal{R}$-palette for Dehn $p$-colorings.
Let $D$ be a diagram of an unoriented spatial graph and $\mathcal{R}(D)$ the set of regions of $D$.
A {\it Dehn $(p, P)$-coloring} of $D$ is a map $C: \mathcal{R}(D) \to \mathbb{Z}_p$ satisfying the following conditions: 
\begin{itemize}
\item For a crossing $c$ with regions $r_1, r_2, r_3$, and $r_4$ such that $r_2$ is adjacent to an arbitrary chosen $r_1$ by an under-arc and $r_3$ is adjacent to $r_1$ by the over-arc as depicted in Fig.~\ref{coloring7},
\[
C(r_1)- C(r_2)+ C(r_3) -C(r_4) = 0
\]
holds, which we call the {\it crossing condition}.
\item For a vertex $v$ with regions $r_1, \ldots , r_n$ that appear clockwisely as depicted in Fig.~\ref{coloring7}, 
\[
\Big(C(r_1), C(r_2), \ldots , C(r_n)\Big) \in P
\]
holds, which we call the {\it vertex condition}.
\end{itemize}
\begin{figure}[h]
  \begin{center}
    \includegraphics[clip,width=9cm]{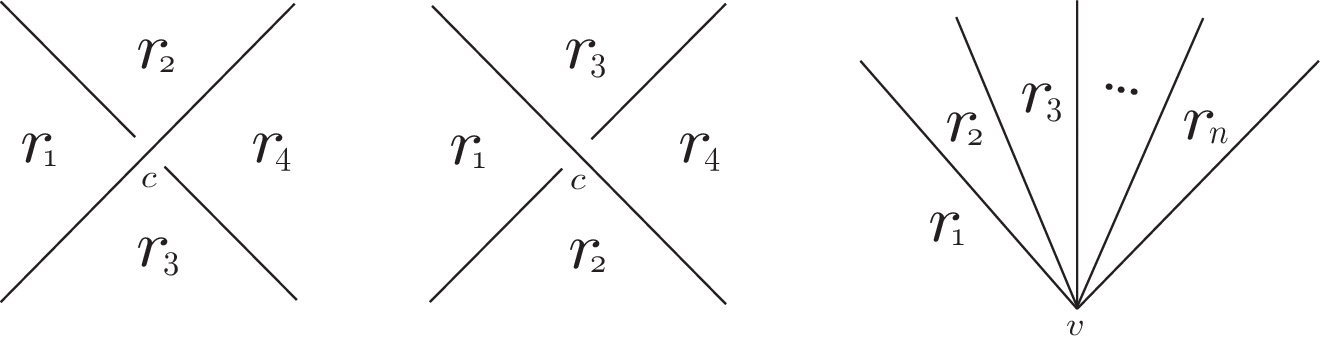}
    \caption{The coloring conditions}
    \label{coloring7}
  \end{center}
\end{figure}
We call $C(r)$ the {\it color} of a region $r$. We denote by ${\rm Col}_{(p ,P)}(D)$ the set of Dehn $(p ,P)$-colorings of $D$.
We denote by $(D,C)$ a diagram $D$ equipped with a Dehn $(p ,P)$-coloring $C$.

\end{definition}
\begin{proposition}\label{prop:coloring}
Let $D$ and $D'$ be diagrams of spatial graphs. If $D$ and $D'$ represent the same spatial graph, then there exists a bijection between ${\rm Col}_{(p ,P)}(D)$ and ${\rm Col}_{(p ,P)}(D')$.  
\end{proposition}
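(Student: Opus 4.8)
The plan is to verify that the number of Dehn $(p,P)$-colorings is unchanged under each of the generalized Reidemeister moves shown in Figure~\ref{rmoves}. Since two diagrams represent the same spatial graph if and only if they are related by a finite sequence of such moves, it suffices to produce, for each move, a bijection between the colorings of a diagram and the colorings of the diagram obtained by applying the move; composing these yields the desired bijection between ${\rm Col}_{(p,P)}(D)$ and ${\rm Col}_{(p,P)}(D')$. Because each move is a local modification affecting only a bounded portion of the diagram, the strategy in every case is the same: the regions outside the disk in which the move takes place are in canonical correspondence before and after, and I must check that any $(p,P)$-coloring of the ``outside'' extends uniquely to a $(p,P)$-coloring of the inside after the move, given that it did so before.

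First I would treat the Reidemeister moves of types I, II, III away from vertices. For these, the crossing condition $C(r_1)-C(r_2)+C(r_3)-C(r_4)=0$ is exactly the defining relation of a Dehn coloring of a link diagram, so the invariance is the classical fact (see \cite{CarterSilverWilliams13}) that the number of Dehn $p$-colorings of a link diagram is invariant under Reidemeister moves; no vertex condition is involved, so $P$ plays no role here. Concretely, for each such move one checks that the colors forced on the newly created regions by the crossing conditions are consistent, and that every choice of colors on the pre-existing regions that satisfied the old crossing conditions satisfies the new ones, and vice versa.

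Next I would handle the generalized Reidemeister moves that involve a vertex: the move sliding an arc across a vertex (an analogue of the Reidemeister II/III move past a vertex), and, if the move set includes it, the move that changes the planar embedding near a vertex by passing one incident edge over or under the vertex. Here the vertex condition $(C(r_1),\dots,C(r_n))\in P$ must be shown to be preserved. This is exactly where the four closure conditions (i)--(iv) in Definition~\ref{def:DihedralRpalettes} are used: when an over-arc or under-arc is slid past the vertex, the cyclic sequence of region colors around the vertex changes precisely by one of the operations (Op1)--(Op4) of Definition~\ref{def:Requiv} (or by a trivial relabeling), and conditions (i)--(iv) guarantee that if the sequence lay in $P$ before the move it still lies in $P$ after; the remark following Definition~\ref{def:Requiv} giving the inverse operations ensures the correspondence is a bijection rather than merely an injection. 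One must match each vertex-involving move in Figure~\ref{rmoves} with the corresponding operation: a strand passing the vertex on the side changes the colors of the regions it separates by a shift of the form in (ii) or (iii), and the move altering the cyclic order of two adjacent edges at the vertex corresponds to (iv) (with (i) accounting for the choice of which region is labeled $r_1$).

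I expect the main obstacle to be the bookkeeping in the vertex moves: one has to set up coordinates for the region colors around the vertex both before and after the move, carefully track which crossing conditions on the strand being moved force which color changes, and then recognize the resulting transformation as exactly (Op1)--(Op4). In particular the signs $(-1)^i$ appearing in (ii) and in the over/under asymmetry of the crossing condition must be reconciled, and one must be careful that the move is genuinely reversible so that the map on colorings is a bijection. Once the correct dictionary between moves and operations is established, invariance is immediate from the closure properties (i)--(iv), and Proposition~\ref{prop:coloring} follows. (It is also worth remarking that, as in the $\mathcal A$-palette case, no new regions with undetermined colors are created by the vertex moves, so the only thing to check there really is membership in $P$.)
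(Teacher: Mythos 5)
Your proposal follows essentially the same route as the paper's proof: define $C'$ to agree with $C$ outside the disk where the move occurs, note that the interior colors are forced by the crossing conditions, and use the closure conditions (i)--(iv) of Definition~\ref{def:DihedralRpalettes} (matched move-by-move with (Op1)--(Op4)) to see that the vertex condition is preserved under the moves of types IV and V. The paper carries out exactly one representative case explicitly (the RIV move realizing condition (iii)) and leaves the rest to the reader, so your outline is at the same level of completeness and is correct.
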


\begin{proof}
Let $D$ and $D'$ be diagrams such that $D'$ is obtained from $D$ by a single generalized Reidemeister move. Let $E$ be a $2$-disk in which the move is applied. Let $C$ be a Dehn
$(p, P)$-coloring of $D$. We define a Dehn $(p, P)$-coloring $C'$ of $D'$, corresponding to
$C$, by $C'(r) = C(r)$ for each region $r$ appearing in the outside of $E$. Then the colors
of the regions appearing in $E$, by $C'$, are uniquely determined, see Fig.~\ref{coloring8} and \ref{coloring9} for generalized Reidemeister moves of type IV and V, respectively.

\begin{figure}[h]
  \begin{center}
    \includegraphics[clip,width=8cm]{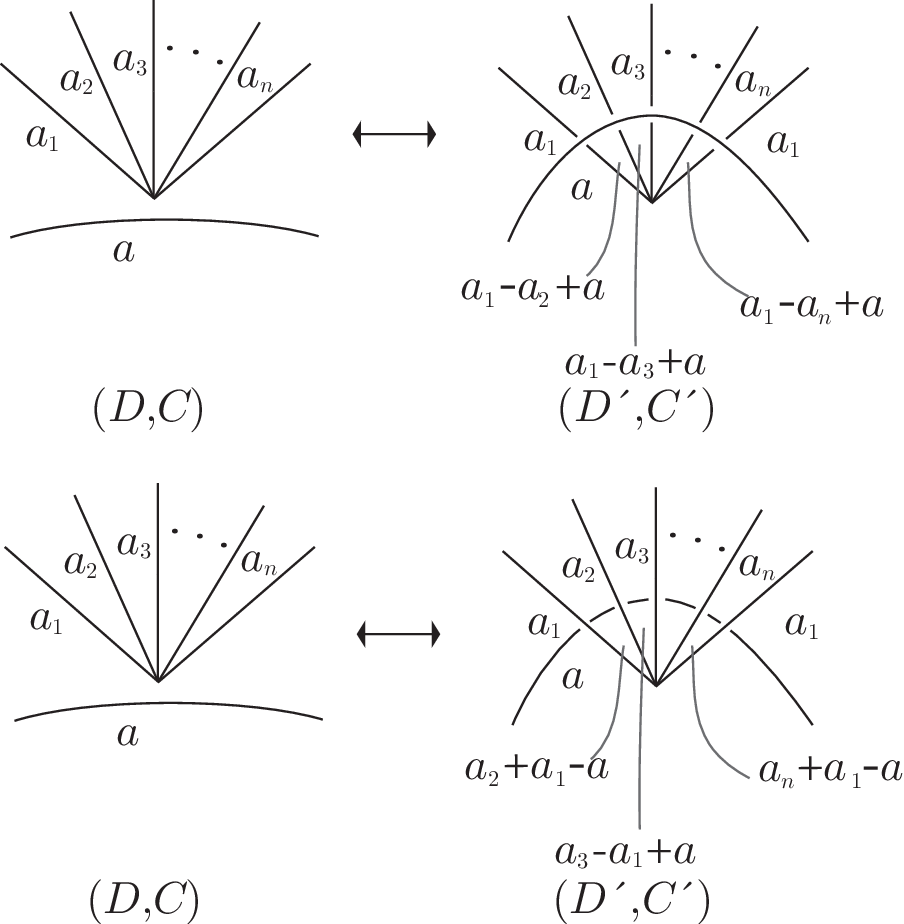}
    \caption{The correspondence of Dehn $(p, P)$-colorings under $R{\rm IV}$}
    \label{coloring8}
  \end{center}
\end{figure}
\begin{figure}[h]
  \begin{center}
    \includegraphics[clip,width=8cm]{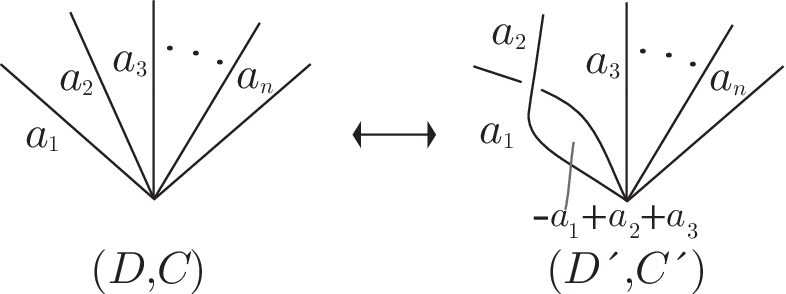}
    \caption{The correspondence of Dehn $(p, P)$-colorings under $R{\rm V}$}
    \label{coloring9}
  \end{center}
\end{figure}

More precisely, for the upper move in Fig.~\ref{coloring8}, we have $(a_1, \ldots , a_n) \in P$ by the vertex condition of the Dehn $(p, P)$-coloring $C$. We then have $(a, a_1-a_2+a, \ldots, a_1 -a_n +a  ) \in P$ by Definition~\ref{def:DihedralRpalettes}, which implies that the vertex condition of the Dehn $(p, P)$-coloring $C'$ for the corresponding vertex is also satisfied. Similar arguments apply to the other cases.

\end{proof}
\noindent Proposition~\ref{prop:coloring} shows that the number of Dehn $(p ,P)$-colorings, i.e. $\# {\rm Col}_{(p ,P)}(D)$, is an invariant of unoriented spatial graphs.

The next example implies that we might be able to distinguish spatial graphs with selecting an appropriate palette.

\begin{example}
We cannot distinguish the spatial graphs $G$ and $H$ with the number of Dehn $(3 ,U_3)$-colorings because it holds that $\# {\rm Col}_{(3 ,U_3)}(G)=\# {\rm Col}_{(3 ,U_3)}(H)=3^5=243$ as depicted in Fig.~\ref{GH}, where $a, b, c, d, e \in \mathbb{Z}_3$. See Example \ref{universal} for $U_3$.

\begin{figure}[h]
  \begin{center}
    \includegraphics[clip,width=10cm]{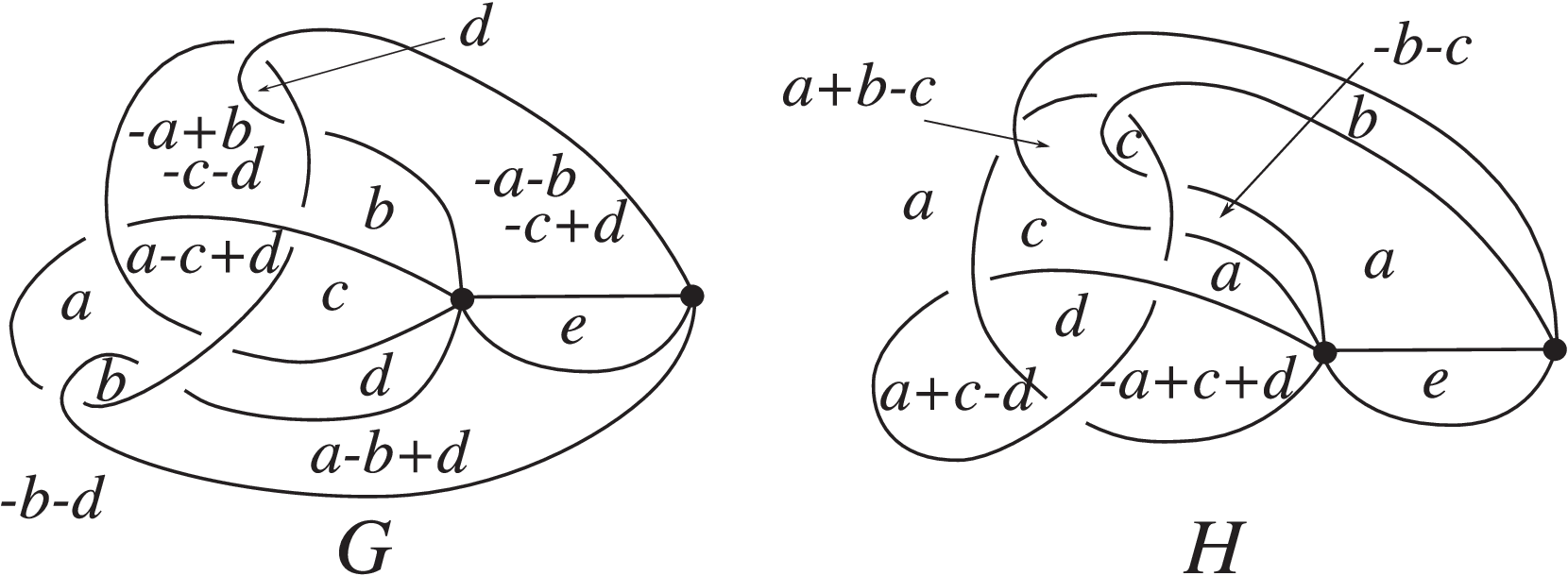}
    \caption{The Dehn $(3, U_3)$-colorings for $G$ and $H$}
    \label{GH}
  \end{center}
\end{figure}

To distinguish these spatial graphs, let us replace $U_3$ with $P=(U_{3,4} \setminus A_{3,4}) \cup A_{3,6}$. See Examples \ref{universal} and \ref{alternating} for $U_{p,n}$ and $A_{p,n}$, respectively. We first compute the number of the Dehn $(3, P)$-colorings of $G$; see Fig.~\ref{G}. Let $C(r_1)=C(r_3)=C(r_5)=a$ and $C(r_2)=C(r_4)=C(r_6)=b$ because the 6-tuples in $P$ are alternating such as $(a, b, a, b, a, b)$ for $a, b \in \mathbb{Z}_3$. When we put $C(r_7)=c$ for $c \in \mathbb{Z}_3$, $C(r_8)=a-b+c$ is given from the crossing condition at $c_1$. Similarly, we have the following colors from each crossing condition:
\[
\begin{cases}
 C(r_9)=2b-c & {\rm from \ the \ crossing \ condition \ at \ }  c_2, \\
 C(r_{10})=a+b-c & {\rm from \ the \ crossing \ condition \ at \ }  c_3,\\ 
 C(r_{11})=2b-c & {\rm from \ the \ crossing \ condition \ at \ }   c_4, \\
 C(r_{12})=c & {\rm from \ the \ crossing \ condition \ at \ }   c_5.
   \end{cases}
\]
Then the crossing conditions at $c_6$ and $c_7$ give $b=c$. This means that the 4-tuples of colors around the 4-valent vertex are alternating as $(C(r_1), C(r_6), C(r_5), C(r_{12}))=(a, b, a, b)$, which does not satisfy the vertex condition. Therefore, we have \[\# {\rm Col}_{(3, P)}(G)=0.\]

\begin{figure}[h]
   \begin{center}
   \includegraphics[width=6cm]{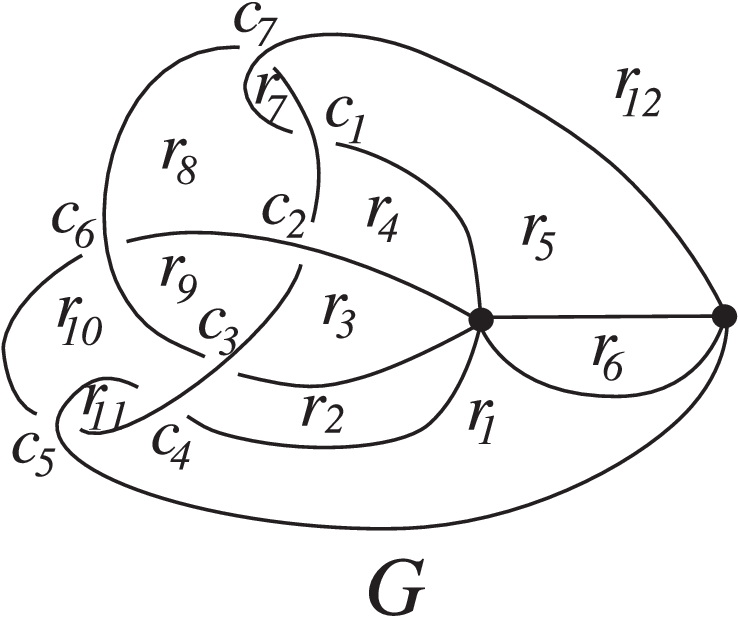}
  \end{center}
  \caption{The spatial graph $G$}
  \label{G} 
  \end{figure}

Next, we compute the number of  Dehn $(3, P)$-colorings of $H$; see Fig.~\ref{H}. As in the case of $G$, let $C(r_1)=C(r_3)=C(r_5)=a$, $C(r_2)=C(r_4)=C(r_6)=b$, and $C(r_7)=c$ for $a, b, c \in \mathbb{Z}_3$. We have the following colors from each crossing condition:
\[
\begin{cases}
 C(r_8)=a-b+c &{\rm from \ the \ crossing \ condition \ at \ }  c_1, \\
 C(r_9)=c & {\rm from \ the \ crossing \ condition \ at \ }  c_2, \\
 C(r_{10})=a+b-c & {\rm from \ the \ crossing \ condition \ at \ }  c_3,\\ 
 C(r_{11})=2b-c & {\rm from \ the \ crossing \ condition \ at \ }  c_4, \\
 C(r_{12})=-b+2c & {\rm from \ the \ crossing \ condition \ at \ }  c_5.
   \end{cases}
\]

Here we observe the 4-valent vertex. The 4-tuples of colors around the 4-valent vertex $(C(r_1), C(r_6), C(r_5), C(r_{12}))=(a, b, a, 2c-b)$ can not be alternating to satisfy the vertex condition, and thus, $b \neq c$ is required. This follows
\[\# {\rm Col}_{(3, P)}(H)=3 \times 3 \times 2=18.\] 

  \begin{figure}[h]
  \begin{center}
   \includegraphics[width=6cm]{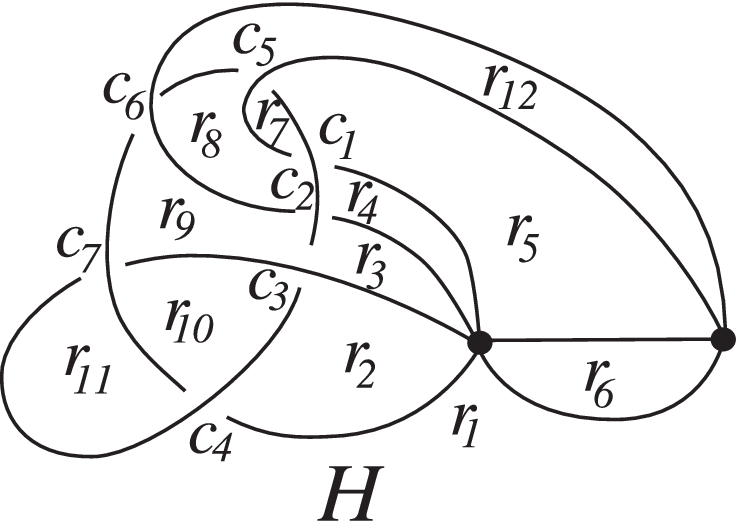}
  \end{center}
  \caption{The spatial graph $H$}
  \label{H}
\end{figure}

Therefore, we see that the spatial graphs $G$ and $H$ are not equivalent.

\end{example}

\section{Generalized $\mathcal{R}$-palettes for region colorings of oriented spatial graph diagrams}\label{generaloriented}

The notion of $\mathcal{R}$-palettes for Dehn $p$-colorings can be extended for knot-theoretic ternary-quasigroups and  region colorings of ``oriented" spatial graph diagrams in general. 
In this section, we will show a generalization of the notion.

\begin{definition}\label{def:horizontal}
 A {\it knot-theoretic ternary-quasigroup} \cite{Niebrzydowski0, Niebrzydowski1, Niebrzydowski2} is a pair of  a   set $X$  and a ternary operation $[\, ]: X^3 \to X; (a,b,c) \mapsto [a,b,c]$ satisfying the following property:
\begin{enumerate}
\item[] \hspace{-0.5cm}($\mathcal{KTQ}$1) For any $a,b,c\in X$, 
\begin{itemize}
\item[(i)] there exists a unique $d_1\in X$ such that $[a,b,d_1]=c$, 
\item[(ii)] there exists a unique $d_2\in X$ such that $[a,d_2,b]=c$, 
\item[(iii)] there exists a unique $d_3 \in X$ such that $[d_3,a,b]=c$.
\end{itemize}
\item[] \hspace{-0.5cm}($\mathcal{KTQ}$2) For any $a,b,c,d \in X$, it holds that 
\[
\begin{array}{l}
[b,[a,b,c],[a,b,d]] = [c,[a,b,c],[a,c,d]]=[d,[a,b,d],[a,c,d]].
\end{array} 
\]
\end{enumerate}

\end{definition}
\noindent The axioms of a knot-theoretic ternary-quasigroup $(X, [\,])$ are obtained from the oriented Reidemeister moves of link diagrams, which is observed when we consider a region coloring of an oriented link diagram by $(X, [\,])$, that is, an assignment of an element of $X$ to each region satisfying the crossing condition depicted in Fig.~\ref{coloring2}.
See Fig.~\ref{RmoveIII} for the correspondence between the Reidemeister move of type III and the axiom ($\mathcal{KTQ}$2) of a knot-theoretic ternary-quasigroup $(X, [\,])$.
\begin{figure}[h]
  \begin{center}
    \includegraphics[clip,width=6.0cm]{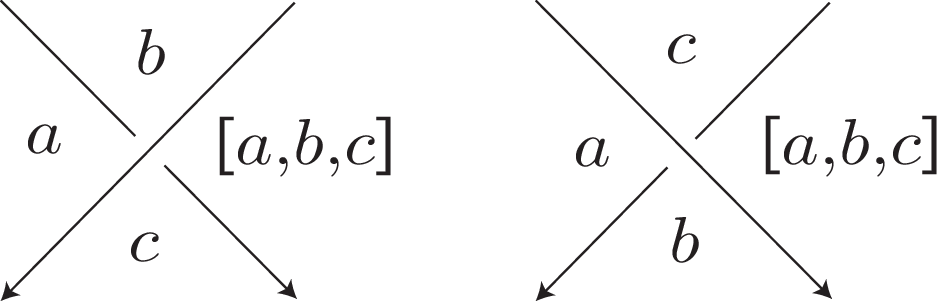}
    \caption{The crossing condition for a region coloring by $(X,[\,])$}
    \label{coloring2}
  \end{center}
\end{figure}

\begin{figure}[h]
  \begin{center}
      \includegraphics[clip,width=5.6cm]{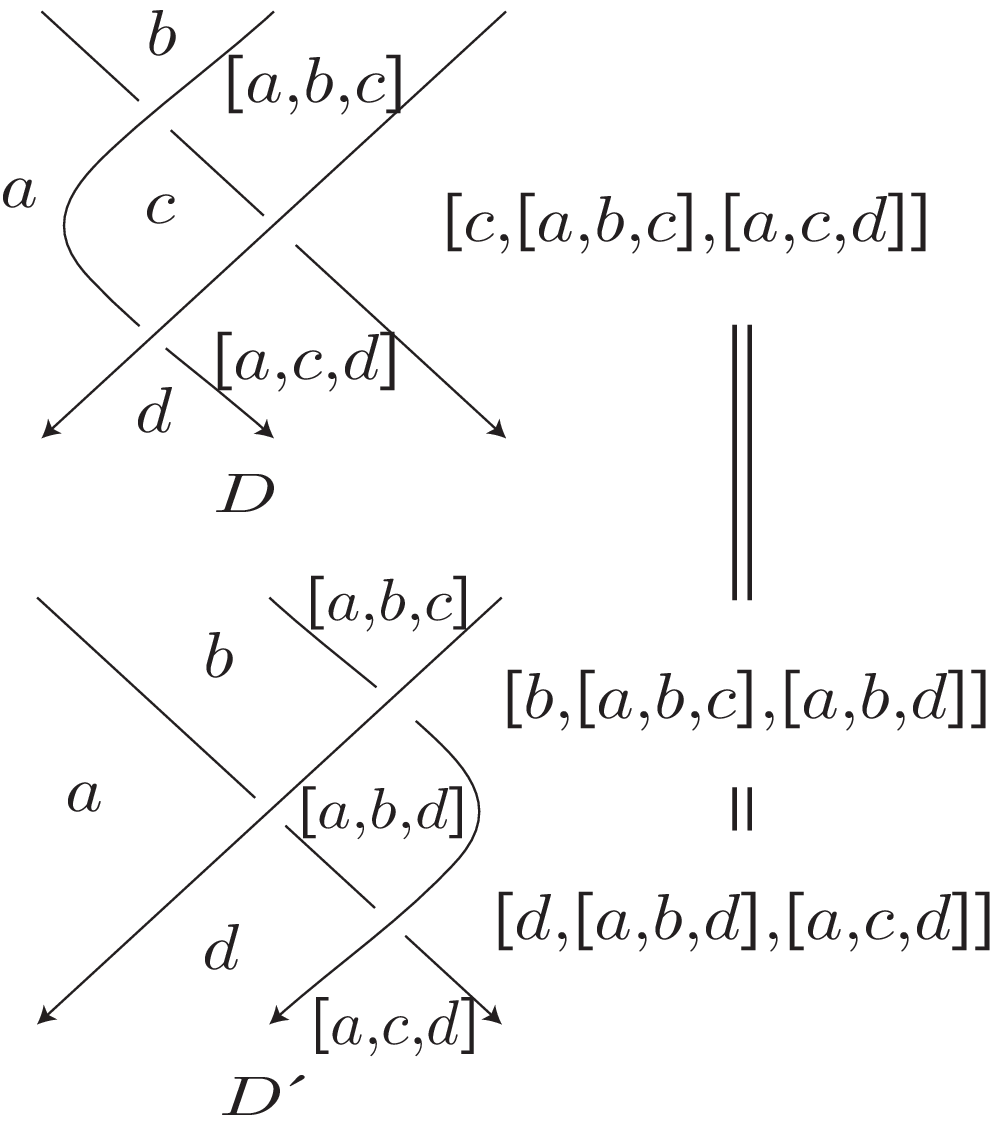}
    \caption{The correspondence between $R$III and ($\mathcal{KTQ}$2)}
    \label{RmoveIII}
  \end{center}
\end{figure}

Let $(X, [\,])$ be a knot-theoretic ternary-quasigroup. 
Define $H_{(a,b;i)}:X \to X \ (i \in \{1,2,3\})$ by $$H_{(a, b;1)}(c)=[c,a,b], \  H_{(a, b;2)}(c)=[a,c,b], \mbox{ and } \ H_{(a, b;3)}(c)=[a,b,c]$$ for $a,b,c\in X$.

\begin{definition}\label{def:LR}
For elements $a, b, c \in X$,  we define maps $\underline L_{(a,b)}$, $\overline L_{(a,b)}$, $\underline R_{(a,b)}$, and $\overline R_{(a,b)}: X \to X$ by
$$\underline L_{(a,b)}(c)=H_{(a, b;2)}(c), \ \overline L_{(a,b)}(c)=H_{(a, b;3)}(c),$$
$$\underline R_{(a,b)}(c)=H^{-1}_{(c, a;3)}(b), \ \overline R_{(a,b)}(c)=H^{-1}_{(c, a;2)}(b).$$
\end{definition}

As shown in Fig.~\ref{crossing1}, $\underline L$ (resp. $\overline L$, $\underline R$, $\overline R$) is related to  colors of the regions in the left side for the under-semi-arcs (resp. in the left side for the over-semi-arcs, in the right side for the  under-semi-arcs, in the right side for the over-semi-arcs) of a crossing.

\begin{figure}[h]
  \begin{center}
    \includegraphics[clip,width=7.0cm]{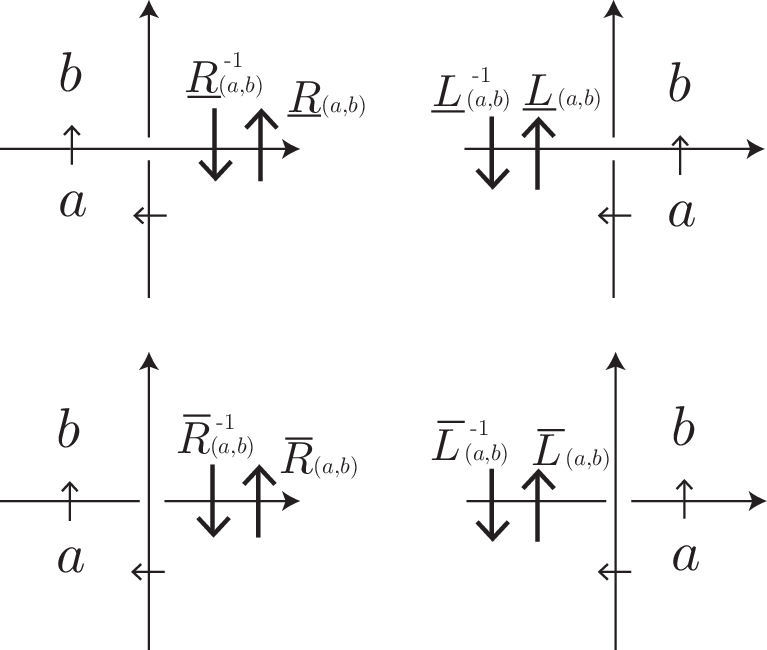}
    \caption{The maps $\underline{L}$, $\overline{L}$, $\underline{R}$, and $\overline{R}$}
    \label{crossing1}
  \end{center}
\end{figure}

\begin{lemma}\label{lem:LR}
For any element $a, b \in X$, $\underline L_{(a,b)}$, $\overline L_{(a,b)}$, $\underline R_{(a,b)}$, and $\overline R_{(a,b)}$ are bijective.
\end{lemma}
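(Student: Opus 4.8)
The plan is to show each of the four maps is bijective by exhibiting it as a composition of maps that are known to be bijective, ultimately reducing everything to the unique-solvability axiom ($\mathcal{KTQ}$1). Recall the definitions: $\underline L_{(a,b)}(c)=H_{(a,b;2)}(c)=[a,c,b]$, $\overline L_{(a,b)}(c)=H_{(a,b;3)}(c)=[a,b,c]$, $\underline R_{(a,b)}(c)=H^{-1}_{(c,a;3)}(b)$, and $\overline R_{(a,b)}(c)=H^{-1}_{(c,a;2)}(b)$. So the first step is to observe that for fixed $a,b\in X$, the maps $H_{(a,b;i)}:X\to X$ are themselves bijective for each $i\in\{1,2,3\}$: this is precisely a restatement of ($\mathcal{KTQ}$1)(i)--(iii). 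Indeed, $H_{(a,b;3)}(c)=[a,b,c]$, and ($\mathcal{KTQ}$1)(i) says that for every target $c'\in X$ there is a unique $d_1$ with $[a,b,d_1]=c'$, i.e. $H_{(a,b;3)}$ is a bijection; similarly ($\mathcal{KTQ}$1)(ii) gives bijectivity of $H_{(a,b;2)}$ and ($\mathcal{KTQ}$1)(iii) gives bijectivity of $H_{(a,b;1)}$.

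Given that, $\underline L_{(a,b)}=H_{(a,b;2)}$ and $\overline L_{(a,b)}=H_{(a,b;3)}$ are bijective immediately. For $\underline R$ and $\overline R$ the argument is a little different because the subscripts of $H$ depend on the input $c$, so these are not literally of the form ``fixed $H$ applied to $c$''. Here I would argue by direct verification of injectivity and surjectivity. For surjectivity of $\underline R_{(a,b)}$: given any $e\in X$, I want $c$ with $H^{-1}_{(c,a;3)}(b)=e$, equivalently $H_{(c,a;3)}(e)=b$, i.e. $[c,a,e]=b$. By ($\mathcal{KTQ}$1)(iii) (with the roles: we are solving for the first slot) there is a unique such $c$, so $\underline R_{(a,b)}$ is onto, and the uniqueness of $c$ simultaneously gives injectivity. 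For $\overline R_{(a,b)}$: given $e\in X$, I want $c$ with $H_{(c,a;2)}(e)=b$, i.e. $[c,e,a]=b$; again ($\mathcal{KTQ}$1)(iii) provides a unique $c$, giving both surjectivity and injectivity. Thus all four maps are bijections.

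I expect this to be entirely routine; the only mild subtlety — and the one place to be careful — is keeping straight which axiom among ($\mathcal{KTQ}$1)(i)--(iii) corresponds to which slot, and noticing that for $\underline R$ and $\overline R$ one must solve for $c$ sitting in the \emph{first} argument of the ternary bracket (hence always invoking part (iii)), even though the definition is phrased via $H^{-1}$ in the third or second slot. Concretely, the equation ``$H^{-1}_{(c,a;3)}(b)=e$'' unwinds to ``$[c,a,e]=b$'' where the unknown is $c$, and this is exactly the form covered by ($\mathcal{KTQ}$1)(iii). So I would organize the write-up as: (1) restate ($\mathcal{KTQ}$1) as ``$H_{(a,b;i)}$ is bijective'', handling $\underline L,\overline L$ on the spot; (2) for $\underline R$ and $\overline R$, rewrite $y=R_{(a,b)}(c)$ as an equation linear-looking in the bracket with $c$ as the unknown in slot $1$, and cite ($\mathcal{KTQ}$1)(iii) for unique solvability, yielding bijectivity. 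No hard obstacle is anticipated.
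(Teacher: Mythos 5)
Your argument is correct and is essentially the paper's proof: the paper simply writes down explicit inverses, namely $\underline L^{-1}_{(a,b)}=H^{-1}_{(a,b;2)}$, $\overline L^{-1}_{(a,b)}=H^{-1}_{(a,b;3)}$, $\underline R^{-1}_{(a,b)}(c)=H^{-1}_{(a,c;1)}(b)$ and $\overline R^{-1}_{(a,b)}(c)=H^{-1}_{(c,a;1)}(b)$, which are exactly the unique first-slot solutions your appeal to ($\mathcal{KTQ}$1)(iii) produces. Your identification of which part of ($\mathcal{KTQ}$1) governs which slot, and your handling of the $c$-dependent subscripts in $\underline R,\overline R$, match the paper's intent precisely.
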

\begin{proof}
The inverses are defined by
$$\underline L^{-1}_{(a,b)}(c)=H^{-1}_{(a, b;2)}(c), \ \overline L^{-1}_{(a,b)}(c)=H^{-1}_{(a, b;3)}(c),$$
$$\underline R^{-1}_{(a,b)}(c)=H^{-1}_{(a, c;1)}(b), \ \overline R^{-1}_{(a,b)}(c)=H^{-1}_{(c,a;1)}(b).$$

\end{proof}

\begin{remark}\label{lem:crossing}
For any elements $a, b, c\in X$, we have $$\underline R_{(a,b)}(c)=\overline L^{-1}_{(c,a)}(b), \ \underline R^{-1}_{(a,b)}(c)= \overline R^{-1}_{(c,b)}(a)$$
$$\underline L_{(a,b)}(c)=\overline L_{(a,c)}(b), \ \underline L^{-1}_{(a,b)}(c)=\overline R_{(b,c)}(a), $$
see Fig.~\ref{crossing2}.
\begin{figure}[h]
  \begin{center}
    \includegraphics[clip,width=9.0cm]{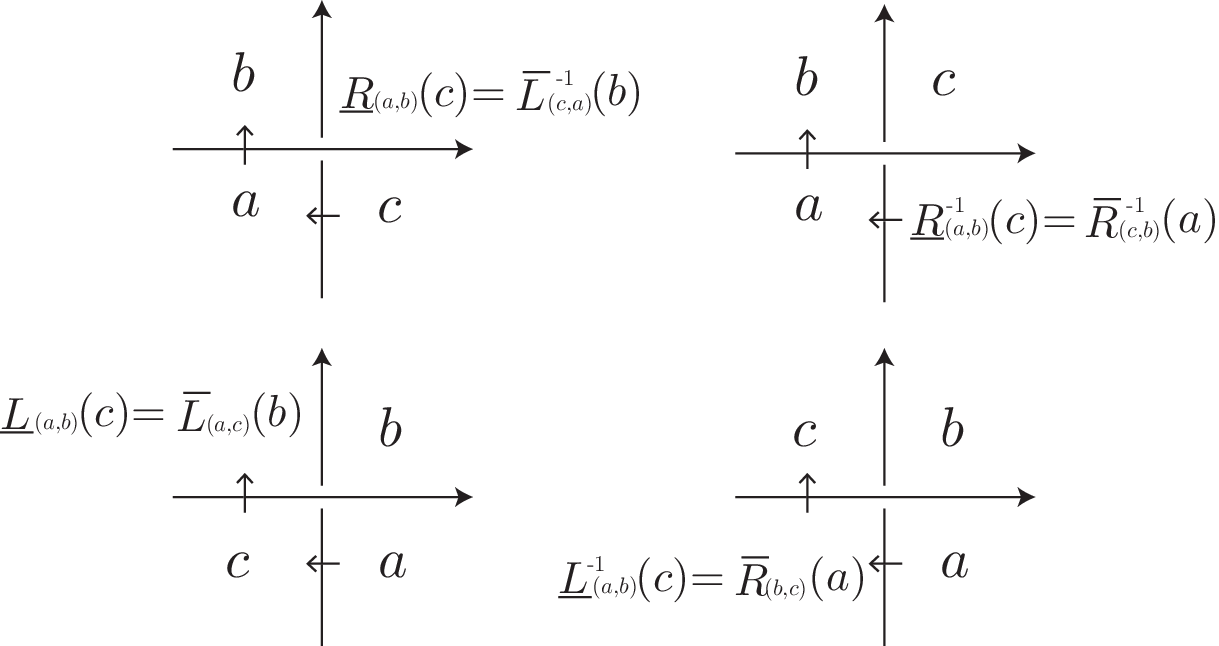}
    \caption{Properties of $\underline{L}$, $\overline{L}$, $\underline{R}$, and $\overline{R}$}
    \label{crossing2}
  \end{center}
\end{figure}
\end{remark}

Hereafter, $\underline L^{+1}$, $\overline L^{+1}$, $\underline R^{+1}$, and $\overline R^{+1}$ represent $\underline L$, $\overline L$, $\underline R$, and $\overline R$, respectively.
\begin{definition}\label{def:palettes1}
We put two copies $\{a^{+1} ~|~ a \in X\}$ and $\{a^{-1} ~|~ a \in X\}$ of $X$, and set $\mathcal{X}=\{a^{+1} ~|~ a \in X\} \cup \{a^{-1} ~|~ a \in X\}$, where we note that we distinguish two copies $a^{+1}$ and $a^{-1}$ of the same element $a$ of $X$ by putting the superscripts $+1$ and $-1$.
An {\it oriented $\mathcal{R}$-palette} $P$ of $(X, [\, ] )$ is a subset of $\displaystyle 
\bigcup_{n \in \mathbb{Z}_{+}} \mathcal{X}^n$ satisfying the following conditions:

(i) \ If $(a_1^{\varepsilon_1}, a_2^{\varepsilon_2}, \ldots, a_n^{\varepsilon_n}) \in P$, then
$(a_2^{\varepsilon_2}, \ldots, a_n^{\varepsilon_n}, a_1^{\varepsilon_1}) \in P$.

(ii) \ If $(a_1^{\varepsilon_1}, a_2^{\varepsilon_2}, \ldots, a_n^{\varepsilon_n}) \in P$, then it holds that

$$\underline L^{\varepsilon_n}_{(a_n,a_1; \varepsilon_n)} \circ \underline L^{\varepsilon_{n-1}}_{(a_{n-1},a_n; \varepsilon_{n-1})} \circ \cdots \circ \underline L^{\varepsilon_1}_{(a_1,a_2; \varepsilon_1)}={\rm id},$$

 $$\overline L^{\varepsilon_n}_{(a_n,a_1; \varepsilon_n)} \circ \overline L^{\varepsilon_{n-1}}_{(a_{n-1},a_n; \varepsilon_{n-1})} \circ \cdots \circ \overline L^{\varepsilon_1}_{(a_1,a_2; \varepsilon_1)}={\rm id},$$

 $$\underline R^{\varepsilon_n}_{(a_n,a_1; \varepsilon_n)} \circ \underline R^{\varepsilon_{n-1}}_{(a_{n-1},a_n; \varepsilon_{n-1})} \circ \cdots \circ \underline R^{\varepsilon_1}_{(a_1,a_2; \varepsilon_1)}={\rm id},$$

$$\overline R^{\varepsilon_n}_{(a_n,a_1; \varepsilon_n)} \circ \overline R^{\varepsilon_{n-1}}_{(a_{n-1},a_n; \varepsilon_{n-1})} \circ \cdots \circ \overline R^{\varepsilon_1}_{(a_1,a_2; \varepsilon_1)}={\rm id},$$

 where 
 $(a_i ,a_j; \varepsilon_i) = \left\{
\begin{array}{ll}
(a_i ,a_j)  & \mbox{ if } \varepsilon_i =+1, \\[5pt]
(a_j ,a_{i}) & \mbox{ if } \varepsilon_i =-1.\\[5pt]
\end{array}
\right.$

(iii) \ If $(a_1^{\varepsilon_1}, a_2^{\varepsilon_2}, \ldots, a_n^{\varepsilon_n}) \in P$, then it holds that
$$\Big(a, \underline L^{\varepsilon_1}_{(a_1,a_2; \varepsilon_1)}(a),  \underline L^{\varepsilon_2}_{(a_2,a_3; \varepsilon_2)} \circ \underline L^{\varepsilon_1}_{(a_1,a_2; \varepsilon_1)}(a), \ldots ,\underline L^{\varepsilon_{n-1}}_{(a_{n-1},a_n; \varepsilon_{n-1})} \circ \cdots \circ \underline L^{\varepsilon_1}_{(a_1,a_2; \varepsilon_1)}(a)\Big) \in P,$$
$$\Big(a, \overline L^{\varepsilon_1}_{(a_1,a_2; \varepsilon_1)}(a),  \overline L^{\varepsilon_2}_{(a_2,a_3; \varepsilon_2)} \circ \overline L^{\varepsilon_1}_{(a_1,a_2; \varepsilon_1)}(a), \ldots ,\overline L^{\varepsilon_{n-1}}_{(a_{n-1},a_n; \varepsilon_{n-1})} \circ \cdots \circ \overline L^{\varepsilon_1}_{(a_1,a_2; \varepsilon_1)}(a)\Big) \in P,$$
$$\Big(a, \underline R^{\varepsilon_1}_{(a_1,a_2; \varepsilon_1)}(a),  \underline R^{\varepsilon_2}_{(a_2,a_3; \varepsilon_2)} \circ \underline R^{\varepsilon_1}_{(a_1,a_2; \varepsilon_1)}(a), \ldots ,\underline R^{\varepsilon_{n-1}}_{(a_{n-1},a_n; \varepsilon_{n-1})} \circ \cdots \circ \underline R^{\varepsilon_1}_{(a_1,a_2; \varepsilon_1)}(a)\Big) \in P,$$
$$\Big(a, \overline R^{\varepsilon_1}_{(a_1,a_2; \varepsilon_1)}(a),  \overline R^{\varepsilon_2}_{(a_2,a_3; \varepsilon_2)} \circ \overline R^{\varepsilon_1}_{(a_1,a_2; \varepsilon_1)}(a), \ldots ,\overline R^{\varepsilon_{n-1}}_{(a_{n-1},a_n; \varepsilon_{n-1})} \circ \cdots \circ \overline R^{\varepsilon_1}_{(a_1,a_2; \varepsilon_1)}(a)\Big) \in P$$
for any $a \in X$. 

(iv) \ If $(a_1^{\varepsilon_1}, a_2^{\varepsilon_2},\ldots, a_n^{\varepsilon_n}) \in P$, when $n>2$, 
$$(a_1^{\varepsilon_2}, \big( \underline L^{\varepsilon_2}_{(a_2,a_3; \varepsilon_2)}(a_1) \big)^{\varepsilon_1}, a_3^{\varepsilon_3}, \ldots, a_n^{\varepsilon_n}) \in P,$$ 
$$(a_1^{\varepsilon_2}, \big( \overline L^{\varepsilon_2}_{(a_2,a_3; \varepsilon_2)}(a_1) \big)^{\varepsilon_1}, a_3^{\varepsilon_3}, \ldots, a_n^{\varepsilon_n}) \in P,$$ 
$$(a_1^{\varepsilon_2}, \big( \underline R^{\varepsilon_2}_{(a_2,a_3; \varepsilon_2)}(a_1) \big)^{\varepsilon_1}, a_3^{\varepsilon_3}, \ldots, a_n^{\varepsilon_n}) \in P,$$ 
$$(a_1^{\varepsilon_2}, \big( \overline R^{\varepsilon_2}_{(a_2,a_3; \varepsilon_2)}(a_1) \big)^{\varepsilon_1}, a_3^{\varepsilon_3}, \ldots, a_n^{\varepsilon_n}) \in P$$ hold, and when $n=2$, 
$$(a_1^{\varepsilon_2}, \big( \underline L^{\varepsilon_2}_{(a_2,a_1; \varepsilon_2)}(a_1) \big)^{\varepsilon_1}) \in P,$$ 
$$(a_1^{\varepsilon_2}, \big( \overline L^{\varepsilon_2}_{(a_2,a_1; \varepsilon_2)}(a_1) \big)^{\varepsilon_1}) \in P,$$ 
$$(a_1^{\varepsilon_2}, \big( \underline R^{\varepsilon_2}_{(a_2,a_1; \varepsilon_2)}(a_1) \big)^{\varepsilon_1}) \in P,$$ 
$$(a_1^{\varepsilon_2}, \big( \overline R^{\varepsilon_2}_{(a_2,a_1; \varepsilon_2)}(a_1) \big)^{\varepsilon_1}) \in P$$ 
hold.
\end{definition}

The axioms of an oriented $\mathcal{R}$-palette $P$ of $(X,[\,])$ are obtained from the oriented Reidemeister moves, of spatial graph diagrams. 
This is observed when we consider $(X,P)$-colorings for regions of spatial graph diagrams as follows:  
\begin{definition}\label{def:coloring1}
Let $(X,[\,])$ be a knot-theoretic ternary-quasigroup and $P$ be an oriented $\mathcal{R}$-palette of $(X,[\,])$.
Let $D$ be a diagram of an oriented spatial graph and $\mathcal{R}(D)$ the set of regions of $D$.
An {\it $(X,P)$-coloring} of $D$ is a map $C: \mathcal{R}(D) \to X$ satisfying the following conditions: 
\begin{itemize}
\item For a crossing $c$ with regions $r_1, r_2, r_3$, and $r_4$ as depicted in Fig.~\ref{coloring5},
\[
\Big[C(r_1), C(r_2), C(r_3)\Big] = C(r_4)
\]
holds, see also Fig.~\ref{coloring2}, where $r_1$ is located in the right side of the over- and under-arcs both with respect to their orientations, $r_2$ is the region which is adjacent to $r_1$ by an under-arc and $r_3$ is the region which is adjacent to $r_1$ by the over-arc.
\item For a vertex $v$ with regions $r_1, \ldots , r_n$ that appear clockwisely  as depicted in Fig.~\ref{coloring5}, 
\[
\Big(C(r_1)^{\varepsilon_1}, C(r_2)^{\varepsilon_2}, \ldots , C(r_n)^{\varepsilon_n}\Big) \in P
\]
holds, where for $i\in \{1,\ldots ,n\}$, ${\varepsilon_i}=+1$ if the arc $x_i$ which separates the regions $r_{i}$ and $r_{i+1}$ points to $v$, and $\varepsilon_i=-1$ otherwise. 
\end{itemize}
We call $C(r)$ the {\it color} of a region $r$.
We denote by ${\rm Col}_{(X,P)}(D)$ the set of $(X,P)$-colorings of $D$.
\end{definition}
\begin{figure}[h]
  \begin{center}
    \includegraphics[clip,width=9.0cm]{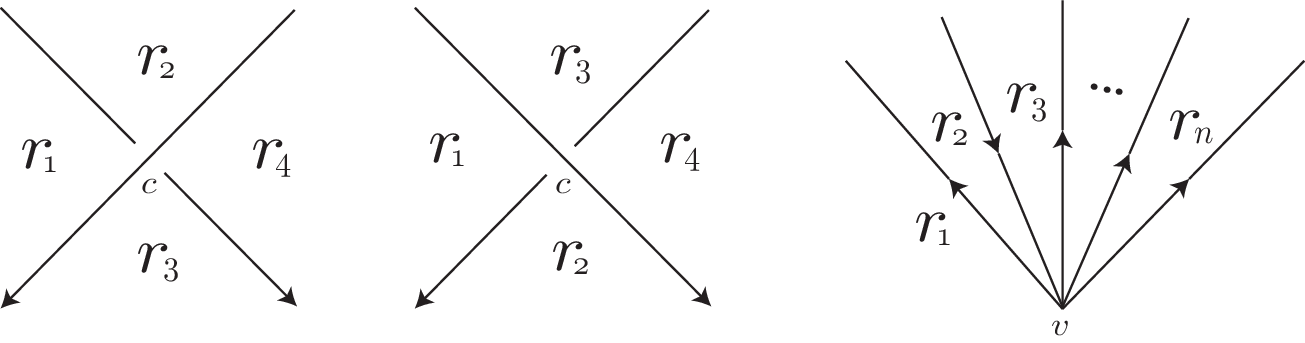}
    \caption{Crossings and vertices}
    \label{coloring5}
  \end{center}
\end{figure}
\begin{proposition}\label{prop:coloring1}
Let $D$ and $D'$ be diagrams of oriented spatial graphs. If $D$ and $D'$ represent the same spatial graph, then there exists a bijection between ${\rm Col}_{(X,P)}(D)$ and ${\rm Col}_{(X,P)}(D')$.  
\end{proposition}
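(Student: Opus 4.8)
The plan is to verify invariance under each generalized Reidemeister move separately, following exactly the template of the proof of Proposition~\ref{prop:coloring}. Given diagrams $D$ and $D'$ with $D'$ obtained from $D$ by a single move inside a $2$-disk $E$, and an $(X,P)$-coloring $C$ of $D$, I would define $C'$ to agree with $C$ on every region meeting the complement of $E$, and then argue that the colors of the regions inside $E$ are forced and consistent. The construction is clearly symmetric in $D$ and $D'$, so producing a well-defined $C'$ for every $C$ gives the desired bijection.

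First I would treat the Reidemeister moves involving only crossings (types I, II, III). For these the relevant algebraic facts are Lemma~\ref{lem:LR} (bijectivity of $\underline L,\overline L,\underline R,\overline R$, equivalently the $(\mathcal{KTQ}1)$ invertibility axioms), Remark~\ref{lem:crossing}, and axiom $(\mathcal{KTQ}2)$. Concretely: for RI the new crossing created inside $E$ determines one new region color via an $H_{(a,b;i)}$ or its inverse, and the loop condition forces that color to equal an existing one by $(\mathcal{KTQ}1)$; for RII the two new crossings impose two constraints whose composite is the identity, again by invertibility; for RIII the equality of the two colorings of the central region is precisely the statement that the two parenthesizations in $(\mathcal{KTQ}2)$ agree, as indicated in FIGURE~\ref{RmoveIII}. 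Since $P$ is a palette, the vertex conditions play no role here — there are no vertices in these moves — so only the crossing condition must be checked, and it follows from the ternary-quasigroup axioms exactly as for Dehn colorings.

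Next I would handle the moves involving a vertex, namely the generalized moves of type IV (an arc slides over or under a vertex) and type V (the planar/vertex moves that permute the cyclic order or change how an incident arc is routed past a crossing adjacent to the vertex). This is where axioms (i)--(iv) of Definition~\ref{def:palettes1} enter. The cyclic move that rotates which region is listed first corresponds to (i). A type~IV move pulling an over- or under-strand across a vertex $v$ replaces the tuple $(C(r_1)^{\varepsilon_1},\ldots,C(r_n)^{\varepsilon_n})\in P$ around $v$ by the tuple obtained by applying the appropriate composite of $\underline L^{\varepsilon},\overline L^{\varepsilon},\underline R^{\varepsilon},\overline R^{\varepsilon}$ maps to all but one entry, which is exactly the membership asserted in (iii); the consistency of the strand closing up after encircling $v$ is condition (ii). A type~V move that alters the routing of a single arc incident to $v$ past an adjacent crossing changes one entry of the tuple by an $\underline L^{\varepsilon_2}_{(a_2,a_3;\varepsilon_2)}$-type map (or one of its three siblings), and possibly its orientation sign, which is precisely the content of (iv), with the $n=2$ case covering the corresponding small-vertex configuration. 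In each case one reads off from the picture (FIGURE~\ref{coloring5} and the analogues of FIGURES~\ref{coloring8}--\ref{coloring9}) which of the four maps $\underline L,\overline L,\underline R,\overline R$ governs each region — left/right of the strand, over/under — using Remark~\ref{lem:crossing} to reconcile the two labelings of a shared region, and then the matching palette axiom guarantees the new tuple still lies in $P$.

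The main obstacle I anticipate is bookkeeping rather than conceptual difficulty: one must fix consistent conventions (clockwise ordering of regions, the sign $\varepsilon_i$ according to whether the separating arc $x_i$ points toward $v$, and the "right side of over- and under-arc" convention for $r_1$ at a crossing) and then check that, for every variant of every move — all choices of crossing signs, all orientations of the participating strands, and the strand passing on either side of the vertex — the induced change in the region tuple is one of the transformations built into axioms (i)--(iv) (or its inverse; note the Remark after Definition~\ref{def:Requiv} on inverses, and that axioms (ii)--(iv) are closed under inverses by the same invertibility). Because the axioms were, as the text says, "obtained from the oriented Reidemeister moves," each case does match by design; the work is to enumerate the cases and confirm the map identifications, and it suffices to do this for the moves of type IV and V explicitly and remark that the crossing-only moves reduce to the link case already recorded in Proposition~\ref{prop:coloring}.
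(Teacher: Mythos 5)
Your proposal is correct and follows essentially the same route as the paper: define $C'$ to agree with $C$ outside the disk where the move occurs, observe that the interior region colors are forced, and verify the vertex condition for the type IV and V moves via axioms (ii)--(iv) of Definition~\ref{def:palettes1} (the paper writes out only the type IV case using axiom (iii) and refers to the figures for the rest). Your identification of which palette axiom handles which move, and the reduction of the crossing-only moves to the ternary-quasigroup axioms, matches the paper's intent.
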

\begin{proof}
Let $D$ and $D'$ be diagrams such that $D'$ is obtained from $D$ by a single Reidemeister move. 
Let $E$ be a $2$-disk in which the move is applied. 
Let $C$ be an $(X,P)$-coloring of $D$. 
We define an $(X,P)$-coloring $C'$ of $D'$, corresponding to $C$, by $C'(r) = C(r)$ for a
region $r$ appearing in the outside of $E$. Then the colors of the regions appearing in $E$,
by $C'$, are uniquely determined, see Fig.~\ref{R4} and \ref{R5}. For example, in the upper move in Fig.~\ref{R4}, since 
\begin{align*}
&\Big(x, \overline R^{-1}_{(a_2,a_1)}(x), \ldots ,\overline R_{(a_{n-1},a_n)} \circ \cdots \circ \overline R_{(a_2,a_3)} \circ \overline R^{-1}_{(a_2,a_1)}(x)\Big)\\
&=\Big(x, \overline R^{-1}_{(a_1,a_2; -1)}(x), \ldots ,\overline R^{+1}_{(a_{n-1},a_n; +1)} \circ \cdots \circ \overline R^{+1}_{(a_2,a_3; +1)} \circ \overline R^{-1}_{(a_1,a_2; -1)}(x)\Big) \in P
\end{align*}
for any $(a_1^{\varepsilon_1}, a_2^{\varepsilon_2}, \ldots, a_n^{\varepsilon_n}) \in P$ and $x \in X$, by (iii) of Definition ~\ref{def:palettes1}, if the vertex in the left picture satisfies the coloring condition, so does the vertex in the right picture.

 Thus we have a bijection ${\rm Col}_{(X, P)}(D) \to {\rm Col}_{(X, P)}(D') $ that maps $C$ to $C'$.
\end{proof}

\begin{figure}[h]
  \begin{center}
    \includegraphics[clip,width=12.0cm]{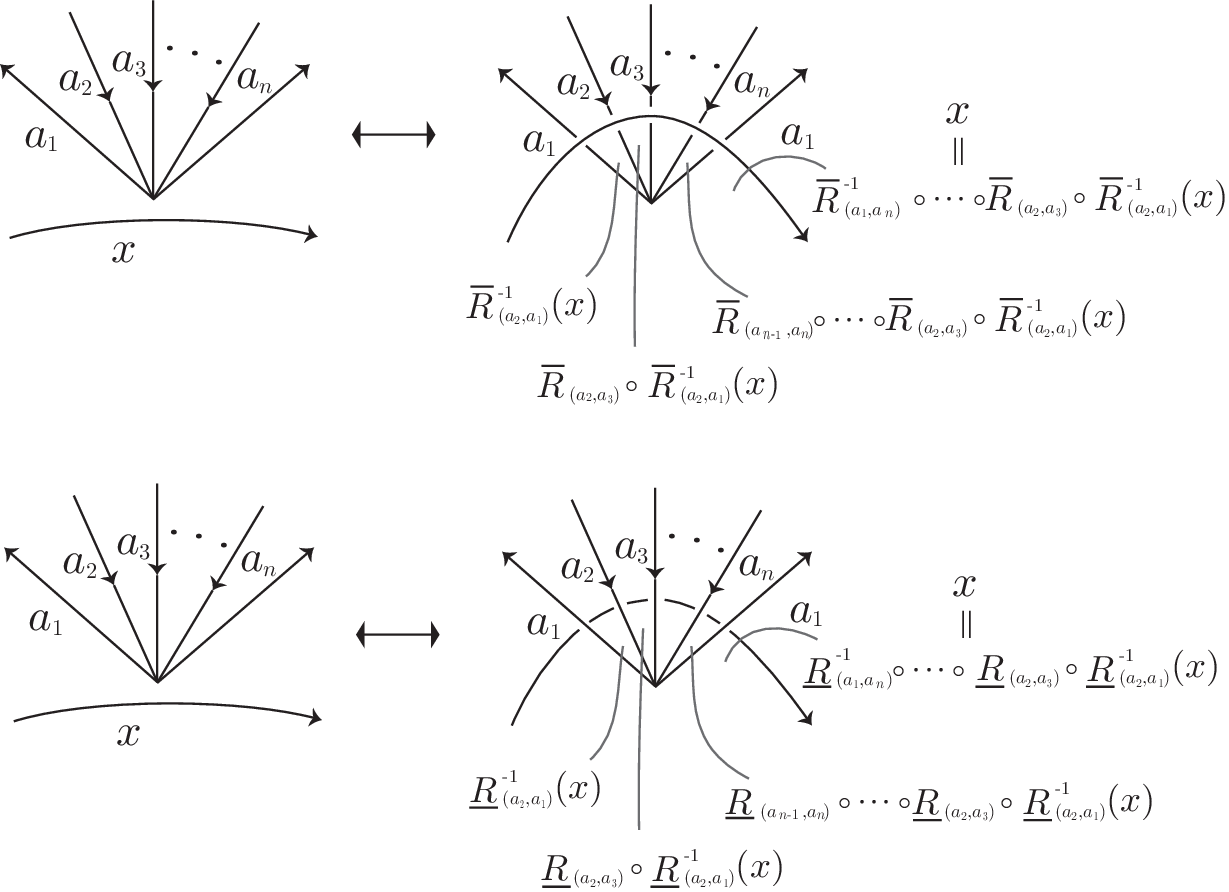}
    \caption{The correspondence of $(X, P)$-colorings under $R$IV}
    \label{R4}
  \end{center}
\end{figure}
\begin{figure}[h]
  \begin{center}
    \includegraphics[clip,width=8.0cm]{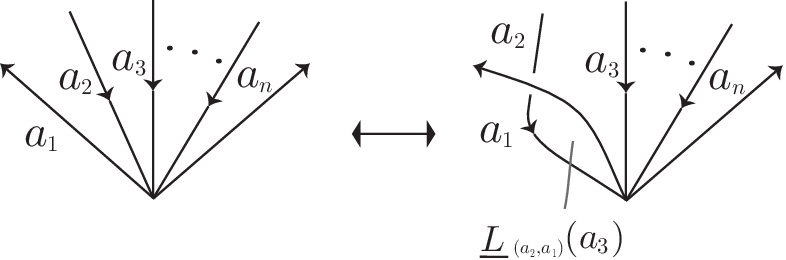}
    \caption{The correspondence of $(X, P)$-colorings under $R$V}
    \label{R5}
  \end{center}
\end{figure}

\section{Generalized $\mathcal{R}$-palettes for region colorings of unoriented spatial graph diagrams}\label{generalunoriented}

The notion of $\mathcal{R}$-palettes for Dehn $p$-colorings can be extended for knot-theoretic ternary-quasigroups and  region colorings of ``unoriented" spatial graph diagrams in general, which can be also regarded as a simplification of the notion shown in the previous section. 
In this section, we will show this generalization.

\begin{definition}
Let $(X, [\,])$ be a knot-theoretic ternary-quasigroup.
The ternary operation $[\,]$ is said to be {\it unoriented} if it satisfies that 
\begin{enumerate}
\item[] \hspace{-0.5cm}($\mathcal{KTQ}$3) For any $a,b,c\in X$, 
\[
\begin{array}{l}
[b,a,[a,b,c]]=c \mbox{ \ \ and \ \  } [c,[a,b,c], a] =b.
\end{array} 
\]
\end{enumerate} 
\end{definition}
\noindent A knot-theoretic ternary-quasigroup  $(X, [\,])$ with an unoriented $[\,]$ gives a region coloring of an unoriented link diagram by $(X, [\,])$, that is, an assignment of an element of $X$ to each region satisfying the crossing condition depicted in Fig.~\ref{coloring1}. 
Note that this condition does not depend on the choice of the specified region labeled by $\star$, see Fig.~\ref{coloring6}.
\begin{figure}[h]
  \begin{center}
    \includegraphics[clip,width=7.0cm]{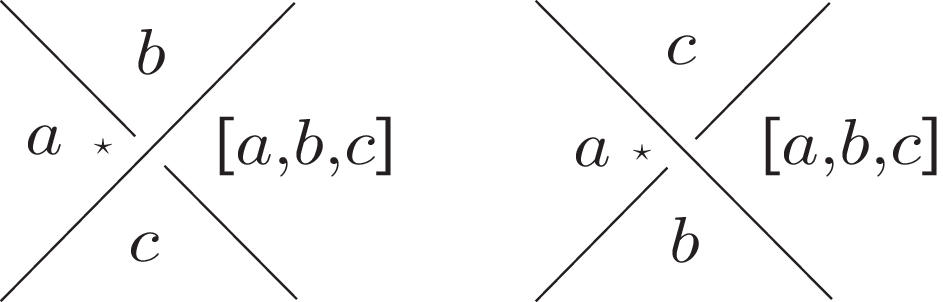}
    \caption{The crossing condition for a region coloring by $(X,[\,])$}
    \label{coloring1}
  \end{center}
\end{figure}
\begin{figure}[h]
  \begin{center}
    \includegraphics[clip,width=10.0cm]{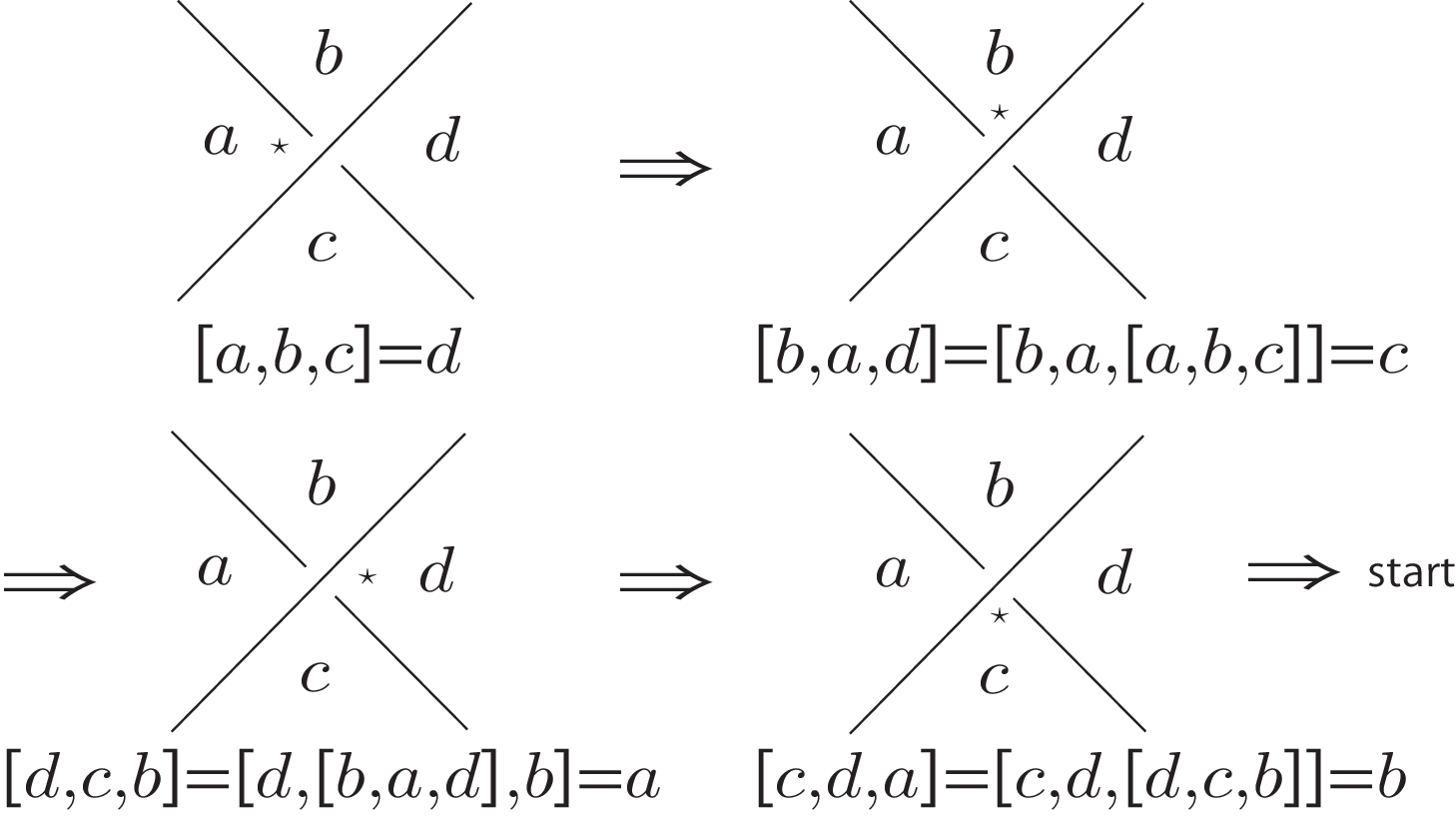}
    \caption{The crossing condition is independent of the choice of the specified region}
    \label{coloring6}
  \end{center}
\end{figure}

Let $(X, [\,])$ be a knot-theoretic ternary-quasigroup with an unoriented $[\,]$.
Note again that $H_{(a,b;i)}:X \to X$ with $i \in \{2,3\}$ is defined by $$H_{(a, b;2)}(c)=[a,c,b] \mbox{ and } \ H_{(a, b;3)}(c)=[a,b,c]$$ for $a,b,c\in X$. 
\begin{definition}\label{def:palettes2}
An {\it unoriented $\mathcal{R}$-palette} $P$ of $(X, [\, ] )$  is a subset of $\displaystyle 
\bigcup_{n \in \mathbb{Z}_{+}} {X}^n$ satisfying the following conditions:

(i) \ If $(a_1, a_2, \ldots, a_n) \in P$, then
$(a_2, \ldots, a_n, a_1) \in P$.

(ii) \ If $(a_1, a_2, \ldots, a_n) \in P$, then it holds that

$$H_{(a_n,a_1; 2)} \circ H_{(a_{n-1},a_n; 2)} \circ \cdots \circ H_{(a_1,a_2; 2)}={\rm id},$$

$$H_{(a_n,a_1; 3)} \circ H_{(a_{n-1},a_n; 3)} \circ \cdots \circ H_{(a_1,a_2; 3)}={\rm id}.$$

(iii) \ If $(a_1, a_2, \ldots, a_n) \in P$, then it holds that
$$\Big(a, H_{(a_1,a_2; 2)}(a),  H_{(a_2,a_3; 2)} \circ  H_{(a_1,a_2; 2)}(a), \ldots , H_{(a_{n-1},a_n; 2)} \circ \cdots \circ  H_{(a_1,a_2; 2)}(a)\Big) \in P,$$
$$\Big(a, H_{(a_1,a_2; 3)}(a),  H_{(a_2,a_3; 3)} \circ  H_{(a_1,a_2; 3)}(a), \ldots ,  H_{(a_{n-1},a_n; 3)} \circ \cdots \circ  H_{(a_1,a_2; 3)}(a)\Big) \in P$$
for any $a \in X$. 

(iv) \ If $(a_1, a_2, \ldots, a_n) \in P$, when $n>2$, 
$$(a_1, [a_2,a_3,a_1], a_3, \ldots, a_n) \in P,$$ 
$$(a_1, [a_2,a_1,a_3], a_3, \ldots, a_n) \in P$$ 
hold, and when $n=2$, 
$$(a_1, [a_2,a_1,a_1]) \in P$$
holds.
\end{definition}

The axioms of an unoriented $\mathcal{R}$-palette $P$ of $(X,[\,])$ are obtained from the unoriented Reidemeister moves of spatial graph diagrams, which are observed when we consider $(X,P)$-colorings for regions of spatial graph diagrams as follows:  
\begin{definition}\label{def:coloring1}
Let $(X,[\,])$ be a knot-theoretic ternary-quasigroup with an unoriented $[\,]$ and $P$ be an unoriented $\mathcal{R}$-palette of $(X,[\,])$.
Let $D$ be a diagram of an unoriented spatial graph and $\mathcal{R}(D)$ the set of regions of $D$.
An {\it $(X,P)$-coloring} of $D$ is a map $C: \mathcal{R}(D) \to X$ satisfying the following conditions: 
\begin{itemize}
\item For a crossing $c$ with regions $r_1, r_2, r_3$, and $r_4$ as depicted in Fig.~\ref{coloring7},
\[
\Big[C(r_1), C(r_2), C(r_3)\Big] = C(r_4)
\]
holds, where for an arbitrary chosen region $r_1$, $r_2$ is the region which is adjacent to $r_1$ by an under-arc and $r_3$ is the region which is adjacent to $r_1$ by the over-arc.
 See also Fig.~\ref{coloring1}.
\item For a vertex $v$ with regions $r_1, \ldots , r_n$ that appear clockwisely as depicted in Fig.~\ref{coloring7}, 
\[
\Big(C(r_1), C(r_2), \ldots , C(r_n)\Big) \in P
\]
holds. 
\end{itemize}
We call $C(r)$ the {\it color} of a region $r$.
We denote by ${\rm Col}_{(X,P)}(D)$ the set of $(X,P)$-colorings of $D$.
\end{definition}

\begin{proposition}
Let $D$ and $D'$ be diagrams of unoriented spatial graphs. If $D$ and $D'$ represent the same spatial graph, then there exists a bijection between ${\rm Col}_{(X,P)}(D)$ and ${\rm Col}_{(X,P)}(D')$.  
\end{proposition}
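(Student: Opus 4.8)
The plan is to follow, essentially verbatim, the proofs of Proposition~\ref{prop:coloring} and Proposition~\ref{prop:coloring1}. It suffices to assume that $D'$ is obtained from $D$ by a single generalized Reidemeister move supported in a $2$-disk $E$, and to show that every $(X,P)$-coloring $C$ of $D$ extends uniquely to an $(X,P)$-coloring $C'$ of $D'$ with $C'(r)=C(r)$ for every region $r$ meeting the complement of $E$. Applying the same construction to the inverse move then yields the inverse correspondence, so $C\mapsto C'$ is a bijection ${\rm Col}_{(X,P)}(D)\to{\rm Col}_{(X,P)}(D')$, and Proposition~\ref{prop:coloring}'s argument gives the invariance of $\#\,{\rm Col}_{(X,P)}(D)$.

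First I would dispose of the moves involving only crossings. By ($\mathcal{KTQ}$1) the crossing condition of an $(X,P)$-coloring determines the color of the fourth region around a crossing uniquely from the other three, so the colors inside $E$ are forced; ($\mathcal{KTQ}$1) then yields invariance under $R{\rm I}$ and $R{\rm II}$, ($\mathcal{KTQ}$2) yields invariance under $R{\rm III}$, and ($\mathcal{KTQ}$3) together with the observation recorded in FIGURE~\ref{coloring6} guarantees that the crossing condition does not depend on the choice of specified region. This is precisely the unoriented region-coloring theory of a knot-theoretic ternary-quasigroup with unoriented $[\,]$ from which the axioms were extracted, so nothing new is needed here. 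The substance lies in the two vertex-involving moves. When a strand is pushed across a vertex ($R{\rm IV}$), reading off the colors of the new regions via the crossing condition transforms the tuple $(C(r_1),\dots,C(r_n))$ at that vertex into one of the tuples $\bigl(a,\,H_{(a_1,a_2;j)}(a),\,H_{(a_2,a_3;j)}\circ H_{(a_1,a_2;j)}(a),\,\dots\bigr)$ appearing in clause (iii) of Definition~\ref{def:palettes2}, with the index $j\in\{2,3\}$ determined by whether the migrating strand passes under or over the vertex; hence membership in $P$ is preserved, and clause (ii) is used to check that the cyclic loop of regions around the vertex closes up consistently (and to handle the configuration in which the strand enters and leaves through the same pair of regions). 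When a vertex is slid past a strand ($R{\rm V}$), the new vertex tuple differs from the old one in a single entry, which becomes $[a_2,a_3,a_1]$ or $[a_2,a_1,a_3]$ (respectively $[a_2,a_1,a_1]$ in the degenerate $n=2$ case) --- exactly clause (iv); clause (i) absorbs the freedom in which of the regions around the vertex is listed first. The remaining crossing conditions inside $E$ for these two moves again follow from ($\mathcal{KTQ}$1)--($\mathcal{KTQ}$3).

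The main obstacle is bookkeeping rather than conceptual content: each vertex-involving move breaks into several sub-cases --- the migrating strand over versus under, the relative positions of the strands, and the $n=2$ boundary case of clause (iv) --- and in each one must verify that the region colors forced by the crossing condition coincide with the specific composite of $H_{(a,b;2)}(c)=[a,c,b]$ and $H_{(a,b;3)}(c)=[a,b,c]$ named in clauses (ii)--(iv). Because $[\,]$ is unoriented, the identities in ($\mathcal{KTQ}$3) are exactly what make these verifications independent of any choice of orientation, so each one reduces to substituting the definitions and simplifying. Once every move has been checked, the argument of Proposition~\ref{prop:coloring} shows that $\#\,{\rm Col}_{(X,P)}(D)$ depends only on the underlying unoriented spatial graph, which completes the proof.
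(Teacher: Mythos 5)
Your proposal is correct and takes essentially the same approach as the paper: the paper's proof is simply the one-line remark that the statement ``is proven similarly as the proof of Proposition~\ref{prop:coloring1},'' i.e., reduce to a single generalized Reidemeister move in a disk, let the crossing condition force the interior colors, and match the resulting vertex tuples against clauses (i)--(iv) of Definition~\ref{def:palettes2}. You have merely written out in detail the argument the paper leaves implicit.
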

\begin{proof}
This is proven similarly as the proof of Proposition~\ref{prop:coloring1}.
\end{proof}

\begin{remark}
In this paper, we study $\mathcal{R}$-palettes for Dehn $p$-colorings, which coincide with $\mathcal{R}$-palettes for
the ternary-quasigroup $(\mathbb Z_p, [\,])$ with the operation $[\,]$ defined by $[a,b,c]=a-b+c$. 
Since $[b,a,[a,b,c]]=b-a+(a-b+c) = c$ and $[c,[a,b,c], a] =c-(a-b+c)+a=b$ for $a,b,c \in \mathbb Z_p$, the ternary operation $[\,]$ is unoriented. 

For this case, 
\begin{align*}
&\mbox{the axiom (ii) of Definition~\ref{def:palettes2} holds}\\ 
&\iff H_{(a_n,a_1; 2)} \circ H_{(a_{n-1},a_n; 2)} \circ \cdots \circ H_{(a_1,a_2; 2)}(a)=a \mbox{ for all $a\in \mathbb Z_p$}
\\
&\iff \left\{
\begin{array}{lll}
2a_1-a=a & \mbox{ for all $a\in \mathbb Z_p$ if $n$ is odd}
,\\
a=a & \mbox{ for all  $a\in \mathbb Z_p$ if $n$ is even.} 
\end{array}
\right. 
\end{align*}
Since there does not exist $(a_1, \ldots , a_n)\in \mathbb Z_p^n$ such that $2a_1-a=a$ for any $a\in \mathbb Z_p$, the axiom (ii) of Definition~\ref{def:palettes2} implies that any palette of the ternary-quasigroup $(\mathbb Z_p, [\,])$ is a subset of $\bigcup_{n\in 2\mathbb{Z}_+} \mathbb{Z}_p^n$.

For the axiom (iii) of Definition~\ref{def:palettes2},
$$\Big(a, H_{(a_1,a_2; 2)}(a),  H_{(a_2,a_3; 2)} \circ  H_{(a_1,a_2; 2)}(a), \ldots , H_{(a_{n-1},a_n; 2)} \circ \cdots \circ  H_{(a_1,a_2; 2)}(a)\Big) \in P$$
means 
$$\Big(a, a_2 + (-1)^2(a_1-a), \ldots ,a_i + (-1)^i(a_1-a), \ldots , a_n + (-1)^n(a_1-a) \Big) \in P,$$
and 
$$\Big(a, H_{(a_1,a_2; 3)}(a),  H_{(a_2,a_3; 3)} \circ  H_{(a_1,a_2; 3)}(a), \ldots ,  H_{(a_{n-1},a_n; 3)} \circ \cdots \circ  H_{(a_1,a_2; 3)}(a)\Big) \in P$$
means 
$$\Big(a, a_1- a_2 +a, \ldots ,a_1- a_i +a, \ldots , a_1- a_n +a \Big) \in P.$$

For the axiom (iv) of Definition~\ref{def:palettes2}, when $n>2$, 
$$(a_1, [a_2,a_1,a_3], a_3, \ldots, a_n) \in P$$ 
means 
$$
(a_1, -a_1 + a_2 + a_3, a_3, \ldots, a_n) \in P.
$$
By repeating this operation
$(a_1, a_2, \ldots , a_n) \in P \Longrightarrow (a_1, [a_2,a_1,a_3], a_3, \ldots, a_n) \in P$
$p-1$ times,
$$(a_1, a_2, \ldots , a_n) \in P \Longrightarrow (a_1, [a_2,a_3,a_1], a_3, \ldots, a_n) \in P$$ 
is obtained. Note that 
$(a_1, [a_2,a_1,a_1]) \in P$ means $(a_1, a_2) \in P$ when $n=2$.

By the above observation, the definition of an unoriented $\mathcal{R}$-palette for the ternary-quasigroup $(\mathbb Z_p, [\,])$ can be rewritten as Definition~\ref{def:DihedralRpalettes}, which is the simplified definition of an $\mathcal{R}$-palette for the ternary-quasigroup $(\mathbb Z_p, [\,])$ with $[a,b,c]=a-b+c$.  
\end{remark}


\begin{thebibliography}{000}



\bibitem{CarterSilverWilliams13}
J.~S.~Carter, D.~S.~Silver, and S.~G.~Williams,
\textit{Three dimensions of knot coloring},
Amer. Math. Monthly 121 (2014), no. 6, 506--514. 

\bibitem{Fox}
R.~Fox, \textit{Metacyclic invariants of knots and links},
Canad. J. Math. 22 (1970), 193--207. 

\bibitem{HararyKauffman}
F.~Harary and L.~H.~Kauffman, 
\textit{Knots and graphs I. Arc graphs and colorings}, 
Adv. in Appl. Math. {\bf 22} (3) (1999) 312--337.


\bibitem{IshiiYasuhara97}
Y.~Ishii and A.~Yasuhara,
\textit{Color invariant for spatial graphs},
J. Knot Theory Ramifications 6 (1997), no. 3, 319--325. 

\bibitem{MadausNewmanRussell17}
A.~Madaus, M.~Newman, and H.~M.~Russell,
\textit{Dehn coloring and the dimer model for knots},
J. Knot Theory Ramifications 26 (2017), no. 3, 1741008, 18 pp. 

\bibitem{Nelson}
S.~Nelson, K.~Oshiro, and N.~Oyamaguchi, \textit{Local Biquandles and Niebrzydowski's Tribracket Theory}, Topol. Appl. 258 (2019), 474--512.

\bibitem{Niebrzydowski0}
M.~Niebrzydowski,
\textit{On some ternary operations in knot theory},
Fund. Math. 225 (2014), no. 1, 259--276. 

\bibitem{Niebrzydowski1}
M.~Niebrzydowski,
\textit{Homology of ternary quasigroups yielding invariants of knots and knotted surfaces},
arXiv:1706.04307.

\bibitem{Niebrzydowski2}
M.~Niebrzydowski,
\textit{Ternary quasigroups in knot theory},
arXiv:1708.05330.

\bibitem{Oshiro12}
K.~Oshiro,
\textit{On palettes for Fox colorings of spatial graphs},
Topology Appl. 159 (2012), no. 4, 1092--1105. 

\bibitem{Oshiro18}
K.~Oshiro,
\textit{Shadow biquandles and local biquandles},
 Topology Appl. 271 (2020), 107041, 27 pp. 

\bibitem{OshiroOyamaguchi20}
K.~Oshiro and N.~Oyamaguchi,
\textit{Dehn colorings and vertex-weight invariants for spatial graphs},
arXiv:2003.02996.

\bibitem{Przytycki95}
J.~Przytycki,
\textit{$3$-coloring and other elementary invariants of knots}, 
Knot theory (Warsaw, 1995), 275--295,
Banach Center Publ. {\bf 42}, Polish Acad. Sci. Inst. Math., Warsaw, 1998. 

\bibitem{Satoh07}
S.~Satoh, 
\textit{A note on the shadow cocycle invariant of a knot with a base point},
J. Knot Theory Ramifications {\bf 16} (2007), no. 7, 959--967. 


\end{thebibliography}
\end{document}